\newcommand{\E}{\mathbb{E}}
\newcommand{\R}{\mathbb{R}}
\newcommand{\N}{\mathbb{N}}
\newcommand{\F}{\mathcal{F}}
\newcommand{\IA}{\mathcal{A}}
\newcommand{\IB}{\mathcal{B}}
\newcommand{\IL}{\Lambda}
\newcommand{\IY}{\mathcal{Y}}
\newcommand{\IP}{\mathcal{P}}
\newcommand{\XX}{\mathbb{X}}
\newcommand{\YY}{\mathbb{Y}}
\newcommand{\HH}{\mathcal{H}}
\newtheorem{theo}{Theorem}[section]
\newtheorem{rem}[theo]{Remark}
\newtheorem{propo}[theo]{Proposition}
\newtheorem{lemma}[theo]{Lemma}
\newtheorem{ass}{Assumption}
\begin{document}

\title{Uniform weak error estimates for an asymptotic preserving scheme applied to a class of slow-fast parabolic semilinear SPDEs}

\author{Charles-Edouard Br\'ehier}

\address{Univ Lyon, Université Claude Bernard Lyon 1, CNRS UMR 5208, Institut Camille Jordan, 43 blvd. du 11 novembre 1918, F-69622 Villeurbanne cedex, France}
\email{brehier@math.univ-lyon1.fr}

\keywords{Stochastic partial differential equations, asymptotic preserving schemes, Euler schemes, infinite dimensional Kolmogorov equations}
\subjclass{60H35;65C30;60H15}

\begin{abstract}
We study an asymptotic preserving scheme for the temporal discretization of a system of parabolic semilinear SPDEs with two time scales. Owing to the averaging principle, when the time scale separation $\epsilon$ vanishes, the slow component converges to the solution of a limiting evolution equation, which is captured when the time-step size $\Delta t$ vanishes by a limiting scheme. The objective of this work is to prove weak error estimates which are uniform with respect to $\epsilon$, in terms of $\Delta t$: the scheme satisfies a uniform accuracy property. This is a non trivial generalization of the recent article~\cite{BR} in an infinite dimensional framework. The fast component is discretized using the modified Euler scheme for SPDEs introduced in the recent work~\cite{B}. Proving the weak error estimates requires delicate analysis of the regularity properties of solutions of infinite dimensional Kolmogorov equations.
\end{abstract}

\maketitle

\section{Introduction}

Applied mathematicians need to face many challenges when they study multiscale stochastic systems, which appear in all fields of science and engineering, whether one is interested in theoretical understanding of the behavior of such systems, in their effective numerical approximation, or  in their applications for concrete models. We refer for instance to the monograph~\cite{PavliotisStuart} for a presentation of averaging and homogenization techniques applied to multiscale stochastic systems, and to~\cite{BerglundGentz,Kuehn} for a description of possible dynamical behaviors in such problems.

In this work, we study a class of systems of parabolic semilinear stochastic partial differential equations (SPDEs) of type
\begin{equation}\label{eq:SPDE-introfield}
\left\lbrace
\begin{aligned}
\partial_t\XX^\epsilon(t,\xi)&=\partial_\xi\bigl(a(\xi)\partial_\xi \XX^\epsilon(t,\xi)\bigr)+f(\XX^\epsilon(t,\xi),\YY^\epsilon(t,\xi))\\
\partial_t\YY^\epsilon(t,\xi)&=\frac{1}{\epsilon}\partial_\xi\bigl(a(\xi)\partial_\xi \YY^\epsilon(t,\xi)\bigr)+\sqrt{\frac{{2}}{{\epsilon}}}\dot{W}(t,\xi)
\end{aligned}
\right.
\end{equation}
where $t\in(0,\infty)$, $\xi\in(0,1)$, $\dot{W}$ is space-time white noise, and the mappings $a:[0,1]\to(0,\infty)$ and $f:\R^2\to\R$ are assumed to be sufficiently smooth. In addition, homogeneous Dirichlet boundary conditions are applied, and (deterministic) initial values $\XX^\epsilon(0,\cdot)=x_0^\epsilon(\cdot)$, $\YY^\epsilon(0,\cdot)=y_0^\epsilon(\cdot)$ are given. The time scale separation parameter is denoted by $\epsilon$.

Instead of considering the system~\eqref{eq:SPDE-introfield}, where the unknowns are random fields $\XX^\epsilon,\YY^\epsilon:[0,T]\times(0,1)\to \R$, in the sequel, we consider systems of stochastic evolution equations (SEEs) (see~\cite{DPZ}) of type
\begin{equation}\label{eq:SPDE-intro}
\left\lbrace
\begin{aligned}
d\XX^\epsilon(t)&=-\IL\XX^\epsilon(t)dt+F(\XX^\epsilon(t),\YY^\epsilon(t))dt\\
d\YY^\epsilon(t)&=-\frac{1}{\epsilon}\IL\YY^\epsilon(t)dt+\sqrt{\frac{{2}}{{\epsilon}}}dW(t),
\end{aligned}
\right.
\end{equation}
with initial values $\XX^\epsilon(0)=x_0^\epsilon$ and $\YY^\epsilon(0)=y_0^\epsilon$, where the unknowns $\XX^\epsilon,\YY^\epsilon:[0,T]\to H$ take values in an infinite dimensional Hilbert space $H$ (with $H=L^2(0,1)$ to consider the system~\eqref{eq:SPDE-introfield}). We refer to Section~\ref{sec:assumptions} below for precise assumptions on the linear operator $\IL$ and the nonlinearity $F$. The second component in the system~\eqref{eq:SPDE-intro} is driven by a cylindrical Wiener process.

When the time scale separation parameter $\epsilon$ vanishes, the slow component $\XX^\epsilon$ converges (in a suitable sense, under appropriate conditions) to the solution $\overline{\XX}$ of a deterministic evolution equation
\begin{equation}\label{eq:averaged-intro}
d\overline{\XX}(t)=-\IL\overline{\XX}(t)dt+\overline{F}(\overline{\XX}(t)),
\end{equation}
with initial value $\overline{\XX}(0)=x_0=\underset{\epsilon\to 0}\lim~x_0^\epsilon$, where the effect of the fast component $\YY^\epsilon$ is averaged out:
\[
\overline{F}(x)=\int F(x,y)d\nu(y)
\]
where $\nu$ is a Gaussian distribution. This result, known as the averaging principle, has been proved for SPDE systems~\eqref{eq:SPDE-intro} for the first time in~\cite{MR2480788}. We also refer to~\cite{MR2537194,MR2854919} for similar results, and to~\cite{B:2012,B:2020} for results on the rate of convergence when $\epsilon\to 0$ (in strong and weak senses). This list of references on the averaging principle for SPDE systems is not exhaustive. The system~\eqref{eq:SPDE-intro} considered in this work has a simplified structure compared with the systems treated in the literature, which is crucial in the analysis performed in this article. First, the evolution of the fast component $\YY^\epsilon$ does not depend on the slow component $\XX^\epsilon$: one can write $\YY^\epsilon(t)=\YY(t/\epsilon)$ (equality being understood in distribution), where $\YY$ is solution of a stochastic evolution equation which does not depend on $\epsilon$. Second, $\YY^\epsilon$ is an infinite dimensional Ornstein--Uhlenbeck process, in particular $\YY^\epsilon(t)$ is an $H$-valued Gaussian random variable for all $t\ge 0$. Note that the second condition is crucial for the arguments described below, however the first condition may be relaxed by introducing coefficients depending on the slow component in the evolution of the fast component. This generalization would require extra technical arguments in the analysis and in the proof of the error estimates, and is left for future work.

The objective of this article is to introduce and study an effective numerical scheme which allows to approximate the slow component $\XX^\epsilon$ in regimes where the time-scale separation parameter $\epsilon$ either vanishes, or has a fixed value. We only focus on the temporal discretization, even if in practice the approximation of solutions of SPDEs also needs a spatial discretization procedure (for instance using finite differences). Since the fast component $\YY^\epsilon$ evolves at the time scale $t/\epsilon$, a careful construction is required to be able to choose a time-step size $\Delta t$ which is independent of $\epsilon$. If one is interested only in the regime where $\epsilon$ vanishes, a popular method is the Heterogeneous Multiscale Method (HMM): see~\cite{MR2916381} for a general overview of this method, \cite{MR2165382} for its description for the approximation of multiscale stochastic differential equations, and~\cite{B:2012,B:2020,MR2881027} for its analysis and application to multiscale SPDE systems. The idea of HMM is to discretize slow and fast components using coarse and fine integrators respectively, depending on different time-step sizes. In addition, in HMM the coarse discretization of the slow component is inspired by the averaging principle, where the unknown averaged nonlinearity is approximated using the fine scheme. As a result, the HMM scheme is efficient when $\epsilon$ is small, but not in the regime where the time scale separation parameter does not vanish. In this article, we are interested in a different methodology, which allows to cover all regimes by a single numerical scheme, and where the time-step size $\Delta t$ can be chosen independently of $\epsilon$.

We propose to discretize the system~\eqref{eq:SPDE-intro} by the following numerical scheme
\begin{equation}\label{eq:scheme-intro}
\left\lbrace
\begin{aligned}
\XX_{n+1}^{\epsilon,\Delta t}&=\IA_{\Delta t}\bigl(\XX_n^{\epsilon,\Delta t}+\Delta t F(\XX_n^{\epsilon,\Delta t},\YY_{n+1}^{\epsilon,\Delta t})\bigr)\\
\YY_{n+1}^{\epsilon,\Delta t}&=\IA_{\frac{\Delta t}{\epsilon}}\YY_n^{\epsilon,\Delta t}+\sqrt{\frac{2\Delta t}{\epsilon}}\IB_{\frac{\Delta t}{\epsilon},1}\Gamma_{n,1}+\sqrt{\frac{2\Delta t}{\epsilon}}\IB_{\frac{\Delta t}{\epsilon},2}\Gamma_{n,2},
\end{aligned}
\right.
\end{equation}
where $\IA_{\Delta t}=(I+\Delta t\IL)^{-1}$, the linear operators $\IB_{\Delta t/\epsilon,1}$ and $\IB_{\Delta t/\epsilon,2}$ are chosen to satisfy~\eqref{eq:operators}, and $\Gamma_{n,1},\Gamma_{n,2}$ are independent cylindrical Gaussian random variables. We refer to Section~\ref{sec:scheme} for details on the construction of the scheme~\eqref{eq:scheme-intro}. On the one hand, the slow component $\XX^\epsilon$ is discretized using a semi-implicit Euler scheme. On the other hand, the fast component $\XX^\epsilon$ is discretized using the modified Euler scheme for parabolic semilinear SPDEs introduced in the recent work~\cite{B}. The construction of the scheme allows us to check that for any fixed value of $\epsilon$, $\XX_N^{\epsilon,\Delta t}$ converges to $\XX^\epsilon(T)$ (with $T=N\Delta t$) when $\Delta t\to 0$, and that the scheme is asymptotic preserving, in the following sense. First, for any value of the time-step size $\Delta t$ and all integers $n\ge 1$, one has $\XX_n^{\epsilon,\Delta t}\to \XX_n^{\Delta t}$ when $\epsilon\to 0$, determined by the limiting scheme
\begin{equation}\label{eq:limitingscheme-intro}
\XX_{n+1}^{\Delta t}=\IA_{\Delta t}\XX_n^{\Delta t}+\Delta t\IA_{\Delta t}F(\XX_n^{\Delta t},\IL^{-\frac12}\Gamma_{n})
\end{equation}
with initial value $\XX_0^{\Delta t}=x_0=\underset{\epsilon\to 0}\lim~\XX_0^{\epsilon,\Delta t}$. The last but not the least, the limiting scheme~\eqref{eq:limitingscheme-intro} is consistent with the limiting evolution equation~\eqref{eq:averaged-intro}: one has $\XX_N^{\Delta t}\to \overline{\XX}(T)$ when $\Delta t=T/N\to 0$. Note that the choice of the modified Euler scheme to discretize the fast component in the scheme~\eqref{eq:scheme-intro} is essential to obtain the last property: it is not satisfied when the standard Euler scheme is used. We refer to Section~\ref{sec:main} for rigorous statements of the properties above, in particular about the need to consider convergence in distribution.

The asymptotic preserving property is written as the fact that the diagram
\[
\begin{CD}
\XX_N^{\epsilon,\Delta t}     @>{N \to \infty}>> \XX^\epsilon(T) \\
@VV{\epsilon\to 0}V        @VV{\epsilon\to 0}V\\
\XX_N^{\Delta t}     @>{N \to \infty}>> \overline{\XX}(T)
\end{CD}
\]
is commutative. Asymptotic preserving methods are popular in the field of numerical analysis of multiscale PDEs, see for instance the recent review~\cite{Jin} and the references therein. In recent years, they have been studied for stochastic systems, for instance in~\cite{AyiFaou,BR}.

The asymptotic preserving property for the scheme~\eqref{eq:scheme} is also proved in the manuscript~\cite{B} in which the modified Euler scheme (used here to discretize the fast component $\YY^\epsilon$) has been introduced and studied. In this article, we make a further major step in the analysis and prove a form of uniform accuracy property. The main result of this manuscript, Theorem~\ref{theo:UA}, can be written as follows: under appropriate regularity and growth conditions, one has the uniform weak error estimates
\begin{equation}\label{eq:UA-intro}
\underset{\epsilon\in(0,\epsilon_0)}\sup~\big|\E[\varphi(\XX_N^{\epsilon,\Delta t})]-\E[\varphi(\XX^\epsilon(T))]\big|\le C_\kappa(T,\varphi,x_0) \Delta t^{\frac13-\kappa}
\end{equation}
where $\kappa\in(0,\frac13)$ is an arbitrarily small positive auxiliary parameter, $\varphi:H\to\R$ is a mapping of class $\mathcal{C}^3$ with bounded derivatives, and $C_\kappa(T,\varphi,x_0)\in(0,\infty)$.

The order of convergence $1/3$ appearing in the right-hand side of the uniform weak error estimates~\eqref{eq:UA-intro} may not be optimal. Indeed, for a fixed value of $\epsilon$, the order of convergence of the scheme~\eqref{eq:scheme-intro} is at least $1/2$ (see Proposition~\ref{propo:error_fixedepsilon}), and for the limiting scheme the order of convergence is $1$ (see Proposition~\ref{propo:error_limitingscheme-averagedequation}). The reduction of the order of convergence is due to the strategy of the proof, which consists in obtaining two different error estimates based on the commutative diagram above. Obtaining a positive order of convergence is already a non trivial challenge which is solved in this manuscript for the first time in the context of stochastic PDEs. In order to prove the uniform weak error estimate~\eqref{eq:UA-intro}, we follow the same strategy as in~\cite{BR} (where a reduction of the order of convergence is also obtained), which deals with finite dimensional stochastic differential equations. Substantial modifications due to the infinite dimensional setting are required. Precisely, the main difficulties appear for the proof of a direct error estimate for the weak error, see Proposition~\ref{propo:error_fixedepsilon}. Compared with~\cite{BR}, additional arguments concerning the regularity properties of the solutions of the associated Kolmogorov equation need to be studied carefully, see Lemma~\ref{lem:uepsilon} and Section~\ref{sec:proof-UA2}.

The manuscript is organized as follows. First, the setting is described in Section~\ref{sec:setting}. Preliminary results on the SPDE system are recalled in Section~\ref{sec:SPDE} and the averaging principle is discussed in Section~\ref{sec:averaging}. The numerical scheme  studied in this work is presented in Section~\ref{sec:scheme}. Then the main results of this work are stated in Section~\ref{sec:main}: the asymptotic preserving property is studied in Section~\ref{sec:AP} and the main result, Theorem~\ref{theo:UA}, is stated in Section~\ref{sec:UA}. The auxiliary error estimates required to prove Theorem~\ref{theo:UA} are stated in Section~\ref{sec:auxiliary-error}. Section~\ref{sec:Kolmogorov} provides the regularity properties of solutions of infinite dimensional Kolmogorov equations, see Lemma~\ref{lem:uepsilon} and Lemma~\ref{lem:ubarDeltat}.  The reminder of the manuscript is devoted to proving the error estimates, in Section~\ref{sec:proof-AP} (results from Section~\ref{sec:AP}) and Section~\ref{sec:proof-UA} (results from Section~\ref{sec:auxiliary-error}).

\section{Setting}\label{sec:setting}

\subsection{Notation}\label{sec:notation}

The set of integers is denoted by $\N=\{1,\ldots\}$.

Let $H$ be a separable infinite dimensional Hilbert space, equipped with inner product and norm denoted by $\langle\cdot,\cdot\rangle$ respectively. Let also $\HH=H^2$ be the Hilbert space, with inner product and norm as follows: for all $(x_1,y_1),(x_2,y_2),(x,y)\in\HH$, set
\begin{align*}
\langle (x_1,y_1),(x_2,y_2)\rangle_{\HH}&=\langle x_1,x_2\rangle+\langle y_1,y_2\rangle\\
\|(x,y)\|_{\HH}^2&=\|x\|^2+\|y\|^2.
\end{align*}
The set of bounded linear operators from $H$ to $H$ is denoted by $\mathcal{L}(H)$, this set is a Banach space, with the norm $\|\cdot\|_{\mathcal{L}(H)}$ defined by
\[
\|L\|_{\mathcal{L}(H)}=\underset{x\in H\subset\{0\}}\sup~\frac{|Lx|}{|x|}.
\]

The random variables and the stochastic processes considered in this article are defined on a probability space denoted by $(\Omega,\mathcal{F},\mathbb{P})$. This probability space is equipped with a filtration $\bigl(\mathcal{F}_t\bigr)_{t\ge 0}$ which is assumed to satisfy the usual conditions. The expectation operator is denoted by $\E[\cdot]$.

Let $\bigl(\beta_j\bigr)_{j\in\N}$ denote a sequence of independent standard real-valued Wiener processes, adapted to the filtration $\bigl(\mathcal{F}_t\bigr)_{t\ge 0}$. The cylindrical Wiener process $\bigl(W(t)\bigr)_{t\ge 0}$ on $H$ is formally defined as
\begin{equation}\label{eq:Wiener}
W(t)=\sum_{j\in\N}\beta_j(t)e_j
\end{equation}
where $\bigl(e_j\bigr)_{j\in\N}$ is an arbitrary complete orthonormal system of $H$.

The following terminology is used in the sequel: a random variable $\Gamma$ is called a cylindrical Gaussian random variable if
\[
\Gamma=\sum_{j\in\N}\gamma_j e_j,
\]
where $\bigl(\gamma_j\bigr)_{j\in\N}$ is a sequence of independent standard real-valued Gaussian random variables ($\gamma_j\sim\mathcal{N}(0,1)$ for all $j\in\N$).

Some of the proofs below require tools from Malliavin calculus~\cite{Nualart}. We do not give precise definitions, instead let us state the notation used in this article and quote the most useful results. If $\Theta$ is an $H$-valued random variable, $\mathcal{D}_s^h\Theta\in H$ is the Malliavin derivative of $\Theta$ at time $s$ in direction $h\in H$. For instance, this means that
\[
\mathcal{D}_s^h\bigl(\int_0^TL(t)dW(t)\bigr)=L(s)h
\]
if $t\in[0,T]\mapsto L(t)\in\mathcal{L}(H)$ is an adapted process. In addition, if $\Theta$ is $\mathcal{F}_t$-measurable, then $\mathcal{D}_s^h\Theta=0$ for all $s>t$. The Malliavin derivative satisfies a chain rule property: if $\Phi:H\to H$ is of class $\mathcal{C}^1$ with bounded derivative, then for all $s\ge 0$ and $h\in H$ one has
\[
\mathcal{D}_s^h\phi(\Theta)=D\phi(\Theta).\mathcal{D}_s^h\Theta.
\]
The same type of notation and results are satisfied for $\R$-valued random variables $\theta$. Finally, one has the following integration by parts formula, which is essential for the proof of weak error estimates, see~\cite{Debussche:11}: if $\theta$ is $\R$-valued random variable and if $(t,s)\mapsto\phi(t,s)\in\R$ is a given deterministic function, for all $j\in\N$, one has
\begin{equation}\label{eq:MalliavinIBP}
\E\bigl[\theta\int_0^t\phi(t,s)d\beta_j(s)\bigr]=\int_0^t\E[\mathcal{D}_s^{e_j}\theta \phi(t,s)d\beta_j(s)].
\end{equation}

Finally, introduce the following notation. If $\varphi:H\to\R$ is a mapping of class $\mathcal{C}^{3}$ with bounded derivatives of order $1,2,3$, set
\begin{align*}
\vvvert\varphi\vvvert_1&=\underset{x,h\in H}\sup~\frac{|D\varphi(x).h|}{|h|},\\
\vvvert\varphi\vvvert_2&=\vvvert\varphi\vvvert_1+\underset{x,h_1,h_2\in H}\sup~\frac{|D^2\varphi(x).(h_1,h_2)|}{|h_1||h_2|},\\
\vvvert\varphi\vvvert_3&=\vvvert\varphi\vvvert_2+\underset{x,h_1,h_2,h_3\in H}\sup~\frac{|D^3\varphi(x).(h_1,h_2,h_3)|}{|h_1||h_2||h_3|}.
\end{align*}

If $\phi:\HH\to\R$ is a function of class $\mathcal{C}^2$, for all $h,k\in H$ and $(x,y)\in\HH$, the following notation is used below:
\begin{align*}
D_x\phi(x,y).h&=D\phi(x,y).(h,0)\\
D_y\phi(x,y).h&=D\phi(x,y).(0,h)\\
D_x^2\phi(x,y).(h,k)&=D^2\phi(x,y).\bigl((h,0),(k,0)\bigr)\\
D_y^2\phi(x,y).(h,k)&=D^2\phi(x,y).\bigl((0,h),(0,k)\bigr)\\
D_xD_y\phi(x,y).(h,k)&=D^2\phi(x,y).\bigl((h,0),(0,k)\bigr).
\end{align*}
Similar notation is used for third order derivatives below.

In this work, the values of constants $C\in(0,\infty)$ (which may depend on auxiliary parameters) appearing in the error estimates may vary from line to line. All the constants are independent of the time scale separation parameter $\epsilon$ and of the time-step size $\Delta t$.

\subsection{Assumptions}\label{sec:assumptions}

The stochastic evolution equations considered in this work are driven by an unbounded self-adjoint linear operator $-\IL:D(\IL)\subset H\to H$, which is assumed to satisfy the following conditions.

\begin{ass}\label{ass:L}
There exists a complete orthonormal system $\bigl(e_j\bigr)_{j\in\N}$ of $H$ and a non-decreasing sequence $\bigl(\lambda_j\bigr)_{j\in\N}$ of positive real numbers, such that
\[
\IL e_j=\lambda_je_j
\]
for all $j\in\N$. In addition, it is assumed that there exists $c_{\IL}\in(0,\infty)$ that $\lambda_j\sim c_{\IL}j^2$ when $j\to\infty$.
\end{ass}
The self-adjoint unbounded linear operator $-\IL$ generates a semigroup which is denoted by $\bigl(e^{-t\IL}\bigr)_{t\ge 0}$. Precisely, for all $t\ge 0$ and $x\in H$, set
\[
e^{-t\IL}x=\sum_{j\in\N}e^{-t\lambda_j}\langle x,e_j\rangle e_j.
\]
In addition, for all $\alpha\in[-1,1]$, define the self-adjoint linear operators $\IL^{\alpha}$ such that
\[
\IL^\alpha e_j=\lambda_j^\alpha e_j
\]
for all $j\in\N$. Equivalently,
\[
\IL^\alpha x=\sum_{j\in\N}\lambda_j^\alpha \langle x,e_j\rangle e_j.
\]
If $\alpha\in[-1,0]$, $\IL^\alpha$ is a bounded linear operator from $H$ to $H$ and the expression above is well-defined for all $x\in H$. For all $\alpha\in[0,1]$, introduce the notation
\[
|x|_\alpha=\bigl(\sum_{j\in\N}\lambda_j^{2\alpha}\langle x,e_j\rangle^2\bigr)^{\frac12}\in[0,\infty],
\]
then $\IL^\alpha$ is an unbounded self-adjoint linear operator with domain $D(\IL^\alpha)=H^\alpha$, defined by
\[
H^\alpha=\{x\in H;~|x|_\alpha<\infty\}.
\]
Finally, let us recall two standard inequalities: for all $\alpha\in[0,1]$, there exists $C_\alpha\in(0,\infty)$ such that for all $t\in(0,\infty)$ and all $x\in H$, one has
\begin{equation}\label{eq:smoothing}
|e^{-t\IL}x|\le C_\alpha t^{-\alpha}|\IL^{-\alpha}x|
\end{equation}
and for all $x\in H^\alpha$ one has
\begin{equation}\label{eq:regularity}
|\bigl(e^{-t\IL}-I\bigr)x|\le C_\alpha t^\alpha|x|_\alpha.
\end{equation}

Let us now state the regularity and growth assumptions for the nonlinear operator $F$.
\begin{ass}\label{ass:F}
Let $F:\HH\to H$ be a mapping of class $\mathcal{C}^3$, with bounded derivatives of order $1,2,3$.
\end{ass}
Note that in particular $F$ is globally Lipschitz continuous: there exists $C_F\in(0,\infty)$ such that for all $(x_1,y_1),(x_2,y_2)\in\HH$, one has
\[
|F(x_2,y_2)-F(x_1,y_1)|\le C_F\bigl(|x_2-x_1|+|y_2-y_1|\bigr).
\]

\begin{ass}\label{ass:init}
For all $\epsilon\in(0,\epsilon_0)$, let $x_0^\epsilon\in H$ and $y_0^\epsilon\in H$, satisfying the following conditions: there exists $x_0\in H$ such that 
\begin{equation}
|x_0^\epsilon-x_0|\underset{\epsilon\to 0}\to 0.
\end{equation}
Moreover, there exists $\kappa_0\in(0,1)$, such that for all $\kappa\in[0,\kappa_0]$, there exists $C_\kappa\in(0,\infty)$ such that $x_0\in H^{\frac{\kappa}{2}}$ and
\begin{equation}
\underset{\epsilon\in(0,\epsilon_0)}\sup~\bigl(|\IL^{\frac{\kappa}{2}}x_0^\epsilon|+|\IL^{\frac{\kappa}{2}}y_0^\epsilon|\bigr)\le C_\kappa\bigl(1+|\IL^{\frac{\kappa}{2}}x_0|\bigr).
\end{equation}
\end{ass}

\subsection{SPDE system}\label{sec:SPDE}

In this work, we study the behavior of a class of numerical methods applied to the following stochastic evolution equations system, both for fixed $\epsilon\in(0,\epsilon_0)$ and in the regime $\epsilon\to 0$:
\begin{equation}\label{eq:SPDE}
\left\lbrace
\begin{aligned}
d\XX^\epsilon(t)&=-\IL\XX^\epsilon(t)dt+F(\XX^\epsilon(t),\YY^\epsilon(t))dt\\
d\YY^\epsilon(t)&=-\frac{1}{\epsilon}\IL\YY^\epsilon(t)dt+\sqrt{\frac{{2}}{{\epsilon}}}dW(t),
\end{aligned}
\right.
\end{equation}
with initial values $\XX^\epsilon(0)=x_0^\epsilon$ and $\YY^\epsilon(0)=y_0^\epsilon$, which satisfy Assumption~\ref{ass:init}. In the system above, the linear operator $\IL$ satisfies Assumption~\ref{ass:L}, the nonlinear operator $F$ satisfies Assumption~\ref{ass:F} and the cylindrical Wiener process $\bigl(W(t)\bigr)_{t\ge 0}$ is given by~\eqref{eq:Wiener}.

The following result is a standard well-posedness statement for the system~\eqref{eq:SPDE}, see for instance~\cite{DPZ}.
\begin{propo}\label{propo:SPDE}
Let Assumptions~\ref{ass:L} and~\ref{ass:F} be satisfied. For all $\epsilon\in(0,\epsilon_0)$, the system~\eqref{eq:SPDE} admits a unique global mild solution $\bigl(\XX^\epsilon(t),\YY^\epsilon(t)\bigr)_{t\ge 0}$, with initial values $\XX^\epsilon(0)=x_0^\epsilon$ and $\YY^\epsilon(0)=y_0^\epsilon$: for all $t\ge 0$,
\begin{equation}\label{eq:mild}
\left\lbrace
\begin{aligned}
\XX^\epsilon(t)&=e^{-t\IL}x_0^\epsilon+\int_0^t e^{-(t-s)\IL}F(\XX^\epsilon(s),\YY^\epsilon(s))ds\\
\YY^\epsilon(t)&=e^{-\frac{t}{\epsilon}\IL}y_0^\epsilon+\sqrt{\frac{{2}}{{\epsilon}}}\int_0^te^{-\frac{t-s}{\epsilon}\IL}dW(s).
\end{aligned}
\right.
\end{equation}

If Assumption~\ref{ass:init} is satisfied, one has the following moment bounds properties, uniformly with respect to $\epsilon\in(0,\epsilon_0)$: for all $T\in(0,\infty)$,
\begin{equation}\label{eq:momentboundmild}
\underset{\epsilon\in(0,\epsilon_0)}\sup~\Bigl(\underset{0\le t\le T}\sup~\E[|\XX^\epsilon(t)|^2]+\underset{t\ge 0}\sup~\E[|\YY^\epsilon(t)|^2]\Bigr)<\infty.
\end{equation}
\end{propo}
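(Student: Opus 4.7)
The plan is to decouple the system and handle each component in turn, then combine everything. Since the fast equation does not depend on the slow component, I would first construct $\YY^\epsilon$ directly as the Ornstein--Uhlenbeck process prescribed by the second line of~\eqref{eq:mild}. The stochastic convolution is a well-defined $H$-valued Gaussian random variable for each $t$ thanks to Assumption~\ref{ass:L}: its covariance operator has trace
\begin{equation*}
\frac{2}{\epsilon}\int_0^t\sum_{j\in\N}e^{-2(t-s)\lambda_j/\epsilon}\,ds=\sum_{j\in\N}\frac{1}{\lambda_j}\bigl(1-e^{-2t\lambda_j/\epsilon}\bigr),
\end{equation*}
which is finite because $\lambda_j\sim c_{\IL}j^2$, hence $\sum_{j\in\N}\lambda_j^{-1}<\infty$. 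Having constructed $\YY^\epsilon$ globally in time, the first line of~\eqref{eq:mild} becomes a pathwise Volterra integral equation with a globally Lipschitz nonlinearity (Assumption~\ref{ass:F}) and the contractive semigroup $e^{-t\IL}$ acting as integral kernel. A standard Picard fixed-point argument on $\mathcal{C}([0,T],H)$, equipped with an exponentially weighted sup-norm $\|x\|_{T,\beta}=\underset{0\le t\le T}\sup~e^{-\beta t}|x(t)|$ for $\beta$ large enough, then provides existence and uniqueness of a continuous mild solution $\XX^\epsilon$ for arbitrary $T$.

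For the uniform moment bound on $\YY^\epsilon$, I would use the mild formula and the independence of the two summands to write
\begin{equation*}
\E[|\YY^\epsilon(t)|^2]=|e^{-\frac{t}{\epsilon}\IL}y_0^\epsilon|^2+\sum_{j\in\N}\frac{1}{\lambda_j}\bigl(1-e^{-2t\lambda_j/\epsilon}\bigr)\le |y_0^\epsilon|^2+\sum_{j\in\N}\lambda_j^{-1},
\end{equation*}
invoking the contractivity $\|e^{-s\IL}\|_{\mathcal{L}(H)}\le 1$ that follows from $\lambda_j>0$. The right-hand side is finite and independent of $t$ and $\epsilon$: the trace series converges by Assumption~\ref{ass:L}, and taking $\kappa=0$ in Assumption~\ref{ass:init} yields $\underset{\epsilon\in(0,\epsilon_0)}\sup~|y_0^\epsilon|<\infty$.

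For the slow component I would bound the mild formula by using the contractivity of the semigroup and the sublinear growth $|F(x,y)|\le |F(0,0)|+C_F(|x|+|y|)$ implied by Assumption~\ref{ass:F}. Squaring, applying Cauchy--Schwarz to the time integral, taking expectations and inserting the uniform bound on $\E[|\YY^\epsilon(s)|^2]$ yields
\begin{equation*}
\E[|\XX^\epsilon(t)|^2]\le C(T)\Bigl(1+|x_0^\epsilon|^2+\underset{s\ge 0}\sup~\E[|\YY^\epsilon(s)|^2]\Bigr)+C(T)\int_0^t\E[|\XX^\epsilon(s)|^2]\,ds
\end{equation*}
for $t\in[0,T]$, and Gronwall's lemma concludes. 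All constants are independent of $\epsilon$ thanks to Assumption~\ref{ass:init}, which also controls $|x_0^\epsilon|$ uniformly. I do not expect a genuine obstacle: the only place where the specific structure of the problem enters non-trivially is the trace computation for the stochastic convolution, which is precisely where the spectral rate $\lambda_j\sim c_{\IL}j^2$ is used. Otherwise this is a classical well-posedness result in the spirit of~\cite{DPZ}.
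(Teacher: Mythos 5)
Your argument is correct and is exactly the classical route the paper has in mind: the paper gives no proof of this proposition, citing it as standard (with reference to~\cite{DPZ}), and your decoupling of the system (explicit Gaussian construction of $\YY^\epsilon$ with the trace computation $\sum_{j}\lambda_j^{-1}<\infty$, then a Picard fixed point for $\XX^\epsilon$ and Gronwall for the moment bound, with Assumption~\ref{ass:init} at $\kappa=0$ giving uniformity in $\epsilon$) is precisely that standard argument. Nothing to correct.
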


\subsection{The averaging principle}\label{sec:averaging}

Let us describe the behavior of the solution of the system~\eqref{eq:SPDE} when $\epsilon\to 0$. Note that the parameter $\epsilon$ introduces a time-scale separation in the evolution of the two components. On the one hand, the fast component $\YY^\epsilon$ is an $H$-valued Ornstein--Uhlenbeck process, and one has the equality in distribution
\[
\bigl(\YY^\epsilon(t)\bigr)_{t\ge 0}=\bigl(e^{-\frac{t}{\epsilon}\IL}y_0^\epsilon+\YY(\frac{t}{\epsilon})\bigr)_{t\ge 0},
\]
where the Ornstein--Uhlenbeck process $\YY$ is the solution of the stochastic evolution equation
\[
d\YY(t)=-\IL\YY(t)dt+\sqrt{2}dW(t),
\]
with initial value $\YY(0)=0$. It is straightforward to check that the $H$-valued process $\YY$ is ergodic and that its unique invariant distribution is the Gaussian distribution $\nu=\mathcal{N}(0,\IL^{-1})$. In addition, for all $t\in(0,\infty)$, $\YY^\epsilon(t)$ converges in distribution to $\nu$ when $\epsilon\to 0$. On the other hand, the component $\XX^\epsilon$ evolves slowly, and the behavior of the fast component implies that an averaging principle holds: when $\epsilon\to 0$, $\XX^\epsilon$ converges (in various suitable senses) to the solution $\overline{\XX}$ of an evolution equation where the effect of the fast component has been averaged out, with a nonlinearity depending on the the Gaussian distribution $\nu$.

In order to state a rigorous version of the averaging principle, introduce the nonlinear operator $\overline{F}:H\to H$ defined as follows: for all $x\in H$, set
\begin{equation}\label{eq:Fbar}
\overline{F}(x)=\int F(x,y)d\nu(y)=\E_{\nu}[F(x,\YY)]=\E[F(x,\IL^{-\frac12}\Gamma)],
\end{equation}
where $\Gamma$ is a cylindrical Gaussian random variable. Observe that if $F$ satisfies Assumption~\ref{ass:F}, then the mapping $\overline{F}:H\to H$ is of class $\mathcal{C}^3$, with bounded derivatives of order $1,2,3$. In particular, $\overline{F}$ is globally Lipschitz continuous.

The asymptotic behavior of the slow component $\XX^\epsilon$ in~\eqref{eq:SPDE} is described by the solution of the averaged equation:
\begin{equation}\label{eq:averaged}
\frac{d\overline{\XX}(t)}{dt}=-\IL\overline{\XX}(t)+\overline{F}(\overline{\XX}(t)),
\end{equation}
with initial value $\overline{\XX}(0)=x_0=\underset{\epsilon\to 0}\lim~x_0^\epsilon$ (see Assumption~\ref{ass:init}). The deterministic evolution equation~\eqref{eq:averaged} admits a unique global mild solution: for all $t\ge 0$,
\[
\overline{\XX}(t)=e^{-t\IL}x_0+\int_0^t e^{-(t-s)\IL}\overline{F}(\overline{\XX}(s))ds.
\]

One has the following convergence result.
\begin{propo}\label{propo:averaging}
Let Assumptions~\ref{ass:L},~\ref{ass:F} and~\ref{ass:init} be satisfied. Then for all $t\ge 0$, $\XX^\epsilon(t)$ converges to $\overline{\XX}(t)$ when $\epsilon\to 0$, in the mean-square sense.

In addition, one has the following weak error estimate: for all $\kappa\in(0,\kappa_0)$ and $T\in(0,\infty)$, there exists $C_\kappa(T)\in(0,\infty)$ such that for any function $\varphi:H\to\R$ of class $\mathcal{C}^3$ with bounded derivatives of order $1,2,3$ and all $\epsilon\in(0,\epsilon_0)$ one has
\begin{equation}\label{eq:error_averaging}
\big|\E[\varphi(\XX^\epsilon(T))]-\E[\varphi(\overline{\XX}(T))]\big|\le C_\kappa(T)\vvvert\varphi\vvvert_3 \epsilon^{1-\kappa}\bigl(1+|\IL^{\frac{\kappa}{2}}x_0|^2\bigr).
\end{equation}
\end{propo}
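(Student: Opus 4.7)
The proof splits into the mean-square convergence and the weak error estimate~\eqref{eq:error_averaging}. For the mean-square convergence, my plan is to subtract the two mild formulations,
\[
\XX^\epsilon(t)-\overline{\XX}(t)=e^{-t\IL}(x_0^\epsilon-x_0)+\int_0^t e^{-(t-s)\IL}\bigl(F(\XX^\epsilon(s),\YY^\epsilon(s))-\overline{F}(\overline{\XX}(s))\bigr)ds,
\]
and split $F(\XX^\epsilon,\YY^\epsilon)-\overline{F}(\overline{\XX})$ into $F(\XX^\epsilon,\YY^\epsilon)-F(\overline{\XX},\YY^\epsilon)$ (absorbed by a Gronwall argument using the Lipschitz continuity from Assumption~\ref{ass:F}) plus $F(\overline{\XX},\YY^\epsilon)-\overline{F}(\overline{\XX})$. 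The second piece is a centered additive functional of an Ornstein--Uhlenbeck process whose $L^2$-norm is controlled using the identity in law $\YY^\epsilon(\cdot)\stackrel{d}{=}e^{-(\cdot)\IL/\epsilon}y_0^\epsilon+\YY(\cdot/\epsilon)$ together with the exponential ergodicity of $\YY$ toward $\nu$, and thus contributes a term that vanishes with $\epsilon$.

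For the weak error estimate I would use the classical Kolmogorov / Poisson equation strategy adapted to the infinite-dimensional setting. First, introduce the deterministic flow $\overline{\XX}^x$ of~\eqref{eq:averaged} and set $U(t,x)=\varphi(\overline{\XX}^x(T-t))$, so that $U$ satisfies the transport equation $\partial_t U+\langle -\IL x+\overline{F}(x),DU(t,x)\rangle=0$ with $U(T,\cdot)=\varphi$. Differentiating the variational equation for $x\mapsto\overline{\XX}^x(s)$ and using that its derivative-in-$x$ inherits the smoothing of $e^{-s\IL}$, I would establish that $|D^kU(t,x).(h_1,\ldots,h_k)|\le Ce^{CT}\vvvert\varphi\vvvert_k\prod_i|h_i|$ for $k=1,2,3$, together with refined bounds allowing a fractional $\IL^{\alpha}$ to be pulled out at the price of a $(T-t)^{-\alpha}$ singularity by~\eqref{eq:smoothing}. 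A chain rule (justified by Galerkin approximation) applied to $U(t,\XX^\epsilon(t))$ then reduces the weak error to
\[
\E[\varphi(\XX^\epsilon(T))]-\varphi(\overline{\XX}^{x_0^\epsilon}(T))=\int_0^T\E\bigl[\langle F(\XX^\epsilon(t),\YY^\epsilon(t))-\overline{F}(\XX^\epsilon(t)),DU(t,\XX^\epsilon(t))\rangle\bigr]dt,
\]
whose integrand is centered in the $\YY$-variable with respect to the invariant measure $\nu$.

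To extract a factor of $\epsilon$, I would solve the Poisson equation associated with the OU generator $\mathcal{L}_y$: for fixed $t,x$, set
\[
\Psi(t,x,y)=\int_0^\infty\E\bigl[\langle F(x,\YY^y(s))-\overline{F}(x),DU(t,x)\rangle\bigr]ds,
\]
which solves $-\mathcal{L}_y\Psi(t,x,\cdot)=\langle F(x,\cdot)-\overline{F}(x),DU(t,x)\rangle$ by the exponential convergence of the OU semigroup, and whose regularity is inherited from $F$ and $U$. Applying It\^o's formula to $\Psi(t,\XX^\epsilon(t),\YY^\epsilon(t))$ and rearranging turns the previous time integral into $\epsilon$ multiplied by boundary terms at $t=0,T$ and integrals of $\partial_t\Psi$ and $D_x\Psi\cdot(-\IL\XX^\epsilon(t)+F(\XX^\epsilon(t),\YY^\epsilon(t)))$. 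Combining this with the bound $|\varphi(\overline{\XX}^{x_0^\epsilon}(T))-\varphi(\overline{\XX}^{x_0}(T))|\le Ce^{CT}\vvvert\varphi\vvvert_1|x_0^\epsilon-x_0|$ for the initial-condition mismatch would yield~\eqref{eq:error_averaging}, provided every remainder is uniform in $\epsilon$.

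The main technical obstacle is precisely the term $\E[D_x\Psi\cdot(-\IL\XX^\epsilon)]$, since $\IL$ is unbounded on $H$. My plan is to split the pairing as $D_x\Psi\cdot\IL\XX^\epsilon=\langle\IL^{-\alpha}D_x\Psi,\IL^{1-\alpha}\XX^\epsilon\rangle$ with $\alpha$ just above $1/2$. The first factor is controlled by using that $D_xU(t,\cdot)$ carries the smoothing of $e^{-(T-t)\IL}$ and that this smoothing propagates through the Poisson representation to $D_x\Psi$, producing a $(T-t)^{-\alpha}$ singularity. The second factor is handled by the mild formulation of~\eqref{eq:SPDE} together with~\eqref{eq:momentboundmild} and the $H^{\kappa/2}$-regularity of the initial data from Assumption~\ref{ass:init}. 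Taking $\alpha=1-\kappa/2$ yields an integrable singularity $(T-t)^{-1+\kappa/2}$ and converts the $\epsilon$ gain from the Poisson equation into the stated $\epsilon^{1-\kappa}$ rate with prefactor $(1+|\IL^{\kappa/2}x_0|^2)$; this fractional-regularity balancing, absent in the finite-dimensional case, is the heart of the argument.
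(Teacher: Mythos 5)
Your proposal is correct in outline, but it takes a genuinely different route from the paper. The paper gives no proof at all of the mean-square statement (it simply cites~\cite{B:2012,MR2480788}), and it derives the weak estimate~\eqref{eq:error_averaging} in Section~\ref{sec:proofUA} as a corollary of the numerical analysis: it inserts the scheme via the triangle inequality
\[
\big|\E[\varphi(\XX^\epsilon(T))]-\E[\varphi(\overline{\XX}(T))]\big|\le \big|\E[\varphi(\XX^\epsilon(T))]-\E[\varphi(\XX_N^{\epsilon,\Delta t})]\big|+\big|\E[\varphi(\XX_N^{\epsilon,\Delta t})]-\E[\varphi(\XX_N^{\Delta t})]\big|+\big|\E[\varphi(\XX_N^{\Delta t})]-\E[\varphi(\overline{\XX}(T))]\big|,
\]
notes that the left-hand side does not depend on $\Delta t$, and chooses $N$ as a power of $\epsilon^{-1}$ so that the bounds of Propositions~\ref{propo:error_fixedepsilon}, \ref{propo:error_scheme-limitingscheme} and~\ref{propo:error_limitingscheme-averagedequation} all become $O(\epsilon^{1-\kappa})$ (the paper takes $N=\epsilon^{-2}+1$; the $(\Delta t/\epsilon)^{\frac12-\kappa}$ term of Proposition~\ref{propo:error_fixedepsilon} then only yields $\epsilon^{\frac12-\kappa}$, so a larger power of $\epsilon^{-1}$ is in fact needed to reach the stated rate). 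Your direct Kolmogorov-transport/Poisson-equation argument is precisely the strategy of~\cite{B:2012,B:2020}, which the paper cites for this result and deliberately does not reproduce. What each buys: the paper's route is essentially free once the machinery for Theorem~\ref{theo:UA} is in place and makes the commutativity of the asymptotic-preserving diagram explicit, at the price of requiring $\varphi$ of class $\mathcal{C}^3$ and a delicate balancing of exponents; your route is self-contained at the continuous level, needs no discretization, and works under weaker regularity of $\varphi$. Two points to tighten if you write yours up: the identity $D_x\Psi.(\IL x)=\langle\IL^{-\alpha}D_x\Psi,\IL^{1-\alpha}x\rangle$ is not correct as written --- what you actually need is that $h\mapsto D_x\Psi.h$ extends continuously to $H^{-\alpha}$ with a $(T-t)^{-\alpha}$ singularity, in the spirit of~\eqref{eq:lemuepsilon-1}, with $\alpha=1-\kappa/2$ fixed throughout rather than ``just above $1/2$''; and Assumption~\ref{ass:init} provides no rate for $|x_0^\epsilon-x_0|$, so your initial-condition mismatch term (exactly like the analogous term in the paper's proof of Proposition~\ref{propo:error_scheme-limitingscheme}) requires the implicit quantitative assumption $|x_0^\epsilon-x_0|=O(\epsilon)$.
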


For the mean-square convergence result, see for instance~\cite{B:2012,MR2480788}. The weak error estimate~\eqref{eq:error_averaging} is obtained in~\cite{B:2012,B:2020} (under weaker regularity conditions on $\varphi$). It is also retrieved in this article as a consequence of the auxiliary error estimates stated and proved below.

\subsection{Numerical scheme}\label{sec:scheme}

Let us introduce the notation required to define the numerical scheme studied in this work. The time-step size is denoted by $\Delta t$. Without loss of generality, it is assumed that there exists a fixed time $T\in(0,\infty)$ such that $\Delta t=T/N$ for some integer $N\in\N$. In the sequel, the limit $\Delta t\to 0$ is considered by letting $N\to\infty$ with $T$ fixed. In addition, it is assumed that $\Delta t\in(0,\Delta t_0)$. To simplify the notation, let $\tau=\frac{\Delta t}{\epsilon}$.

Let $\IA_{\Delta t}=(I+\Delta t\IL)^{-1}$, and introduce also linear operators $\IA_\tau$, $\IB_{\tau,1}$ and $\IB_{\tau,2}$ assumed to satisfy (see~\cite[Section~2]{B}) 
\begin{equation}\label{eq:operators}
\IA_\tau=(I+\tau\IL)^{-1},\quad \IB_{\tau,1}=\frac{1}{\sqrt{2}}(I+\tau\IL)^{-1},\quad \IB_{\tau,2}\IB_{\tau,2}^\star=\frac12(I+\tau\IL)^{-1},
\end{equation}
where $L^\star$ is the adjoint of a linear operator $L$.

Note that $\|\IA_{\Delta t}\|_{\mathcal{L}(H)}\le 1$ for all $\Delta t\in(0,\Delta t_0)$. In addition, one has the following property: for all $\alpha\in[0,1]$, there exists $C_\alpha\in(0,\infty)$ such that for all $\Delta t\in(0,\Delta t_0)$ and $n\in\{1,\ldots,N\}$, one has
\begin{equation}\label{eq:smoothingnum}
\|\IL^\alpha \IA_{\Delta t}^n\|_{\mathcal{L}(H)}\le \frac{C_\alpha}{(n\Delta t)^\alpha}.
\end{equation}

Let $\bigl(\Gamma_{n,1}\bigr)_{n\ge 0}$ and $\bigl(\Gamma_{n,2}\bigr)_{n\ge 0}$ be two independent sequences of independent cylindrical Gaussian random variables. Then the scheme is defined as follows: for all $n\ge 0$, set
\begin{equation}\label{eq:scheme}
\left\lbrace
\begin{aligned}
\XX_{n+1}^{\epsilon,\Delta t}&=\IA_{\Delta t}\bigl(\XX_n^{\epsilon,\Delta t}+\Delta t F(\XX_n^{\epsilon,\Delta t},\YY_{n+1}^{\epsilon,\Delta t})\bigr)\\
\YY_{n+1}^{\epsilon,\Delta t}&=\IA_{\frac{\Delta t}{\epsilon}}\YY_n^{\epsilon,\Delta t}+\sqrt{\frac{2\Delta t}{\epsilon}}\IB_{\frac{\Delta t}{\epsilon},1}\Gamma_{n,1}+\sqrt{\frac{2\Delta t}{\epsilon}}\IB_{\frac{\Delta t}{\epsilon},2}\Gamma_{n,2},
\end{aligned}
\right.
\end{equation}
with initial values $\XX_0^{\epsilon,\Delta t}=x_0^\epsilon$ and $\YY_0^{\epsilon,\Delta t}=y_0^\epsilon$.

On the one hand, in the scheme~\eqref{eq:scheme}, the slow component of~\eqref{eq:SPDE} is discretized using a linear implicit Euler scheme: the definition can be rewritten as
\[
\XX_{n+1}^{\epsilon,\Delta t}=\XX_{n}^{\epsilon,\Delta t}-\tau\IL \XX_{n+1}^{\epsilon,\Delta t}+\tau F(\XX_n^{\epsilon,\Delta t},\YY_{n+1}^{\epsilon,\Delta t}),
\]
which means that the linear part is discretized implicitly, whereas the nonlinearity part is discretized explicitly with respect to the slow component $\XX^\epsilon$ and implicitly with respect to the fast component $\YY^\epsilon$. This choice is motivated by the analysis of the scheme when $\epsilon\to 0$.

On the other hand, the fast component is discretized using the modified Euler scheme introduced in the recent work~\cite{B}: we refer to this preprint for the construction and the properties of this scheme, below we only recall the notation required in the analysis of the scheme~\eqref{eq:scheme}. Some properties of the scheme~\eqref{eq:scheme} are in fact already studied in~\cite[Section~9.1]{B}. As explained in~\cite{BR} (SDE case) and~\cite{B} SPDE case), discretizing the fast component using the standard linear implicit Euler scheme would not be appropriate in the regime $\epsilon\to 0$. The main advantage of the modified Euler scheme is the preservation of the invariant distribution $\nu$, for any choice of the time-step size $\Delta t$. The main tool to analyze the modified Euler scheme is the interpretation as the accelerated exponential Euler scheme applied to a modified stochastic evolution equation (see~\cite[Section~3.3]{B}): using the notation $\tau=\frac{\Delta t}{\epsilon}$ and $t_n^\tau=n\tau$, one has the equality in distribution
\[
\bigl(\YY_n^{\epsilon,\Delta t}\bigr)_{n\ge 0}=\bigl(\IY_n^{\tau}\bigr)_{n\ge 0},
\]
where
\begin{equation}\label{eq:scheme-interp}
\IY_{n+1}^\tau=e^{-\tau\IL_{\tau}}\IY_n^\tau+\sqrt{2}\int_{t_n^\tau}^{t_{n+1}^\tau}e^{-(t_{n+1}^{\tau}-t)\IL_\tau}Q_\tau^{\frac12}dW(s),
\end{equation}
with initial value $\IY_0^\tau=y_0^\epsilon=\YY_0^{\epsilon,\Delta t}$. The linear operators $\IL_\tau$, $Q_\tau$ and $Q_\tau^{\frac12}$ are defined by the following expressions:
\begin{equation}\label{eq:linearoperators}
\left\lbrace
\begin{aligned}
\IL_\tau x&=\sum_{j\in\N}\lambda_{\tau,j}\langle x,e_j\rangle e_j\\
Q_\tau x&=\sum_{j\in\N}q_{\tau,j}\langle x,e_j\rangle e_j\\
Q_\tau^{\frac12}x&=\sum_{j\in\N}\sqrt{q_{\tau,j}}\langle x,e_j\rangle e_j,
\end{aligned}
\right.
\end{equation}
where the eigenvalues are defined for all $j\in\N$ and $\tau\in(0,\infty)$ by
\begin{equation}\label{eq:eigenvalues}
\left\lbrace
\begin{aligned}
\lambda_{\tau,j}&=\frac{\log(1+\tau\lambda_j)}{\tau}\in(0,\lambda_j)\\
q_{\tau,j}&=\frac{\log(1+\tau\lambda_j)}{\tau\lambda_j}\in(0,1).
\end{aligned}
\right.
\end{equation}
The auxiliary process defined by~\eqref{eq:scheme-interp} satisfies $\IY_n^\tau=\IY^\tau(t_n^\tau)$ for all $n\ge 0$, where the process $\bigl(\IY^\tau(t)\bigr)_{t\ge 0}$ is the mild solution of the modified stochastic evolution equation
\begin{equation}\label{eq:auxOU}
d\IY^\tau(t)=-\IL_\tau\IY^\tau(t)dt+\sqrt{2}Q_\tau^{\frac12}dW(t).
\end{equation}

Let $\alpha\in[0,1]$ and set
\[
C_\alpha=\underset{z\in(0,\infty)}\sup~z^{-\alpha}|\frac{\log(1+z)}{z}-1|<\infty.
\]
One then obtains the following bonds (see~\cite[Section~5.1]{B}): for all $\tau\in(0,\infty)$ and $j\in\N$,
\begin{equation}\label{eq:error-eigenvalues}
\left\lbrace
\begin{aligned}
&0\le \lambda_{j}-\lambda_{\tau,j}\le C_\alpha \tau^\alpha \lambda_j^{1+\alpha}\\
&0\le 1-q_{\tau,j}\le C_\alpha\tau^{\alpha}\lambda_j^\alpha,
\end{aligned}
\right.
\end{equation}
which are used below to analyze the error.

Let us provide two results on the numerical scheme~\eqref{eq:scheme} which are used below to prove the main result of this article.

\begin{lemma}\label{lem:momentboundsscheme-epsilonDeltat}
Let Assumption~\ref{ass:init} be satisfied. Then one has
\begin{equation}\label{eq:boundYnepsilonDeltat}
\underset{\epsilon\in(0,\epsilon_0),\Delta t\in(0,\Delta t_0)}\sup~\underset{n\ge 0}\sup~\E[|\YY_n^{\epsilon,\Delta t}|^2]<\infty.
\end{equation}
Moreover, for all $T\in(0,\infty)$, there exists $C(T)\in(0,\infty)$ such that for all $\epsilon\in(0,\epsilon_0)$ and $\Delta t=T/N\in(0,\Delta t_0)$, one has
\begin{equation}\label{eq:boundXnepsilonDeltat}
\underset{0\le n\le N}\sup~\E[|\XX_n^{\epsilon,\Delta t}|^2]\le C(T)(1+|x_0|^2).
\end{equation}
\end{lemma}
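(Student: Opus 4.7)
The plan is to treat the fast component first and then propagate the bound to the slow component by a discrete Gronwall argument. For~\eqref{eq:boundYnepsilonDeltat}, I rely on the interpretation~\eqref{eq:scheme-interp}: in the orthonormal basis $(e_j)_{j\in\N}$, each coordinate $\langle \IY_n^\tau,e_j\rangle$ is a scalar Gaussian variable obtained from a discrete Ornstein--Uhlenbeck recursion, so an explicit computation yields
\[
\E[\langle\YY_n^{\epsilon,\Delta t},e_j\rangle^2]=e^{-2n\tau\lambda_{\tau,j}}\langle y_0^\epsilon,e_j\rangle^2+\frac{q_{\tau,j}}{\lambda_{\tau,j}}\bigl(1-e^{-2n\tau\lambda_{\tau,j}}\bigr).
\]
From the definitions~\eqref{eq:eigenvalues}, $e^{-\tau\lambda_{\tau,j}}=1/(1+\tau\lambda_j)$ and $q_{\tau,j}/\lambda_{\tau,j}=1/\lambda_j$, so the right-hand side is a convex combination of $\langle y_0^\epsilon,e_j\rangle^2$ and $1/\lambda_j$, both independent of $\tau$. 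Summing over $j\in\N$ gives
\[
\E[|\YY_n^{\epsilon,\Delta t}|^2]\le |y_0^\epsilon|^2+\mathrm{Tr}(\IL^{-1}),
\]
where the trace is finite because $\lambda_j\sim c_{\IL}j^2$ by Assumption~\ref{ass:L}, and $\sup_\epsilon|y_0^\epsilon|<\infty$ by Assumption~\ref{ass:init} taken with $\kappa=0$.

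For the slow component~\eqref{eq:boundXnepsilonDeltat}, iterating the first line of~\eqref{eq:scheme} yields the discrete variation-of-constants formula
\[
\XX_n^{\epsilon,\Delta t}=\IA_{\Delta t}^n x_0^\epsilon+\Delta t\sum_{k=0}^{n-1}\IA_{\Delta t}^{n-k}F(\XX_k^{\epsilon,\Delta t},\YY_{k+1}^{\epsilon,\Delta t}).
\]
Using $\|\IA_{\Delta t}\|_{\mathcal{L}(H)}\le 1$, the global Lipschitz bound $|F(x,y)|\le |F(0,0)|+C_F(|x|+|y|)$, and Minkowski inequality in $L^2(\Omega)$, one obtains
\[
\E[|\XX_n^{\epsilon,\Delta t}|^2]^{\frac12}\le |x_0^\epsilon|+C\Delta t\sum_{k=0}^{n-1}\bigl(1+\E[|\XX_k^{\epsilon,\Delta t}|^2]^{\frac12}+\E[|\YY_{k+1}^{\epsilon,\Delta t}|^2]^{\frac12}\bigr).
\]
Plugging in the uniform bound~\eqref{eq:boundYnepsilonDeltat} just established, applying the discrete Gronwall lemma with $n\Delta t\le T$, and using $|x_0^\epsilon|\le C(1+|x_0|)$ from Assumption~\ref{ass:init} yield the claim.

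The only delicate point is that the bound on $\YY_n^{\epsilon,\Delta t}$ must be genuinely uniform in $\tau=\Delta t/\epsilon\in(0,\infty)$. This uniformity is precisely the payoff of using the modified Euler scheme: the identities in~\eqref{eq:operators} and~\eqref{eq:eigenvalues} are tailored so that $q_{\tau,j}/\lambda_{\tau,j}=1/\lambda_j$ for every $\tau$, which means that the scheme preserves the invariant Gaussian measure $\nu=\mathcal{N}(0,\IL^{-1})$ exactly and that the stationary coordinate variance is $\tau$-independent. A standard linear implicit Euler discretization of the fast component would instead introduce $\tau$-dependent stationary variances and degrade the bound as $\tau\to\infty$, which is the main reason why the construction of~\cite{B} is essential here.
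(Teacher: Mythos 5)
Your proof is correct and follows essentially the same route as the paper: both bound $\YY_n^{\epsilon,\Delta t}$ via the interpretation~\eqref{eq:scheme-interp} as the exponential Euler scheme for the modified equation (your explicit coordinate-wise identity, showing the second moment is a convex combination of $\langle y_0^\epsilon,e_j\rangle^2$ and $1/\lambda_j$, is a slightly sharper packaging of the paper's separate bounds on the semigroup term and the stochastic convolution via It\^o's isometry), and both then obtain~\eqref{eq:boundXnepsilonDeltat} from $\|\IA_{\Delta t}\|_{\mathcal{L}(H)}\le 1$, the Lipschitz continuity of $F$, and a discrete Gronwall argument.
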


\begin{proof}[Proof of Lemma~\ref{lem:momentboundsscheme-epsilonDeltat}]
Let us first prove the inequality~\eqref{eq:boundYnepsilonDeltat}. One has, for all $n\ge 0$, the equality in distribution
\[
\YY_{n}^{\epsilon,\Delta t}=\IY_{n}^\tau=\IY^\tau(t_n^\tau)=e^{-t_n^\tau\IL_\tau}y_0^\epsilon+\sqrt{2}\int_0^{t_n^\tau}e^{-(t_n^\tau-t)\IL_\tau}Q_\tau^{\frac12}dW(t).
\]
On the one hand, for all $n\ge 0$ and $\Delta t\in(0,\Delta t_0)$, one has
\[
\underset{\epsilon\in(0,\epsilon_0)}\sup~|e^{-t_n^\tau\IL_\tau}y_0^\epsilon|\le \underset{\epsilon\in(0,\epsilon_0)}\sup~|y_0^\epsilon|<\infty,
\]
owing to Assumption~\ref{ass:init}, since $\lambda_{\tau,j}\ge 0$ for all $j\in\N$ and $\tau\in(0,\infty)$.

On the other hand, using It\^o's isometry formula, it is straightforward to check that one has
\[
\E[\big|\sqrt{2}\int_0^{t_n^\tau}e^{-(t_n^\tau-t)\IL_\tau}Q_\tau^{\frac12}dW(s)|^2]\le \int|y|^2d\nu(y)=\sum_{j\in\N}\frac{1}{\lambda_j}<\infty.
\]
This concludes the proof of the inequality~\eqref{eq:boundYnepsilonDeltat}. Let us now prove the inequality~\eqref{eq:boundXnepsilonDeltat}. Since $F$ is globally Lipschitz continuous (Assumption~\ref{ass:F}) and since $\|\IA_{\Delta t}\|_{\mathcal{L}(H)}\le 1$, for all $n\ge 0$, one has
\begin{align*}
|\XX_{n+1}^{\epsilon,\Delta t}|&\le |\XX_n^{\epsilon,\Delta t}|+\Delta t|F(\XX_n^{\epsilon,\Delta t},\YY_{n+1}^{\epsilon,\Delta t})|\\
&\le (1+C\Delta t)|\XX_n^{\epsilon,\Delta t}|+C\Delta t(1+|\YY_{n+1}^{\epsilon,\Delta t}|).
\end{align*}
Since $\underset{\epsilon\in(0,\epsilon_0)}\sup~|x_0^\epsilon|\le C(1+|x_0|)$ owing to Assumption~\ref{ass:init}, the inequality~\eqref{eq:boundXnepsilonDeltat} is then obtained by a straightforward argument, using the inequality~\eqref{eq:boundYnepsilonDeltat} proved above. The proof of Lemma~\ref{lem:momentboundsscheme-epsilonDeltat} is thus completed.
\end{proof}

\begin{lemma}\label{lem:incrementsscheme}
For all $\kappa\in(0,1)$ and $T\in(0,\infty)$, there exists $C_\kappa(T)\in(0,\infty)$ such that for all $\epsilon\in(0,\epsilon_0)$ and $\Delta t=T/N\in(0,\Delta t_0)$, for all $n\in\{1,\ldots,N\}$, one has
\begin{equation}\label{eq:momentschemepower}
\bigl(\E[|\IL^{1-\kappa}\XX_n^{\epsilon,\Delta t}|^2]\bigr)^{\frac12}\le \frac{C_\kappa(T)}{(n\Delta t)^{1-\kappa}}(1+|x_0|).
\end{equation}
Moreover, all $\kappa\in(0,\kappa_0)$ and $T\in(0,\infty)$, there exists $C_\kappa(T)\in(0,\infty)$ such that for all $\epsilon\in(0,\epsilon_0)$ and $\Delta t=T/N\in(0,\Delta t_0)$, one has
\begin{equation}\label{eq:momentschemepowerbis}
\underset{0\le n\le N}\sup~\bigl(\E[|\IL^{\frac{\kappa}{2}}\XX_n^{\epsilon,\Delta t}|^2]\bigr)^{\frac12}\le C_\kappa(T)(1+|\IL^{\frac{\kappa}{2}}x_0|).
\end{equation}

Finally, for all $n\in\{1,\ldots,N\}$, one has
\begin{equation}\label{eq:incrementsscheme}
\bigl(\E[|\XX_{n+1}^{\epsilon,\Delta t}-\XX_n^{\epsilon,\Delta t}|^2]\bigr)^{\frac12}\le \frac{C_\kappa(T)}{(n\Delta t)^{1-\kappa}}\Delta t^{1-\kappa}(1+|x_0|)
\end{equation}
and for all $n\in\{0,\ldots,N\}$, one has
\begin{equation}\label{eq:incrementsscheme-bis}
\bigl(\E[|\IL^{-1+\kappa}\bigl(\XX_{n+1}^{\epsilon,\Delta t}-\XX_n^{\epsilon,\Delta t}\bigr)|^2]\bigr)^{\frac12}\le C_\kappa(T)\Delta t^{1-\kappa}(1+|x_0|).
\end{equation}
\end{lemma}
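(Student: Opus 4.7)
The starting point is the closed-form iterated representation
\begin{equation*}
\XX_n^{\epsilon,\Delta t}=\IA_{\Delta t}^n x_0^\epsilon+\Delta t\sum_{k=0}^{n-1}\IA_{\Delta t}^{n-k}F(\XX_k^{\epsilon,\Delta t},\YY_{k+1}^{\epsilon,\Delta t}),
\end{equation*}
obtained by unrolling the slow update in~\eqref{eq:scheme}. All four bounds reduce to combining this formula with the smoothing property~\eqref{eq:smoothingnum}, the global Lipschitz estimate on $F$ from Assumption~\ref{ass:F}, and the uniform moment bounds of Lemma~\ref{lem:momentboundsscheme-epsilonDeltat}.

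To establish~\eqref{eq:momentschemepower}, I apply $\IL^{1-\kappa}$ to the representation. The initial contribution is bounded by $C_\kappa (n\Delta t)^{-(1-\kappa)}|x_0^\epsilon|$ via~\eqref{eq:smoothingnum}, and Assumption~\ref{ass:init} gives $|x_0^\epsilon|\le C(1+|x_0|)$. The sum is estimated in $L^2$ using~\eqref{eq:smoothingnum}, the Lipschitz bound on $F$, and the moment bounds of Lemma~\ref{lem:momentboundsscheme-epsilonDeltat}, which produces the Riemann-type sum
\begin{equation*}
\Delta t\sum_{k=0}^{n-1}\frac{C_\kappa}{((n-k)\Delta t)^{1-\kappa}}(1+|x_0|)=C_\kappa\Delta t^{\kappa}\sum_{j=1}^n j^{-(1-\kappa)}(1+|x_0|)\le C_\kappa(n\Delta t)^\kappa(1+|x_0|).
\end{equation*}
Since $(n\Delta t)^\kappa\le T(n\Delta t)^{-(1-\kappa)}$, this is absorbed into the target bound. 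The proof of~\eqref{eq:momentschemepowerbis} is analogous but uses $\IL^{\kappa/2}$: the initial term is bounded directly by $|\IL^{\kappa/2}x_0^\epsilon|$ since $\|\IA_{\Delta t}\|_{\mathcal{L}(H)}\le 1$, and the sum yields $C_\kappa T^{1-\kappa/2}(1+|x_0|)$; the desired form $(1+|\IL^{\kappa/2}x_0|)$ is then recovered by combining Assumption~\ref{ass:init} with the elementary inequality $|x_0|\le \lambda_1^{-\kappa/2}|\IL^{\kappa/2}x_0|$ coming from Assumption~\ref{ass:L}.

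The two increment estimates are proved from the algebraic identity $\IA_{\Delta t}-I=-\Delta t\,\IL\,\IA_{\Delta t}$, which, applied to~\eqref{eq:scheme}, gives
\begin{equation*}
\XX_{n+1}^{\epsilon,\Delta t}-\XX_n^{\epsilon,\Delta t}=-\Delta t\,\IL\,\IA_{\Delta t}\XX_n^{\epsilon,\Delta t}+\Delta t\,\IA_{\Delta t}F(\XX_n^{\epsilon,\Delta t},\YY_{n+1}^{\epsilon,\Delta t}).
\end{equation*}
For~\eqref{eq:incrementsscheme} I factor $\IL\,\IA_{\Delta t}=(\IL^{\kappa}\IA_{\Delta t})\,\IL^{1-\kappa}$; by~\eqref{eq:smoothingnum} the first factor costs $\Delta t^{-\kappa}$, while~\eqref{eq:momentschemepower} controls $\IL^{1-\kappa}\XX_n^{\epsilon,\Delta t}$, yielding the desired $\frac{C_\kappa\Delta t^{1-\kappa}}{(n\Delta t)^{1-\kappa}}(1+|x_0|)$; the nonlinearity term contributes $C\Delta t(1+|x_0|)$, which is of lower order since $n\Delta t\le T$. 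For~\eqref{eq:incrementsscheme-bis} I apply $\IL^{-1+\kappa}$: the first term becomes $\Delta t\,\IL^{\kappa}\IA_{\Delta t}\XX_n^{\epsilon,\Delta t}$ and is bounded by $C_\kappa\Delta t^{1-\kappa}|\XX_n^{\epsilon,\Delta t}|$ via~\eqref{eq:smoothingnum}, then by Lemma~\ref{lem:momentboundsscheme-epsilonDeltat}; the second term uses that $\IL^{-1+\kappa}$ is a bounded operator for $\kappa<1$ (since $\lambda_1>0$).

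All steps are direct discrete analogues of standard mild-formulation estimates, so no step is genuinely difficult. The only technical point requiring care is ensuring that the dependence on initial data on the right-hand side matches the stated form, which amounts to combining the Riemann-sum identity $\Delta t\sum_{j=1}^{n}j^{-(1-\kappa)}\le C_\kappa n^\kappa$ with Assumption~\ref{ass:init} and, for~\eqref{eq:momentschemepowerbis}, with the embedding $H^{\kappa/2}\hookrightarrow H$ granted by $\lambda_1>0$.
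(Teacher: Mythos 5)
Your proof is correct and follows essentially the same route as the paper's: unrolling the recursion, applying the discrete smoothing inequality~\eqref{eq:smoothingnum} together with the Lipschitz bound on $F$ and the moment bounds of Lemma~\ref{lem:momentboundsscheme-epsilonDeltat}, and treating the increments via $\IA_{\Delta t}-I=-\Delta t\,\IL\IA_{\Delta t}$ (which is just the derivation of the operator bound $\|\IL^{-\alpha}(\IA_{\Delta t}-I)\|_{\mathcal{L}(H)}\le C_\alpha\Delta t^\alpha$ that the paper quotes directly). The only differences are cosmetic: you make explicit the Riemann-sum absorption $(n\Delta t)^\kappa\le T(n\Delta t)^{-(1-\kappa)}$ and the embedding $|x_0|\le\lambda_1^{-\kappa/2}|\IL^{\kappa/2}x_0|$, both of which the paper leaves implicit.
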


\begin{proof}[Proof of Lemma~\ref{lem:incrementsscheme}]
Using the inequality~\eqref{eq:smoothingnum}, the identity
\[
\XX_{n}^{\epsilon,\Delta t}=\IA_{\Delta t}^n x_0^\epsilon+\Delta t\sum_{\ell=0}^{n-1}\IA_{\Delta t}^{n-\ell}F(\XX_\ell^{\epsilon,\Delta t},\YY_\ell^{\epsilon,\Delta t}),
\]
the moment bound~\eqref{eq:momentschemepower} for $\IL^{1-\kappa}\XX_n^{\epsilon,\Delta t}$ is obtained as follows: for all $n\in\{1,\ldots,N\}$, one has
\begin{align*}
\bigl(\E[|\IL^{1-\kappa}\XX_n^{\epsilon,\Delta t}|^2]\bigr)^{\frac12}&\le \frac{C_\kappa}{(n\Delta t)^{1-\kappa}}|x_0|+\Delta t\sum_{\ell=0}^{n-1}\frac{C_\kappa}{\bigl((n-\ell)\Delta t\bigr)^{1-\kappa}}\bigl(\E[|F(\XX_\ell^{\epsilon,\Delta t},\YY_\ell^{\epsilon,\Delta t})|^2]\bigr)^{\frac12}\\
&\le \frac{C_\kappa}{(n\Delta t)^{1-\kappa}}|x_0|+\Delta t\sum_{\ell=0}^{n-1}\frac{C_\kappa}{\bigl((n-\ell)\Delta t\bigr)^{1-\kappa}} (1+|x_0|)\\
&\le \frac{C_\kappa(T)}{(n\Delta t)^{1-\kappa}}(1+|x_0|).
\end{align*}
The moment bound~\eqref{eq:momentschemepowerbis} is proved using in addition the condition $|\IL^{\frac{\kappa}{2}}x_0|\le |\IL^{\frac{\kappa_0}{2}}x_0|<\infty$ from Assumption~\ref{ass:init} for all $n\in\{0,\ldots,N\}$, one has
\begin{align*}
\bigl(\E[|\IL^{\frac{\kappa}{2}}\XX_n^{\epsilon,\Delta t}|^2]\bigr)^{\frac12}&\le |\IL^{\frac{\kappa}{2}}x_0^\epsilon|+\Delta t\sum_{\ell=0}^{n-1}\frac{C_\kappa}{\bigl((n-\ell)\Delta t\bigr)^{\frac{\kappa}{2}}}\bigl(\E[|F(\XX_\ell^{\epsilon,\Delta t},\YY_\ell^{\epsilon,\Delta t})|^2]\bigr)^{\frac12}\\
&\le |\IL^{\frac{\kappa}{2}}x_0^\epsilon|+\Delta t\sum_{\ell=0}^{n-1}\frac{C_\kappa}{\bigl((n-\ell)\Delta t\bigr)^{\frac{\kappa}{2}}} (1+|x_0|)\\
&\le C_\kappa(T)(1+|\IL^{\frac{\kappa}{2}}x_0|).
\end{align*}
Let us now prove the inequality~\eqref{eq:incrementsscheme}. Using the inequality
\[
\|\IL^{-\alpha}(\IA_{\Delta t}-I)\|_{\mathcal{L}(H)}\le C_\alpha\Delta t^\alpha,
\]
with $C_\alpha\in(0,\infty)$, and the definition~\eqref{eq:scheme} of the scheme, one has
\begin{align*}
|\XX_{n+1}^{\epsilon,\Delta t}-\XX_n^{\epsilon,\Delta t}|&=\big|(\IA_{\Delta t}-I)\XX_n^{\epsilon,\Delta t}+\Delta t\IA_{\Delta t}F(\XX_n^{\epsilon,\Delta t},\YY_{n+1}^{\epsilon,\Delta t}\big|\\
&\le |(\IA_{\Delta t}-I)\XX_n^{\epsilon,\Delta t}|+\Delta t|F(\XX_n^{\epsilon,\Delta t},\YY_{n+1}^{\epsilon,\Delta t})|\\
&\le C_\kappa\Delta t^{1-\kappa}|\IL^{1-\kappa}\XX_n^{\epsilon,\Delta t}|+\Delta t\bigl(1+|\XX_n^{\epsilon,\Delta t}|+|\YY_{n+1}^{\epsilon,\Delta t}|\bigr).
\end{align*}
Using the moment bounds~\eqref{eq:boundXnepsilonDeltat} and~\eqref{eq:boundYnepsilonDeltat} from Lemma~\ref{lem:momentboundsscheme-epsilonDeltat} and the moment bound~\eqref{eq:momentschemepower} proved above, one then obtains the inequality~\eqref{eq:incrementsscheme}.

Finally, to obtain the inequality~\eqref{eq:incrementsscheme-bis}, note that
\begin{align*}
|\IL^{-1+\kappa}\bigl(\XX_{n+1}^{\epsilon,\Delta t}-\XX_n^{\epsilon,\Delta t}\bigr)|&\le |\IL^{-1+\kappa}(\IA_{\Delta t}-I)\XX_n^{\epsilon,\Delta t}|+\Delta t|F(\XX_n^{\epsilon,\Delta t},\YY_{n+1}^{\epsilon,\Delta t})|\\
&\le C_\kappa\Delta t^{1-\kappa}|\XX_n^{\epsilon,\Delta t}|+\Delta t\bigl(1+|\XX_n^{\epsilon,\Delta t}|+|\YY_{n+1}^{\epsilon,\Delta t}|\bigr).
\end{align*}
Using the moment bounds~\eqref{eq:boundXnepsilonDeltat} and~\eqref{eq:boundYnepsilonDeltat} from Lemma~\ref{lem:momentboundsscheme-epsilonDeltat}, one then obtains the inequality~\eqref{eq:incrementsscheme-bis}.

The proof of Lemma~\ref{lem:incrementsscheme} is completed.
\end{proof}

\section{Main results}\label{sec:main}

\subsection{Asymptotic preserving property}\label{sec:AP}

Introduce the limiting scheme defined as follows: for all $\Delta t=T/N\in(0,\Delta t_0)$ and $n\in\{0,\ldots,N-1\}$, set
\begin{equation}\label{eq:limitingscheme}
\XX_{n+1}^{\Delta t}=\IA_{\Delta t}\XX_n^{\Delta t}+\Delta t\IA_{\Delta t}F(\XX_n^{\Delta t},\IL^{-\frac12}\Gamma_{n})
\end{equation}
with initial value $\XX_0^{\Delta t}=x_0$.

\begin{lemma}\label{lem:momentboundscheme}
For all $T\in(0,\infty)$, there exists $C(T)\in(0,\infty)$ such that for all $\Delta t=T/N\in(0,\Delta t_0)$, one has
\begin{equation}\label{eq:momentboundscheme}
\underset{0\le n\le N}\sup~\E[|\XX_n^{\Delta t}|^2]\le C(T)(1+|x_0|^2).
\end{equation}
\end{lemma}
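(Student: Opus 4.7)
The plan is to mimic exactly the short argument used for~\eqref{eq:boundXnepsilonDeltat} in Lemma~\ref{lem:momentboundsscheme-epsilonDeltat}, since the limiting scheme~\eqref{eq:limitingscheme} has the same structure as the slow component of~\eqref{eq:scheme} with $\YY_{n+1}^{\epsilon,\Delta t}$ replaced by the Gaussian variable $\IL^{-\frac12}\Gamma_n$. The only input that changes is the moment bound on the argument fed to $F$ in the $y$-slot; for that we use that $\Gamma_n$ is a cylindrical Gaussian random variable, so
\[
\E\bigl[|\IL^{-\frac12}\Gamma_n|^2\bigr]=\sum_{j\in\N}\frac{1}{\lambda_j}<\infty
\]
by Assumption~\ref{ass:L} (since $\lambda_j\sim c_\IL j^2$). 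In particular this moment is finite, independent of $n$ and $\Delta t$.

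Starting from the recursion~\eqref{eq:limitingscheme} and using $\|\IA_{\Delta t}\|_{\mathcal{L}(H)}\le 1$ together with the global Lipschitz property of $F$ (consequence of Assumption~\ref{ass:F}), which gives $|F(x,y)|\le C_F(1+|x|+|y|)$, one obtains
\[
|\XX_{n+1}^{\Delta t}|\le |\XX_n^{\Delta t}|+\Delta t\,|F(\XX_n^{\Delta t},\IL^{-\frac12}\Gamma_n)|\le (1+C\Delta t)|\XX_n^{\Delta t}|+C\Delta t\bigl(1+|\IL^{-\frac12}\Gamma_n|\bigr).
\]
Squaring, taking expectation, using the elementary inequality $(a+b)^2\le (1+\Delta t)a^2+(1+1/\Delta t)b^2$, and invoking the moment bound on $\IL^{-\frac12}\Gamma_n$ displayed above, yields
\[
\E[|\XX_{n+1}^{\Delta t}|^2]\le (1+C'\Delta t)\E[|\XX_n^{\Delta t}|^2]+C''\Delta t,
\]
for some constants $C',C''\in(0,\infty)$ that do not depend on $\epsilon$ or $\Delta t$.

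A discrete Gronwall argument then gives, for all $n\in\{0,\ldots,N\}$,
\[
\E[|\XX_n^{\Delta t}|^2]\le e^{C'T}|x_0|^2+\frac{C''}{C'}\bigl(e^{C'T}-1\bigr)\le C(T)(1+|x_0|^2),
\]
which is exactly~\eqref{eq:momentboundscheme}. There is no real obstacle here: the only minor point worth stating explicitly is the finiteness of $\E[|\IL^{-\frac12}\Gamma_n|^2]$, which is where Assumption~\ref{ass:L} enters; everything else is a direct copy of the end of the proof of Lemma~\ref{lem:momentboundsscheme-epsilonDeltat}.
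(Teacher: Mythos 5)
Your proof is correct and follows essentially the same route as the paper: the paper's proof of Lemma~\ref{lem:momentboundscheme} uses exactly the recursion $|\XX_{n+1}^{\Delta t}|\le (1+C\Delta t)|\XX_n^{\Delta t}|+C\Delta t(1+|\IL^{-\frac12}\Gamma_n|)$ together with $\E[|\IL^{-\frac12}\Gamma_n|^2]=\sum_{j}\lambda_j^{-1}<\infty$ and then concludes by "a straightforward argument". Your squaring step and discrete Gronwall iteration merely spell out that final step explicitly, so nothing is missing.
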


\begin{proof}[Proof of Lemma~\ref{lem:momentboundscheme}]
Since $F$ is globally Lipschitz continuous (Assumption~\ref{ass:F}), for all $n\ge 0$, one has
\begin{align*}
|\XX_{n+1}^{\Delta t}|&\le |\XX_n^{\Delta t}|+\Delta t|F(\XX_n^{\Delta t},\IL^{-\frac12}\Gamma_n)|\\
&\le (1+C\Delta t)|\XX_n^{\Delta t}|+C\Delta t(1+|\IL^{-\frac12}\Gamma_n|).
\end{align*}
Using the property
\[
\E[|\IL^{-\frac12}\Gamma_n|^2]=\sum_{j\in\N}\frac{1}{\lambda_j}<\infty,
\]
the inequality~\eqref{eq:momentboundscheme} is obtained by a straightforward argument. This concludes the proof of Lemma~\ref{lem:momentboundscheme}.
\end{proof}

The fact that~\eqref{eq:limitingscheme} defines the limiting scheme associated with the scheme~\eqref{eq:scheme} when $\epsilon\to 0$ for fixed time-step size $\Delta t$ is justified by Proposition~\ref{propo:cv_scheme-limitingscheme}.

\begin{propo}\label{propo:cv_scheme-limitingscheme}
Let $\varphi:H\to\R$ be a globally Lipschitz continuous function. For all $T\in(0,\infty)$, $\Delta t\in(0,\Delta t_0)$ and $n\in\{0,\ldots,N\}$, one has
\begin{equation}\label{eq:cvlimitingscheme}
\underset{\epsilon\to 0}\lim~\E[\varphi(\XX_{n}^{\epsilon,\Delta t})]=\E[\varphi(\XX_{n}^{\Delta t})].
\end{equation}
\end{propo}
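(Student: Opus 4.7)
The plan is to proceed by finite induction on $n \in \{0, \ldots, N\}$, establishing the stronger statement that $\XX_n^{\epsilon,\Delta t}$ converges in distribution to $\XX_n^{\Delta t}$ as $\epsilon \to 0$. Since $\varphi$ is globally Lipschitz and Lemma~\ref{lem:momentboundsscheme-epsilonDeltat} provides the uniform bound $\sup_{\epsilon \in (0,\epsilon_0)}\E[|\XX_n^{\epsilon,\Delta t}|^2] < \infty$, the family $\{\varphi(\XX_n^{\epsilon,\Delta t})\}_\epsilon$ is uniformly integrable, and convergence in distribution upgrades to the convergence of expectations~\eqref{eq:cvlimitingscheme}. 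The base case $n=0$ is immediate from $\XX_0^{\epsilon,\Delta t} = x_0^\epsilon \to x_0 = \XX_0^{\Delta t}$ in $H$ (Assumption~\ref{ass:init}).

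For the inductive step, I would isolate the noise entering step $n+1$ from the noise accumulated previously. With $\tau = \Delta t/\epsilon$, the scheme~\eqref{eq:scheme} gives the decomposition
\begin{equation*}
\YY_{n+1}^{\epsilon,\Delta t} = \IA_\tau \YY_n^{\epsilon,\Delta t} + Z_n^\tau, \qquad Z_n^\tau := \sqrt{2\tau}\,\IB_{\tau,1}\Gamma_{n,1} + \sqrt{2\tau}\,\IB_{\tau,2}\Gamma_{n,2},
\end{equation*}
and the fresh piece $Z_n^\tau$ is independent of $\XX_n^{\epsilon,\Delta t}$. Two elementary facts drive the analysis. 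First, $\IA_\tau \YY_n^{\epsilon,\Delta t} \to 0$ in $L^2(\Omega;H)$ uniformly in $\epsilon$: for any $\kappa \in (0,1)$ the spectral estimate $(1+\tau\lambda_j)^{-2} \le C_\kappa \tau^{-\kappa}\lambda_j^{-\kappa}$ yields $|\IA_\tau y|^2 \le C_\kappa' \tau^{-\kappa}|y|^2$, combined with the uniform bound on $\E[|\YY_n^{\epsilon,\Delta t}|^2]$ from Lemma~\ref{lem:momentboundsscheme-epsilonDeltat}. Second, $Z_n^\tau$ is a centered $H$-valued Gaussian whose covariance operator, by~\eqref{eq:operators}, has eigenvalues $c_{\tau,j} = \tau(2+\tau\lambda_j)/(1+\tau\lambda_j)^2$ satisfying $c_{\tau,j} \le \lambda_j^{-1}$ and $c_{\tau,j} \to \lambda_j^{-1}$ as $\tau \to \infty$; since $\sum_j \lambda_j^{-1} < \infty$ (Assumption~\ref{ass:L}), dominated convergence promotes this to $L^2$ convergence $Z_n^\tau \to \IL^{-\frac12}\Gamma_n$ under the natural spectral coupling, hence convergence in distribution to $\mathcal{N}(0,\IL^{-1}) = \nu$.

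Combining these two observations with the Lipschitz continuity of $F$ (Assumption~\ref{ass:F}),
\begin{equation*}
\XX_{n+1}^{\epsilon,\Delta t} - \IA_{\Delta t}\bigl(\XX_n^{\epsilon,\Delta t} + \Delta t\, F(\XX_n^{\epsilon,\Delta t}, Z_n^\tau)\bigr) \longrightarrow 0 \quad \text{in } L^2(\Omega; H).
\end{equation*}
By the inductive hypothesis $\XX_n^{\epsilon,\Delta t} \to \XX_n^{\Delta t}$ in distribution and the independence of $Z_n^\tau$ from $\XX_n^{\epsilon,\Delta t}$, the standard joint-convergence result for independent families yields $(\XX_n^{\epsilon,\Delta t}, Z_n^\tau) \to (\XX_n^{\Delta t}, \IL^{-\frac12}\Gamma_n)$ in distribution on $H \times H$, with independent limiting components. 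The continuous mapping theorem applied to the continuous map $(x,z) \mapsto \IA_{\Delta t}(x + \Delta t F(x,z))$, together with Slutsky's theorem, then gives $\XX_{n+1}^{\epsilon,\Delta t} \to \XX_{n+1}^{\Delta t}$ in distribution, closing the induction.

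The main conceptual point (and the only non-routine step) is the second fact above: the modified Euler noise $Z_n^\tau$ has covariance converging to $\IL^{-1}$ as $\tau \to \infty$. This is precisely the design feature of the scheme of~\cite{B} ensuring that the invariant distribution $\nu$ is preserved; had the standard linear implicit Euler discretization of the fast equation been used instead, $Z_n^\tau$ would converge to a different Gaussian limit and the limiting scheme~\eqref{eq:limitingscheme} would no longer be correct.
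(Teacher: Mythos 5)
Your argument is correct, and its analytical core coincides with the paper's: both proofs rest on the same decomposition $\YY_{n+1}^{\epsilon,\Delta t}=\IA_{\tau}\YY_n^{\epsilon,\Delta t}+Z_n^{\tau}$, on the damping $\|\IA_\tau\|_{\mathcal{L}(H)}\le (1+\lambda_1\tau)^{-1}\to 0$ combined with the uniform moment bound from Lemma~\ref{lem:momentboundsscheme-epsilonDeltat}, and on the explicit computation showing that the covariance eigenvalues $\tau(2+\tau\lambda_j)/(1+\tau\lambda_j)^2$ of the fresh noise are dominated by $\lambda_j^{-1}$ and converge to $\lambda_j^{-1}$ as $\tau=\Delta t/\epsilon\to\infty$, so that $Z_n^{\tau}$ converges in law to $\nu=\mathcal{N}(0,\IL^{-1})$. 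Where you differ is only in the probabilistic bookkeeping. The paper first replaces the two noises by a single Gaussian, $\sqrt{2\tau}\,\IB_\tau\Gamma_n$, driven by the \emph{same} $\Gamma_n$ as the limiting scheme~\eqref{eq:limitingscheme}, i.e.\ it builds a synchronous coupling, and then proves $\E[|\hat{\XX}_n^{\epsilon,\Delta t}-\XX_n^{\Delta t}|]\to 0$ by a discrete Gronwall recursion; you instead run the induction at the level of laws, using the independence of $Z_n^{\tau}$ from $\XX_n^{\epsilon,\Delta t}$ to get joint weak convergence with independent limits, then the continuous mapping theorem and Slutsky, and finally uniform integrability (from the uniform second moment bound and the Lipschitz growth of $\varphi$) to pass from convergence in distribution to convergence of expectations. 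Both routes are sound: the coupling gives the slightly stronger $L^1$ statement as a byproduct, while your version avoids realizing the limiting Gaussians on the same probability space and isolates cleanly the one place where the convergence is genuinely only distributional. Your closing remark about why the modified Euler scheme (rather than the standard implicit Euler scheme) is needed for the fast component is precisely the point emphasized in the paper.
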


In addition, the limiting scheme~\eqref{eq:limitingscheme} is consistent with the limiting evolution equation~\eqref{eq:averaged}, as justified by Proposition~\ref{propo:error_limitingscheme-averagedequation} below.
\begin{propo}\label{propo:error_limitingscheme-averagedequation}
For all $\kappa\in(0,1)$ and $T\in(0,\infty)$, there exists $C_\kappa(T)\in(0,\infty)$ such that for any function $\varphi:H\to\R$ of class $\mathcal{C}^2$ with bounded first and second order derivatives, for all $\Delta t\in(0,\Delta t_0)$, one has
\begin{equation}\label{eq:error_limitingscheme-averagedequation}
\big|\E[\varphi(\XX_N^{\Delta t})]-\varphi(\overline{\XX}(T))\big|\le C_\kappa(T)\vvvert\varphi\vvvert_2\Delta t^{1-\kappa}(1+|x_0|^2).
\end{equation}
\end{propo}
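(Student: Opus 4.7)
The plan is to use the Kolmogorov function $\overline u(t,x)=\varphi(\overline\XX(T;t,x))$ associated with the averaged equation~\eqref{eq:averaged}, where $\overline\XX(\cdot;t,x)$ is the flow of the deterministic limit started from $x$ at time $t$. Since the limit dynamics is noise-free, $\overline u$ is a classical solution of the transport-type Kolmogorov equation
\[
\partial_t\overline u(t,x)+D\overline u(t,x)\cdot\bigl(-\IL x+\overline{F}(x)\bigr)=0,\qquad \overline u(T,\cdot)=\varphi,
\]
and the regularity bounds that drive the analysis (boundedness of $D\overline u$ and $D^2\overline u$, together with smoothing-type estimates $|D\overline u(t,x)\cdot h|\le C(T-t)^{-\alpha}\vvvert\varphi\vvvert_1|\IL^{-\alpha}h|$ and a two-parameter analogue for $D^2\overline u$) should come from Lemma~\ref{lem:ubarDeltat}.

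I would telescope the global error as $\E[\varphi(\XX_N^{\Delta t})]-\varphi(\overline\XX(T))=\sum_{n=0}^{N-1}\E[\overline u(t_{n+1},\XX_{n+1}^{\Delta t})-\overline u(t_n,\XX_n^{\Delta t})]$ and split each increment into a time part (rewritten via the Kolmogorov equation as $-\int_{t_n}^{t_{n+1}}D\overline u(s,\XX_n^{\Delta t})\cdot g_n\,ds$ with $g_n=-\IL\XX_n^{\Delta t}+\overline{F}(\XX_n^{\Delta t})$) and a space part treated by second-order Taylor expansion at $\XX_n^{\Delta t}$. The crucial probabilistic input is that $\Gamma_n$ is independent of $\mathcal F_n$ and $\E[F(x,\IL^{-1/2}\Gamma_n)]=\overline{F}(x)$, so
\[
\E[\XX_{n+1}^{\Delta t}-\XX_n^{\Delta t}\mid\mathcal F_n]=\Delta t\,\IA_{\Delta t}g_n,
\]
which removes the leading-order stochastic contribution in the first-order Taylor term and leaves only a deterministic-looking discretization defect plus the quadratic Taylor remainder.

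For the discretization defect, the identity $\IA_{\Delta t}-I=-\Delta t\IL\IA_{\Delta t}$ combined with the smoothing bounds on $\IL^{-\alpha}D\overline u$ and the estimate $\|\IL^{1-\alpha}\IA_{\Delta t}\|_{\mathcal L(H)}\le C\Delta t^{\alpha-1}$ yields a local contribution of order $\Delta t^{1+\alpha}(T-t_{n+1})^{-\alpha}$; with $\alpha=1-\kappa$ this sums to the target $\Delta t^{1-\kappa}$, and the time-integral residue is handled analogously by using the Kolmogorov equation to rewrite $\partial_sD\overline u$ through $D^2\overline u$. The quadratic remainder $\E[D^2\overline u(t_{n+1},\cdot)\cdot\delta_n^{\otimes 2}]$ with $\delta_n=\XX_{n+1}^{\Delta t}-\XX_n^{\Delta t}$ is controlled using smoothing-type bounds on $|\IL^{-\alpha}\delta_n|$ in the spirit of~\eqref{eq:incrementsscheme-bis}, together with moment estimates analogous to~\eqref{eq:momentschemepower}--\eqref{eq:momentschemepowerbis} for the limiting scheme $\XX_n^{\Delta t}$, which transfer by the same argument as in Lemma~\ref{lem:incrementsscheme} since $\IL^{-1/2}\Gamma_n$ has finite second moment $\sum_j 1/\lambda_j$.

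The main obstacle is balancing two competing singularities when summing the local errors: the factor $(T-t_{n+1})^{-\alpha}$ from the smoothing of $\overline u$ near the terminal time, and the factor $(n\Delta t)^{-(1-\kappa)}$ from the initial-layer singularity in the bound for $|\IL^{1-\kappa}\XX_n^{\Delta t}|$. For the quadratic remainder in particular, a single choice of $\alpha$ in the smoothing of $D^2\overline u$ does not produce an integrable product of the two singularities; the fix is to decompose $\delta_n$ into the linear part $-\Delta t\IL\IA_{\Delta t}\XX_n^{\Delta t}$ and the nonlinear--stochastic part $\Delta t\IA_{\Delta t}F(\XX_n^{\Delta t},\IL^{-1/2}\Gamma_n)$, to apply $D^2\overline u$ with distinct $\IL^{-\alpha}$ and $\IL^{-\beta}$ weights on the two slots, and to treat the final step $n=N-1$ separately using the direct bound $|D^2\overline u(T,\cdot)|\le C\vvvert\varphi\vvvert_2$ rather than the smoothing bound. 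The loss $\kappa$ in the final rate reflects exactly the limited spatial regularity $x_0\in H^{\kappa/2}$ imposed on the initial condition by Assumption~\ref{ass:init}.
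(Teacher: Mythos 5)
Your strategy is workable and delivers the stated rate, but it is a genuinely different route from the paper's. The paper first compares $\XX_N^{\Delta t}$ with the \emph{deterministic} auxiliary scheme $\overline{\XX}_N^{\Delta t;x_0}$ of~\eqref{eq:auxiliaryscheme} by a discrete telescoping sum in the discrete-time mappings $\overline{u}_n^{\Delta t}$ of~\eqref{eq:ubar} -- the first-order term vanishing by the conditional-expectation identity~\eqref{eq:crucialidentity}, which is exactly your observation that $\Gamma_n$ is independent of $\mathcal{F}_n$ and $\E[F(x,\IL^{-1/2}\Gamma_n)]=\overline{F}(x)$ -- and then invokes the purely deterministic error estimate of Proposition~\ref{propo:error_auxiliaryscheme} for $|\overline{\XX}_N^{\Delta t;x_0}-\overline{\XX}(T)|$. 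You instead telescope directly against the continuous-time flow of~\eqref{eq:averaged}, merging both steps into a single weak-error computation structurally parallel to the paper's proof of Proposition~\ref{propo:error_fixedepsilon}; your discretization defects $(\IA_{\Delta t}-I+\Delta t\IL)\XX_n^{\Delta t}$ and $\Delta t(\IA_{\Delta t}-I)\overline{F}(\XX_n^{\Delta t})$ and the quadratic Taylor remainder are then handled exactly like the terms $e_n^{0,1,\epsilon,\Delta t}$, $e_n^{0,2,\epsilon,\Delta t}$ and $r_n^{\epsilon,\Delta t}$ there, and your concern about balancing the singularities $(T-t_{n+1})^{-\alpha}$ and $(n\Delta t)^{-(1-\kappa)}$ is the right one and is resolvable (the sum is a Riemann approximation of a convergent Beta-type integral for $\kappa>0$). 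The paper's split buys the reuse of Lemma~\ref{lem:ubarDeltat} (needed anyway for Proposition~\ref{propo:error_scheme-limitingscheme}) and confines the singular-integral bookkeeping to the standard deterministic Proposition~\ref{propo:error_auxiliaryscheme}; your route avoids the auxiliary scheme but requires smoothing and boundedness estimates for the continuous-time map $\overline{u}(t,x)=\varphi\bigl(\overline{\XX}(T;t,x)\bigr)$, which are \emph{not} what Lemma~\ref{lem:ubarDeltat} provides (that lemma concerns the discrete $\overline{u}_n^{\Delta t}$): you would have to derive them from the first and second variation equations of~\eqref{eq:averaged}, which is routine but must be done explicitly.

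Two small corrections. First, the rewriting of the time increment through $D\overline{u}(s,x)\cdot \IL x$ must be interpreted via the adjoint smoothing bound $|D\overline{u}(s,x)\cdot h|\le C(T-s)^{-\alpha}|\IL^{-\alpha}h|$, since $\XX_n^{\Delta t}$ does not belong to $D(\IL)$ in general. Second, the $\kappa$-loss in the rate does not come from Assumption~\ref{ass:init}: the statement of the proposition only requires $x_0\in H$ (see the remark following Proposition~\ref{propo:error_auxiliaryscheme}), and the loss is caused by the endpoint singularities of the parabolic smoothing estimates, which would become non-integrable at $\kappa=0$.
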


Combining Propositions~\ref{propo:cv_scheme-limitingscheme} and~\ref{propo:error_limitingscheme-averagedequation} shows that the scheme~\eqref{eq:scheme} is asymptotic preserving.

The proofs of Propositions~\ref{propo:cv_scheme-limitingscheme} and~\ref{propo:error_limitingscheme-averagedequation} are postponed to Section~\ref{sec:proof-AP1} and Section~\ref{sec:proof-AP2} respectively. In fact, those two results are reformulations of~\cite[Theorem~91]{B}, and the proofs are given to make the presentation of the analysis of the scheme~\eqref{eq:scheme} self-contained. In addition, Proposition~\ref{propo:error_limitingscheme-averagedequation} is employed in the proof of the main result of this article.

\subsection{Uniform weak error estimates}\label{sec:UA}

The main result of this article is Theorem~\ref{theo:UA}, which gives uniform weak error estimates for the numerical scheme.
\begin{theo}\label{theo:UA}
For all $\kappa\in(0,\kappa_0)$ and $T\in(0,\infty)$, there exists $C_\kappa(T)\in(0,\infty)$ such that for any function $\varphi:H\to\R$ of class $\mathcal{C}^3$ with bounded derivatives of order $1,2,3$, for all $\Delta t=T/N\in(0,\Delta t_0)$ and $\epsilon\in(0,\epsilon_0)$, one has
\begin{equation}\label{eq:error_UA}
\big|\E[\varphi(\XX_N^{\epsilon,\Delta t})]-\E[\varphi(\XX^\epsilon(T))]\big|\le C_\kappa(T)\Delta t^{\frac13-\kappa}\vvvert\varphi\vvvert_3\bigl(1+|\IL^{\frac{\kappa_0}{2}}x_0|^2\bigr).
\end{equation}
\end{theo}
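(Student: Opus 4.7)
The plan is to exploit the commutative diagram of Section~\ref{sec:AP} through a case-split on the size of $\epsilon$. Four error estimates are combined: (a) a direct weak error bound for fixed $\epsilon$, stated in Proposition~\ref{propo:error_fixedepsilon}, which I expect to take the form $|\E[\varphi(\XX_N^{\epsilon,\Delta t})]-\E[\varphi(\XX^\epsilon(T))]|\le C\,\epsilon^{-1/2}\Delta t^{1/2-\kappa}$ with constant $C$ depending on $T,\kappa,\varphi,x_0$ but not on $\epsilon$ or $\Delta t$; (b) a convergence bound between the scheme and its limiting scheme, $|\E[\varphi(\XX_N^{\epsilon,\Delta t})]-\E[\varphi(\XX_N^{\Delta t})]|\le C\,\epsilon^{1-\kappa}$, also coming from Section~\ref{sec:auxiliary-error}; (c) Proposition~\ref{propo:error_limitingscheme-averagedequation}, which gives $|\E[\varphi(\XX_N^{\Delta t})]-\varphi(\overline{\XX}(T))|\le C\,\Delta t^{1-\kappa}$; and (d) Proposition~\ref{propo:averaging}, which gives $|\E[\varphi(\XX^\epsilon(T))]-\varphi(\overline{\XX}(T))|\le C\,\epsilon^{1-\kappa}$.

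Given these ingredients, fix a threshold $\epsilon^\star=\Delta t^{1/3}$. If $\epsilon\ge\epsilon^\star$, the direct bound (a) yields $|\E[\varphi(\XX_N^{\epsilon,\Delta t})]-\E[\varphi(\XX^\epsilon(T))]|\le C\,(\epsilon^\star)^{-1/2}\Delta t^{1/2-\kappa}\le C\,\Delta t^{1/3-\kappa}$. If on the contrary $\epsilon<\epsilon^\star$, the triangle inequality along the averaged side of the diagram, combined with (b), (c), (d), gives
\begin{equation*}
|\E[\varphi(\XX_N^{\epsilon,\Delta t})]-\E[\varphi(\XX^\epsilon(T))]|\le C\bigl(\epsilon^{1-\kappa}+\Delta t^{1-\kappa}\bigr)\le C\,\Delta t^{(1-\kappa)/3},
\end{equation*}
which can be absorbed into $C\,\Delta t^{1/3-\kappa'}$ after relabeling the arbitrary small parameter $\kappa$. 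The threshold $\epsilon^\star=\Delta t^{1/3}$ is chosen precisely to equate the two exponents, which explains why the order of the method is reduced from the individual orders $1/2$ (direct) and $1$ (averaged) down to $1/3$, as was already observed in~\cite{BR}.

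The main obstacle is producing the direct bound (a) with explicit control of the $\epsilon$-dependence. Following the standard weak error methodology, one introduces the function $u^\epsilon(t,x,y)=\E[\varphi(\XX^\epsilon(T-t,x,y))]$ solving the Kolmogorov equation associated with~\eqref{eq:SPDE}, writes the difference $\E[\varphi(\XX_N^{\epsilon,\Delta t})]-\E[\varphi(\XX^\epsilon(T))]$ as a telescopic sum of one-step errors, Taylor expands $u^\epsilon$ around each discretization point, and uses the Malliavin integration by parts formula~\eqref{eq:MalliavinIBP} to handle the stochastic integrals coming from the modified Euler discretization of the fast Ornstein--Uhlenbeck component. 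Compared with the finite-dimensional analysis in~\cite{BR}, the non trivial step is to quantify the derivatives of $u^\epsilon$ in the fast variable $y$ with a power of $\epsilon$ no worse than $\epsilon^{-1/2}$, while the singularity at short times is compensated by the smoothing estimates~\eqref{eq:smoothing} and the scheme increment bounds of Lemma~\ref{lem:incrementsscheme}. This is precisely the content of Lemma~\ref{lem:uepsilon}, and its careful use is what allows the balancing argument above to produce the uniform rate $\Delta t^{1/3-\kappa}$.
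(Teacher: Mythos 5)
Your proposal follows essentially the same route as the paper: a case split at the threshold $\epsilon=\Delta t^{1/3}$, using the direct estimate of Proposition~\ref{propo:error_fixedepsilon} when $\epsilon\ge\Delta t^{1/3}$ and the triangle inequality through Propositions~\ref{propo:error_scheme-limitingscheme}, \ref{propo:error_limitingscheme-averagedequation} and~\ref{propo:averaging} when $\epsilon\le\Delta t^{1/3}$. The precise forms you guess for the auxiliary bounds differ slightly from the paper's (the direct bound is $(\Delta t/\epsilon)^{1/2-\kappa}+\Delta t/\epsilon$ rather than $\epsilon^{-1/2}\Delta t^{1/2-\kappa}$, and the scheme-to-limiting-scheme bound is $\epsilon\Delta t^{-\kappa}+\Delta t^{1-\kappa}$ rather than $\epsilon^{1-\kappa}$), but these discrepancies do not affect the balancing argument or the resulting rate $\Delta t^{1/3-\kappa}$.
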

The proof of Theorem~\ref{theo:UA} is given in Section~\ref{sec:proofUA} below, as a consequence of several auxiliary error estimates stated in Section~\ref{sec:auxiliary-error} below, which are proved in Section~\ref{sec:proof-UA} using non-trivial arguments and lengthy computations.

\subsection{Auxiliary error estimates}\label{sec:auxiliary-error}

The proof of Theorem~\ref{theo:UA} is based on using several auxiliary error estimates.

Let us first introduce the following auxiliary scheme: for all $\Delta t=T/N\in(0,\Delta t_0)$, $x\in H$ and $n\in\{0,\ldots,N-1\}$, set
\begin{equation}\label{eq:auxiliaryscheme}
\overline{\XX}_{n+1}^{\Delta t;x}=\IA_{\Delta t}\overline{\XX}_n^{\Delta t;x}+\Delta t\IA_{\Delta t}\overline{F}(\overline{\XX}_n^{\Delta t;x})
\end{equation}
with initial value $\overline{\XX}_0^{\Delta t;x}=x\in H$. The scheme~\eqref{eq:auxiliaryscheme} is the standard linear implicit Euler scheme applied to the limiting evolution equation~\eqref{eq:averaged}. One has the following convergence result.
\begin{propo}\label{propo:error_auxiliaryscheme}
For all $\kappa\in(0,\kappa_0)$ and $T\in(0,\infty)$, there exists $C_\kappa(T)\in(0,\infty)$ such that for all $\Delta t\in(0,\Delta t_0)$ one has
\begin{equation}\label{eq:error_auxiliaryscheme}
\big|\overline{\XX}_n^{\Delta t;x_0}-\overline{\XX}(n\Delta t)\big|\le C_\kappa(T)\Delta t^{1-\kappa}(1+\frac{1}{(n\Delta t)^{1-\kappa}}|x_0|).
\end{equation}
\end{propo}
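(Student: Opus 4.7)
The plan is to work entirely at the deterministic level with the mild formulations of~\eqref{eq:averaged} and~\eqref{eq:auxiliaryscheme}, and to exploit the fact that the nonlinearity $\overline{F}$ is globally Lipschitz continuous with $|\overline{F}(x)|\le C(1+|x|)$ (Assumption~\ref{ass:F}), that $\overline{\XX}(t)$ is uniformly bounded on $[0,T]$ in terms of $|x_0|$, and the fact that the same is true for $\overline{\XX}_n^{\Delta t;x_0}$ (which is the content of Lemma~\ref{lem:momentboundscheme} in the deterministic case).

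First, I would write the error as $e_n=\overline{\XX}_n^{\Delta t;x_0}-\overline{\XX}(n\Delta t)=A_n+B_n^{(1)}+B_n^{(2)}+R_n$ where
\[
A_n=[\IA_{\Delta t}^n-e^{-n\Delta t\IL}]x_0,
\]
\[
B_n^{(1)}=\sum_{\ell=0}^{n-1}\int_{t_\ell}^{t_{\ell+1}}[\IA_{\Delta t}^{n-\ell}-e^{-(t_n-s)\IL}]\overline{F}(\overline{\XX}(t_\ell))ds,
\]
\[
B_n^{(2)}=\sum_{\ell=0}^{n-1}\int_{t_\ell}^{t_{\ell+1}}e^{-(t_n-s)\IL}\bigl[\overline{F}(\overline{\XX}(t_\ell))-\overline{F}(\overline{\XX}(s))\bigr]ds,
\]
\[
R_n=\Delta t\sum_{\ell=0}^{n-1}\IA_{\Delta t}^{n-\ell}\bigl[\overline{F}(\overline{\XX}_\ell^{\Delta t;x_0})-\overline{F}(\overline{\XX}(t_\ell))\bigr].
\]
The term $R_n$ will be absorbed at the end via a discrete Gronwall argument, since $\|\IA_{\Delta t}\|_{\mathcal{L}(H)}\le 1$ and $\overline{F}$ is Lipschitz, so $|R_n|\le C\Delta t\sum_{\ell=0}^{n-1}|e_\ell|$.

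The core quantitative input is the operator estimate
\[
\|\IA_{\Delta t}^n-e^{-t_n\IL}\|_{\mathcal{L}(H)}\le C_\kappa\, t_n^{-(1-\kappa)}\Delta t^{1-\kappa}\qquad (\kappa\in(0,1)),
\]
obtained by spectral comparison of $(1+\lambda_j\Delta t)^{-n}$ and $e^{-\lambda_j t_n}$ and interpolating between the uniform bound $\|\IA_{\Delta t}^n-e^{-t_n\IL}\|_{\mathcal{L}(H)}\le 2$ and the order-one smoothing bound $C t_n^{-1}\Delta t$. Applying this to $A_n$ immediately yields $|A_n|\le C_\kappa t_n^{-(1-\kappa)}\Delta t^{1-\kappa}|x_0|$, which is the term producing the $|x_0|/(n\Delta t)^{1-\kappa}$ contribution in~\eqref{eq:error_auxiliaryscheme}. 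For $B_n^{(1)}$, the same estimate combined with the uniform bound on $\overline{F}(\overline{\XX}(t_\ell))$ gives
\[
|B_n^{(1)}|\le C_\kappa(1+|x_0|)\Delta t^{1-\kappa}\int_0^{t_n}(t_n-s)^{-(1-\kappa)}ds\le C_\kappa(T)(1+|x_0|)\Delta t^{1-\kappa}.
\]

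The step I expect to be the most delicate is $B_n^{(2)}$, because the temporal regularity of $\overline{\XX}$ degenerates at $s=0$ when $x_0$ is only assumed to lie in $H$. I would handle the index $\ell=0$ (interval $[0,\Delta t]$) separately using the crude bound $|\overline{F}(\overline{\XX}(s))-\overline{F}(x_0)|\le C(1+|x_0|)$, which contributes $O(\Delta t)(1+|x_0|)$. For $\ell\ge 1$ and $s\in[t_\ell,t_{\ell+1}]$, I would use the mild formula~\eqref{eq:averaged} to write $\overline{\XX}(s)-\overline{\XX}(t_\ell)=(e^{-(s-t_\ell)\IL}-I)\overline{\XX}(t_\ell)+\int_{t_\ell}^s e^{-(s-u)\IL}\overline{F}(\overline{\XX}(u))du$, then combine the regularity inequality~\eqref{eq:regularity} with exponent $1-\kappa$ and the bound $|\IL^{1-\kappa}\overline{\XX}(t_\ell)|\le C(t_\ell^{-(1-\kappa)}|x_0|+1+|x_0|)$ (itself obtained from the mild formula and the smoothing inequality~\eqref{eq:smoothing}). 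Summing $\sum_{\ell\ge 1}\Delta t\cdot\Delta t^{1-\kappa}t_\ell^{-(1-\kappa)}|x_0|=\Delta t^{1-\kappa}|x_0|\sum_\ell \Delta t\cdot t_\ell^{-(1-\kappa)}\le C_\kappa T^\kappa \Delta t^{1-\kappa}|x_0|$ and the analogous bound for the second piece yields $|B_n^{(2)}|\le C_\kappa(T)(1+|x_0|)\Delta t^{1-\kappa}$.

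Finally, I would combine the three bounds and close the estimate by induction on $n$, proving $|e_n|\le C_\kappa(T)\Delta t^{1-\kappa}(1+|x_0|/t_n^{1-\kappa})$ via discrete Gronwall; the induction closes because $\Delta t\sum_{\ell=1}^{n-1}(1+|x_0|/t_\ell^{1-\kappa})\le C_\kappa(T)(1+|x_0|)$, and the resulting $(1+|x_0|)$ factor is absorbed into $(1+|x_0|/(n\Delta t)^{1-\kappa})$ using $(n\Delta t)^{1-\kappa}\le T^{1-\kappa}$. The case $n=0$ is trivial since $e_0=0$.
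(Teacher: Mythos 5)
Your proposal is correct and follows essentially the same route as the paper: the same mild-formulation decomposition (your $A_n$, $B_n^{(1)}$, $B_n^{(2)}$, $R_n$ correspond to the paper's $e_n^{(1)}$, $e_n^{(3)}+e_n^{(4)}$, $e_n^{(5)}$, $e_n^{(2)}$), the same rational-versus-exponential comparison $n|(1+z)^{-n}-e^{-nz}|\le C$ interpolated to order $1-\kappa$, the same increment bound $|\overline{\XX}(t_2)-\overline{\XX}(t_1)|\le C_\kappa(T)(t_2-t_1)^{1-\kappa}(1+t_1^{-1+\kappa}|x_0|)$ obtained from $|\IL^{1-\kappa}\overline{\XX}(t)|\lesssim 1+t^{-1+\kappa}|x_0|$, and the same closing discrete Gronwall argument. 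No gaps.
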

Even if Proposition~\ref{propo:error_auxiliaryscheme} is a standard result in the numerical analysis of parabolic evolution equations, its proof is given in Section~\ref{sec:proof-UA1} for completeness. Note that the initial value $x_0$ is only assumed to satisfy $x_0\in H$ in this statement.

Proposition~\ref{propo:error_fixedepsilon} provides a weak error estimate where the right-hand side is allowed to depend on $\epsilon$. This result provides the consistency of the scheme~\eqref{eq:scheme} for the approximation of $\XX^\epsilon(T)$ for any value of $\epsilon\in(0,\epsilon_0)$. The order of convergence with respect to $\Delta t$ is equal to $1/2$.
\begin{propo}\label{propo:error_fixedepsilon}
For all $\kappa\in(0,\kappa_0)$ and $T\in(0,\infty)$, there exists $C_\kappa(T)\in(0,\infty)$ such that for any function $\varphi:H\to\R$ of class $\mathcal{C}^3$ with bounded derivatives of order $1,2,3$, for all $\Delta t=T/N\in(0,\Delta t_0)$ and $\epsilon\in(0,\epsilon_0)$, one has
\begin{equation}\label{eq:error_fixedepsilon}
\big|\E[\varphi(\XX_N^{\epsilon,\Delta t})]-\E[\varphi(\XX^\epsilon(T))]\big|\le C_\kappa(T)\Bigl(\bigl(\frac{\Delta t}{\epsilon}\bigr)^{\frac12-\kappa}+\frac{\Delta t}{\epsilon}\Bigr)\vvvert\varphi\vvvert_3\bigl(1+|\IL^{\frac{\kappa}{2}}x_0|^2\bigr).
\end{equation}
\end{propo}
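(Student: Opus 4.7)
The plan is to perform an infinite-dimensional weak error analysis in the spirit of \cite{Debussche:11}, relying on the Kolmogorov equation associated with~\eqref{eq:SPDE} and on the auxiliary-process interpretation~\eqref{eq:scheme-interp}--\eqref{eq:auxOU} of the modified Euler scheme. First I would introduce the backward Kolmogorov function
\[
u^\epsilon(t,x,y)=\E\bigl[\varphi(\XX^{\epsilon,x,y}(t))\bigr],\qquad (t,x,y)\in[0,T]\times\HH,
\]
where $(\XX^{\epsilon,x,y},\YY^{\epsilon,x,y})$ denotes the mild solution of~\eqref{eq:SPDE} started from $(x,y)$. Then $u^\epsilon(0,\cdot,\cdot)=\varphi$, and for $t>0$ the function $u^\epsilon$ is, in a mild sense, a solution of
\[
\partial_t u^\epsilon=-\langle\IL x,D_xu^\epsilon\rangle+\langle F(x,y),D_xu^\epsilon\rangle-\tfrac{1}{\epsilon}\langle\IL y,D_yu^\epsilon\rangle+\tfrac{1}{\epsilon}\mathrm{Tr}\bigl(D_y^2u^\epsilon\bigr).
\]
The regularity estimates on $D_xu^\epsilon$, $D_yu^\epsilon$, $D_y^2u^\epsilon$ needed below are precisely the ones collected in Lemma~\ref{lem:uepsilon}.

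Using $\E[\varphi(\XX^\epsilon(T))]=u^\epsilon(T,x_0^\epsilon,y_0^\epsilon)$ and $\E[\varphi(\XX_N^{\epsilon,\Delta t})]=\E[u^\epsilon(0,\XX_N^{\epsilon,\Delta t},\YY_N^{\epsilon,\Delta t})]$, I would decompose the weak error telescopically as $\sum_{n=0}^{N-1}\mathcal{E}_n^\epsilon$ with
\[
\mathcal{E}_n^\epsilon=\E\bigl[u^\epsilon(T-t_{n+1},\XX_{n+1}^{\epsilon,\Delta t},\YY_{n+1}^{\epsilon,\Delta t})-u^\epsilon(T-t_n,\XX_n^{\epsilon,\Delta t},\YY_n^{\epsilon,\Delta t})\bigr],
\]
and analyze each $\mathcal{E}_n^\epsilon$ by separating the slow and fast updates. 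A second-order Taylor expansion in $x$, combined with the identity $\XX_{n+1}^{\epsilon,\Delta t}-\XX_n^{\epsilon,\Delta t}=-\Delta t\,\IA_{\Delta t}\IL\XX_n^{\epsilon,\Delta t}+\Delta t\,\IA_{\Delta t}F(\XX_n^{\epsilon,\Delta t},\YY_{n+1}^{\epsilon,\Delta t})$ and the increment bound~\eqref{eq:incrementsscheme}, produces the discrete counterpart of $\Delta t\bigl(-\langle\IL\cdot,D_xu^\epsilon\rangle+\langle F,D_xu^\epsilon\rangle\bigr)$ up to a remainder of order $\Delta t^{2-\kappa}$. Applying It\^o's formula to $t\mapsto u^\epsilon(T-t,\XX_n^{\epsilon,\Delta t},\IY^\tau(t/\epsilon))$ on $[t_n,t_{n+1}]$, with the modified dynamics~\eqref{eq:auxOU}, yields the discrete counterpart of $-\tfrac{1}{\epsilon}\langle\IL_\tau y,D_yu^\epsilon\rangle+\tfrac{1}{\epsilon}\mathrm{Tr}(Q_\tau D_y^2u^\epsilon)$. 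Subtracting the Kolmogorov equation then exhibits $\mathcal{E}_n^\epsilon$ as the time integral over $[t_n,t_{n+1}]$ of four residual contributions: the $\IL-\IL_\tau$ and $I-Q_\tau$ discrepancies in the fast generator, the $F(\XX_n,\YY_{n+1})$-versus-$F(\XX(t),\YY(t))$ mismatch (handled via the Malliavin integration by parts formula~\eqref{eq:MalliavinIBP} to convert Wiener increments into additional derivatives of $u^\epsilon$), and the implicit-Euler correction $\IA_{\Delta t}-(I-\Delta t\IL)^{-1}$.

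The hard part will be to balance the factor $\tau^\alpha=(\Delta t/\epsilon)^\alpha$ produced by~\eqref{eq:error-eigenvalues} against the blow-up of the $y$-derivatives of $u^\epsilon$ as $t\to T$. Lemma~\ref{lem:uepsilon} provides pointwise bounds of the schematic form $|\IL^\alpha D_yu^\epsilon(T-t,\cdot).h|\le C\epsilon^{1/2-\alpha-\kappa}(T-t)^{-(1/2-\kappa)}|h|$, together with an analogous estimate for $\IL^\alpha D_y^2u^\epsilon$ with exponent $-(1-\kappa)$; choosing $\alpha=1/2-\kappa$ in the contributions coming from $\IL-\IL_\tau$ and $I-Q_\tau$ then produces, after summation in $n$, the dominant $(\Delta t/\epsilon)^{1/2-\kappa}$ term, while taking $\alpha=0$ in the direct $D_y^2u^\epsilon$ contribution of the quadratic variation of the fast increment produces the subdominant $\Delta t/\epsilon$ term. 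The initial-data smoothness $x_0\in H^{\kappa/2}$ from Assumption~\ref{ass:init}, propagated through the scheme by~\eqref{eq:momentschemepowerbis}, is what allows the residual singularities at $t=T$ to be integrated, yielding the final factor $1+|\IL^{\kappa/2}x_0|^2$ in~\eqref{eq:error_fixedepsilon}.
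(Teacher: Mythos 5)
Your plan coincides with the paper's proof: the same Kolmogorov function $u^\epsilon$, the same telescoping decomposition split into a Taylor expansion for the slow update and an It\^o-formula computation along the modified Ornstein--Uhlenbeck dynamics~\eqref{eq:auxOU} for the fast update, the same identification of the residuals ($\IL-\IL_\tau$ and $I-Q_\tau$ discrepancies, the time mismatch in $F$ and $\IL x$ handled via the Malliavin integration by parts formula~\eqref{eq:MalliavinIBP}, and the implicit-Euler correction $\IA_{\Delta t}-I+\Delta t\IL$ --- note your expression $\IA_{\Delta t}-(I-\Delta t\IL)^{-1}$ is a slip for this), and the same balancing of the $\tau^\alpha$ factors from~\eqref{eq:error-eigenvalues} against the singular weights in Lemma~\ref{lem:uepsilon} using $x_0\in H^{\kappa/2}$. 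This is essentially the paper's argument.
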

The proof of Proposition~\ref{propo:error_fixedepsilon} is the most delicate part of the analysis in this article.

Finally, Proposition~\ref{propo:error_scheme-limitingscheme} is a variant of Proposition~\ref{propo:averaging} in discrete-time, and is related to Proposition~\ref{propo:cv_scheme-limitingscheme} above.
\begin{propo}\label{propo:error_scheme-limitingscheme}
For all $\kappa\in(0,\kappa_0)$ and $T\in(0,\infty)$, there exists $C_\kappa(T)\in(0,\infty)$ such that for any function $\varphi:H\to\R$ of class $\mathcal{C}^2$ with bounded first and second order derivatives, for all $\Delta t=T/N\in(0,\Delta t_0)$ and $\epsilon\in(0,\epsilon_0)$, one has
\begin{equation}\label{eq:error_scheme-limitingscheme}
\big|\E[\varphi(\XX_N^{\epsilon,\Delta t})]-\E[\varphi(\XX_N^{\Delta t})]\big|\le C_\kappa(T)\bigl(\frac{\epsilon}{\Delta t^\kappa}+\Delta t^{1-\kappa}\bigr)\vvvert\varphi\vvvert_2(1+|x_0|^2).
\end{equation}
\end{propo}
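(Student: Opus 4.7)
The natural strategy is a backward-Kolmogorov / telescoping comparison between the full scheme~\eqref{eq:scheme} and the limiting scheme~\eqref{eq:limitingscheme}, step by step. For $n\in\{0,\ldots,N\}$ and $x\in H$, introduce the backward semigroup of the limiting scheme
\[
\bar u_n^{\Delta t}(x):=\E\bigl[\varphi(\XX_{N-n}^{\Delta t;x})\bigr],
\]
where $\XX_k^{\Delta t;x}$ is~\eqref{eq:limitingscheme} started at $x$. Then $\bar u_N^{\Delta t}=\varphi$, $\bar u_0^{\Delta t}(x_0)=\E[\varphi(\XX_N^{\Delta t})]$, and the one-step recursion $\bar u_n^{\Delta t}(x)=\E_\Gamma[\bar u_{n+1}^{\Delta t}(\IA_{\Delta t}x+\Delta t\IA_{\Delta t}F(x,\IL^{-\frac12}\Gamma))]$ holds for a fresh cylindrical Gaussian $\Gamma$. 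The regularity of $\bar u_n^{\Delta t}$ (bounds on $D\bar u_n^{\Delta t}$ and $D^2\bar u_n^{\Delta t}$, possibly with an integrable singularity as $n\to N$) is the content of the announced Lemma~\ref{lem:ubarDeltat}. Inserting a fresh $\Gamma_n$ independent of $\mathcal{F}_n=\sigma\{\Gamma_{k,i}:k<n,\,i\in\{1,2\}\}$, the telescoping identity reads
\[
\E[\varphi(\XX_N^{\epsilon,\Delta t})]-\E[\varphi(\XX_N^{\Delta t})]=\sum_{n=0}^{N-1}\E\Bigl[\bar u_{n+1}^{\Delta t}(\XX_{n+1}^{\epsilon,\Delta t})-\bar u_{n+1}^{\Delta t}\bigl(\IA_{\Delta t}\XX_n^{\epsilon,\Delta t}+\Delta t\IA_{\Delta t}F(\XX_n^{\epsilon,\Delta t},\IL^{-\frac12}\Gamma_n)\bigr)\Bigr],
\]
and in each summand the two arguments of $\bar u_{n+1}^{\Delta t}$ differ only in the fast input: $\YY_{n+1}^{\epsilon,\Delta t}$ versus $\IL^{-\frac12}\Gamma_n$.

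A second-order Taylor expansion of $\bar u_{n+1}^{\Delta t}$ around $\IA_{\Delta t}\XX_n^{\epsilon,\Delta t}$ then isolates a first-order contribution
\[
I_n^1=\Delta t\,\E\bigl[D\bar u_{n+1}^{\Delta t}(\IA_{\Delta t}\XX_n^{\epsilon,\Delta t})\cdot\IA_{\Delta t}\bigl(F(\XX_n^{\epsilon,\Delta t},\YY_{n+1}^{\epsilon,\Delta t})-\overline{F}(\XX_n^{\epsilon,\Delta t})\bigr)\bigr]
\]
plus a quadratic remainder. Using the $D^2\bar u_{n+1}^{\Delta t}$ bound from Lemma~\ref{lem:ubarDeltat}, the moment bounds of Lemma~\ref{lem:momentboundsscheme-epsilonDeltat}, the smoothing estimate $\|\IL^\kappa\IA_{\Delta t}\|_{\mathcal{L}(H)}\le C_\kappa\Delta t^{-\kappa}$, and the Lipschitz/linear growth of $F$, the remainder summed in $n$ is bounded by $C_\kappa(T)\Delta t^{1-\kappa}(1+|x_0|^2)$: this is the origin of the $\Delta t^{1-\kappa}$ piece of~\eqref{eq:error_scheme-limitingscheme}.

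The core task is to prove $\bigl|\sum_n I_n^1\bigr|\le C_\kappa(T)\,\epsilon\Delta t^{-\kappa}(1+|x_0|^2)$. The key feature, recalled in Section~\ref{sec:scheme}, is that the modified Euler transition $\mathcal{P}_\tau$ (at scale $\tau=\Delta t/\epsilon$) preserves $\nu=\mathcal{N}(0,\IL^{-1})$ exactly, so $\overline{F}(x)=\int F(x,y)\,d\nu(y)$. Solve the discrete Poisson equation $(I-\mathcal{P}_\tau)\Psi(x,\cdot)=F(x,\cdot)-\overline{F}(x)$ via the Neumann series $\Psi(x,y)=\sum_{k\ge 0}(\mathcal{P}_\tau^kF(x,\cdot)(y)-\overline{F}(x))$, convergent thanks to the spectral gap of $\mathcal{P}_\tau$ on $L^2(\nu)$. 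Writing $F-\overline{F}=\Psi-\mathcal{P}_\tau\Psi$ and using the identity $(\mathcal{P}_\tau\Psi)(\XX_n^{\epsilon,\Delta t},\YY_n^{\epsilon,\Delta t})=\E[\Psi(\XX_n^{\epsilon,\Delta t},\YY_{n+1}^{\epsilon,\Delta t})\mid\mathcal{F}_n]$, each $I_n^1$ splits into a martingale-difference piece together with a difference of $\Psi$ evaluated at consecutive fast states. Abel summation in $n$ converts this into boundary contributions of order $\Delta t\,\|\Psi\|$ plus inner terms involving the slow increments $\XX_{n+1}^{\epsilon,\Delta t}-\XX_n^{\epsilon,\Delta t}$, controlled by Lemma~\ref{lem:incrementsscheme}. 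A direct spectral-gap computation using~\eqref{eq:eigenvalues} yields $\|\Psi(x,\cdot)\|\lesssim \tau^{-1}=\epsilon/\Delta t$ in an appropriate Gaussian Sobolev norm, producing the desired factor of $\epsilon$; the residual $\Delta t^{-\kappa}$ absorbs the short-time singularity of $D\bar u_{n+1}^{\Delta t}$ near the terminal index $n=N-1$.

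\textbf{Main obstacle.} The delicate step is the first-order term: $\XX_n^{\epsilon,\Delta t}$ and $\YY_{n+1}^{\epsilon,\Delta t}$ are jointly measurable in the full history of Gaussian increments and are nontrivially correlated, so one cannot simply condition on $\XX_n^{\epsilon,\Delta t}$ and invoke invariance of $\nu$ under $\mathcal{P}_\tau$ to kill the centered fluctuation $F-\overline{F}$. The Poisson-equation decomposition is the quantitative substitute, translating the qualitative ergodicity of $\mathcal{P}_\tau$ (which is effective precisely when $\tau=\Delta t/\epsilon$ is large, i.e.\ when $\epsilon$ is small) into the explicit $\epsilon$ factor in the bound. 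Additional care is required to control the joint $x$- and $y$-regularity of $\Psi$ in norms compatible with Lemma~\ref{lem:ubarDeltat}, which is where Assumption~\ref{ass:L} on the asymptotics of the eigenvalues $\lambda_j$ intervenes.
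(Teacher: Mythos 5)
Your proposal is correct and follows essentially the same route as the paper: a telescoping sum against the backward functions of the limiting scheme, a Taylor expansion isolating the centered first-order term $I_n^1$, and a discrete Poisson equation for the modified Euler chain at time-step $\tau=\Delta t/\epsilon$ (whose exact preservation of $\nu$ supplies the centering), resolved by Abel summation together with the increment bounds of Lemma~\ref{lem:incrementsscheme} — this is precisely the content of the paper's Lemmas~\ref{lem:Poisson} and~\ref{lem:phin_psin} and of Section~\ref{sec:proof-UA3}, up to the inessential choice of telescoping against the Markov semigroup of~\eqref{eq:limitingscheme} rather than the deterministic averaged scheme~\eqref{eq:auxiliaryscheme}. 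The only quantitative slip is that the spectral gap $(1+\lambda_1\tau)^{-1}$ gives $\|\Psi(x,\cdot)\|\lesssim\max(1,\tau^{-1})$ rather than $\tau^{-1}$, so the first-order contribution is of size $\max(\Delta t,\epsilon)\Delta t^{-\kappa}$ rather than $\epsilon\Delta t^{-\kappa}$ alone — which is still dominated by the right-hand side of~\eqref{eq:error_scheme-limitingscheme}.
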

See~\cite[Lemma~5.4]{BR} for a similar statement in the finite dimensional SDE case. Note that the right-hand side of~\eqref{eq:error_scheme-limitingscheme} goes to infinity when $\Delta t\to 0$, but the upper bound is sufficient for the proof of Theorem~\ref{theo:UA}. Having $\epsilon$ instead of $\frac{\epsilon}{\Delta t^\kappa}$ would not change the result. The presence of $\Delta t^{\kappa}$, with arbitrarily small $\kappa\in(0,\kappa_0)$ is due to arguments on the analysis of parabolic semilinear evolution equations.

\subsection{Proof of Theorem~\ref{theo:UA}}\label{sec:proofUA}

The proof of Theorem~\ref{theo:UA} is a straightforward consequence of auxiliary weak error estimates which have been stated above. Let us first obtain the weak error estimate~\eqref{eq:error_averaging} as a straightforward consequence of the results stated above.
\begin{proof}[Proof of the inequality~\eqref{eq:error_averaging}]
The weak error in the right-hand side of~\eqref{eq:error_averaging} can be decomposed as
\begin{align*}
\big|\E[\varphi(\XX^\epsilon(T))]-\E[\varphi(\overline{\XX}(T))]\big|&\le \big|\E[\varphi(\XX^\epsilon(T))]-\E[\varphi(\XX_N^{\epsilon,\Delta t})]\big|\\
&+\big|\E[\varphi(\XX_N^{\epsilon,\Delta t})]-\E[\varphi(\XX_N^{\Delta t})]\big|\\
&+\big|\E[\varphi(\XX_N^{\Delta t})]-\E[\varphi(\overline{\XX}(T))]\big|,
\end{align*}
where the value of $\Delta t=T/N$ in the right-hand side of the inequality above is arbitrary. Since the value of the left-hand side is independent of $\Delta t$, choosing $N=\epsilon^{-2}+1$ and using the inequalities~\eqref{eq:error_fixedepsilon},~\eqref{eq:error_scheme-limitingscheme} and~\eqref{eq:error_limitingscheme-averagedequation} from Propositions~\ref{propo:error_fixedepsilon},~\ref{propo:error_scheme-limitingscheme} and~\ref{propo:error_limitingscheme-averagedequation} respectively gives the inequality~\eqref{eq:error_averaging}.
\end{proof}

\begin{proof}[Proof of Theorem~\ref{theo:UA}]
The weak error $\E[\varphi(\XX_N^{\epsilon,\Delta t})]-\E[\varphi(\XX^\epsilon(T))]$ can be treated using two different strategies.

On the one hand, one has the inequality~\eqref{eq:error_fixedepsilon} from Proposition~\ref{propo:error_fixedepsilon}:
\[
\big|\E[\varphi(\XX_N^{\epsilon,\Delta t})]-\E[\varphi(\XX^\epsilon(T))]\big|\le C_\kappa(T)\Bigl(\bigl(\frac{\Delta t}{\epsilon}\bigr)^{\frac12-\kappa}+\frac{\Delta t}{\epsilon}\Bigr)\vvvert\varphi\vvvert_3\bigl(1+|\IL^{\frac{\kappa}{2}}x_0|^2\bigr).
\]
On the other hand, one has
\begin{align*}
\big|\E[\varphi(\XX_N^{\epsilon,\Delta t})]-\E[\varphi(\XX^\epsilon(T))]\big|&\le \big|\E[\varphi(\XX_N^{\epsilon,\Delta t})]-\E[\varphi(\XX_N^{\Delta t})]\big|\\
&+\big|\E[\varphi(\XX_N^{\Delta t})]-\varphi(\overline{\XX}(T))\big|\\
&+\big|\varphi(\overline{\XX}(T))-\E[\varphi(\XX^\epsilon(T))]\big|\\
&\le C_\kappa(T)\bigl(\frac{\epsilon}{\Delta t^\kappa}+\Delta t^{1-\kappa}\bigr)\vvvert\varphi\vvvert_2(1+|x_0|^2)\\
&+C_\kappa(T)\vvvert\varphi\vvvert_2\Delta t^{1-\kappa}(1+|x_0|^2)\\
&+C_\kappa(T)\vvvert\varphi\vvvert_3 \epsilon^{1-\kappa}\bigl(1+|\IL^{\frac{\kappa}{2}}x_0|^2\bigr),
\end{align*}
using the inequalities~\eqref{eq:error_scheme-limitingscheme},~\eqref{eq:error_limitingscheme-averagedequation} and~\eqref{eq:error_averaging} from Propositions~\ref{propo:error_scheme-limitingscheme},~\ref{propo:error_limitingscheme-averagedequation} and~\ref{propo:averaging} respectively.

Using the first inequality when $\Delta t^{\frac13}\le \epsilon$ and the second inequality when $\epsilon\le \Delta t^{\frac13}$, one then obtains the inequality~\eqref{eq:error_UA}. Since the parameter $\kappa\in(0,\kappa_0)$ is arbitrarily small, the proof of Theorem~\ref{theo:UA} is thus completed.
\end{proof}

\begin{rem}
If the fast component $\YY^\epsilon$ of the SPDE system~\eqref{eq:SPDE} is discretized using the accelerated exponential Euler scheme, one obtains the scheme
\begin{equation}\label{eq:scheme-expo}
\left\lbrace
\begin{aligned}
\XX_{n+1}^{\epsilon,\Delta t}&=\IA_{\Delta t}\bigl(\XX_n^{\epsilon,\Delta t}+\Delta t F(\XX_n^{\epsilon,\Delta t},\YY_{n+1}^{\epsilon,\Delta t})\bigr)\\
\YY_{n+1}^{\epsilon,\Delta t}&=e^{-\frac{\Delta t}{\epsilon}\IL}\YY_n^{\epsilon,\Delta t}+\sqrt{\frac{2}{\epsilon}}\int_{t_n}^{t_{n+1}}e^{-\frac{t_{n+1}-t}{\epsilon}}dW(s),
\end{aligned}
\right.
\end{equation}
with initial values $\XX_0^{\epsilon,\Delta t}=x_0^\epsilon$ and $\YY_0^{\epsilon,\Delta t}=y_0^\epsilon$.

The result of Theorem~\ref{theo:UA} is valid also for the scheme~\eqref{eq:scheme-expo}. In fact, the proof of Proposition~\ref{propo:error_fixedepsilon} would be simpler for that scheme: for instance the error terms $e_n^{1,\epsilon,\Delta t}$ and $e_n^{2,\epsilon,\Delta t}$ defined by~\eqref{eq:en1} and~\eqref{eq:en2} below would vanish. We thus focus only on the analysis of the scheme~\eqref{eq:scheme}.

Note that the scheme~\eqref{eq:scheme-expo} can be applied only if the eigenvalues $\lambda_j$ and eigenfunctions $e_j$ of the linear operator $\IL$ (see Assumption~\ref{ass:F}) are known (in which case it is appropriate to use a spectral Galerkin discretization in space). On the contrary, the scheme~\eqref{eq:scheme}, based on the modified Euler scheme introduced in~\cite{B}, can be applied without this knowledge, and it is appropriate to combine it with a finite difference discretization in space.
\end{rem}

It thus remains to establish all the auxiliary results used in the proof of Theorem~\ref{theo:UA} above.

\section{Regularity estimates for solutions of Kolmogorov equations}\label{sec:Kolmogorov}

Let $\varphi:H\to\R$ be a continuous mappping. The weak error analysis requires to study the regularity and growth properties of the auxiliary mappings $(t,x,y)\in[0,T]\times \HH\mapsto u^\epsilon(t,x,y)$ and $(n,x)\in\{0,\ldots,N\}\times H\mapsto \overline{u}_n^{\Delta t}(x)$ defined by
\begin{align}
u^\epsilon(t,x,y)&=\E_{x,y}[\varphi(\XX^\epsilon(t))],\label{eq:uepsilon}\\
\overline{u}_n^{\Delta t}(x)&=\varphi(\overline{\XX}_n^{\Delta t;x}),\label{eq:ubar}
\end{align}
where $\bigl(\XX^\epsilon(t),\YY^\epsilon(t)\bigr)_{t\in[0,T]}$ is the mild solution of~\eqref{eq:SPDE} with initial values $\XX^\epsilon(0)=x$ and $\YY^\epsilon(0)=y$ (this is the meaning of the notation $\E_{x,y}[\cdot]$ in~\eqref{eq:uepsilon}), and where $\bigl(\overline{\XX}_n^{\Delta t;x}\bigr)_{n=0,\ldots,N}$ is the solution of~\eqref{eq:auxiliaryscheme}.

The function $u^\epsilon$ is solution of the Kolmogorov equation
\begin{equation}\label{eq:Kolmogorov}
\begin{aligned}
\partial_tu^\epsilon(t,x,y)&=\langle D_xu^\epsilon(t,x,y),-\IL x+F(x,y)\rangle\\
&+\frac{1}{\epsilon}\Bigl(-\langle D_yu^\epsilon(t,x,y),\IL y\rangle+\sum_{j\in\N}D_y^2u^\epsilon(t,x,y).(e_j,e_j)\Bigr),
\end{aligned}
\end{equation}
with initial value $u^\epsilon(0,x,y)=\varphi(x)$. We refer to the monograph~\cite{Cerrai} for results on infinite dimensional Kolmogorov equations. In this section, it would be convenient to introduce a spectral Galerkin approximation procedure to justify all the computations. This is a standard tool, and to simplify the notation this is omitted in the sequel. All the upper bounds are understood to hold uniformly with respect to the auxiliary approximation parameter.

Let us first state regularity results for the mapping $u^\epsilon$.
\begin{lemma}\label{lem:uepsilon}
For all $T\in(0,\infty)$ and $\kappa\in(0,1]$, $\alpha\in[0,1)$, $\alpha_1,\alpha_2\in[0,1)$ such that $\alpha_1+\alpha_2<1$, there exist $C_\kappa(T),C_\alpha(T),C_{\alpha_1,\alpha_2}(T)\in(0,\infty)$ such that for all $\epsilon\in(0,\epsilon_0)$ and all $\varphi:H\to\R$ of class $\mathcal{C}^3$ with bounded derivatives of order $1,2,3$, one has the following inequalities.
\begin{enumerate}
\item For all $t\in(0,T]$, $x,y\in H$ and $h\in H$, one has
\begin{equation}\label{eq:lemuepsilon-1}
|\langle D_xu^\epsilon(t,x,y),h\rangle|+\frac{1}{\epsilon}|\langle D_yu^\epsilon(t,x,y),h\rangle|\le \frac{C_\alpha(T)}{t^{\alpha}}\vvvert\varphi\vvvert_1|\IL^{-\alpha}h|.
\end{equation}
\item For all $t\in(0,T]$, $x,y\in H$ and $h^1,h^2\in H$, one has
\begin{equation}\label{eq:lemuepsilon-2}
\begin{aligned}
|D_x^2u^\epsilon(t,x,y).(h^1,h^2)|&+\frac{1}{\epsilon^{1-\alpha_1}}|D_xD_yu^\epsilon(t,x,y).(h^1,h^2)|+\frac{1}{\epsilon}|D_y^2u^\epsilon(t,x,y).(h^1,h^2)|\\
&\le \frac{C_{\alpha_1,\alpha_2}(T)}{t^{\alpha_1+\alpha_2}}\vvvert\varphi\vvvert_2\|\IL^{-\alpha_1}h^1||\IL^{-\alpha_2}h^2|.
\end{aligned}
\end{equation}
\item For all $t\in(0,T]$, $x,y\in H^\kappa$ and $h\in H^\kappa$, one has
\begin{equation}\label{eq:lemuepsilon-3}
|\partial_t\langle D_xu^\epsilon(t,x,y),h\rangle|\le \frac{1}{\epsilon}\frac{C_\kappa(T)}{t^{1-\kappa}}\vvvert\varphi\vvvert_3(1+|\IL^\kappa x|+|\IL^\kappa y|)|\IL^\kappa h|.
\end{equation}
\end{enumerate}
\end{lemma}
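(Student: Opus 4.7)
The plan is to write $u^\epsilon(t,x,y)=\E[\varphi(\XX^\epsilon(t))]$ with $\XX^\epsilon$ started from $(x,y)$, to differentiate the mild formulation of the SPDE in $x$ and $y$ to obtain variation processes $\eta^{h,x}(t):=D_x\XX^\epsilon(t).h$ and $\eta^{k,y}(t):=D_y\XX^\epsilon(t).k$ (and their second-order analogues), and then to express derivatives of $u^\epsilon$ via the chain rule, for instance $\langle D_xu^\epsilon(t,x,y),h\rangle=\E[D\varphi(\XX^\epsilon(t)).\eta^{h,x}(t)]$. The two key ingredients will be the slow-semigroup smoothing inequality~\eqref{eq:smoothing} and the exponential mixing of the fast Ornstein--Uhlenbeck semigroup, which together control the $y$-dependence.

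For item (1), $\eta^{h,x}$ satisfies a linear Volterra equation with forcing term $e^{-t\IL}h$, and a singular-kernel Gr\"onwall inequality combined with~\eqref{eq:smoothing} yields $|\eta^{h,x}(t)|\le C_\alpha(T)t^{-\alpha}|\IL^{-\alpha}h|$. The $y$-variation $\eta^{k,y}$ has forcing term $\int_0^t e^{-(t-s)\IL}D_yF(\XX^\epsilon,\YY^\epsilon)e^{-s\IL/\epsilon}k\,ds$, so the crucial technical ingredient is the estimate
\[
\int_0^t |e^{-s\IL/\epsilon}k|\,ds\le C_\alpha\,\epsilon\,|\IL^{-\alpha}k|,\qquad \alpha\in[0,1),
\]
which combines smoothing and exponential decay. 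In the spectral representation $|e^{-s\IL/\epsilon}k|^2=\sum_j e^{-2s\lambda_j/\epsilon}k_j^2$ I would split $e^{-2s\lambda_j/\epsilon}=e^{-s\lambda_j/\epsilon}\cdot e^{-s\lambda_j/\epsilon}$, use $(s\lambda_j/\epsilon)^{2\alpha}e^{-s\lambda_j/\epsilon}\le C_\alpha$ to extract the factor $(\epsilon/s)^{2\alpha}|\IL^{-\alpha}k|^2$, retain the residual exponential $e^{-s\lambda_1/\epsilon}$, and then rescale $u=s\lambda_1/(2\epsilon)$ to reduce to the fixed integral $\int_0^\infty u^{-\alpha}e^{-u}du=\Gamma(1-\alpha)$. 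A Gr\"onwall argument then gives $|\eta^{k,y}(t)|\le C_\alpha(T)\epsilon|\IL^{-\alpha}k|$, which is stronger than (hence implies) the bound in item (1) once the constant absorbs $T^\alpha$.

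For item (2), Volterra equations for $D_x^2\XX^\epsilon$, $D_xD_y\XX^\epsilon$, $D_y^2\XX^\epsilon$ have bilinear inhomogeneous terms in the first-order variations, with coefficients $D_x^2F$, $D_xD_yF$, $D_y^2F$. Using the first-order bounds together with the mixed-kernel analogue $\int_0^t s^{-\alpha_1}|e^{-s\IL/\epsilon}k|\,ds\le C\,\epsilon^{1-\alpha_1}|\IL^{-\alpha_2}k|$ (proved by the same splitting/rescaling argument) produces exactly the $\epsilon^{1-\alpha_1}$ scaling for $D_xD_yu^\epsilon$, while the product of two $y$-variation bounds gives $|D_y^2u^\epsilon|\le C\epsilon^2|\cdot|$, which is stronger than the claimed $\epsilon$ bound. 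For item (3), I would use the Kolmogorov equation~\eqref{eq:Kolmogorov}: differentiating in $x$ direction $h$ expresses $\partial_t\langle D_xu^\epsilon,h\rangle$ as a sum of $D_x^2u^\epsilon.(h,-\IL x+F)$, $\frac{1}{\epsilon}D_xD_yu^\epsilon.(h,\IL y)$, the trace $\frac{1}{\epsilon}\sum_j D_xD_y^2u^\epsilon.(h,e_j,e_j)$, and lower-order terms in $D_xu^\epsilon$ evaluated on $D_xF(\cdot)h$ and $-\IL h$. Each is estimated using items (1) and (2) and a third-order analogue, choosing $\alpha_1+\alpha_2=1-\kappa$ to produce the $t^{-(1-\kappa)}$ singularity; the $\epsilon^{-1}$ prefactors of the fast-scale terms are balanced by the $\epsilon^{1-\alpha_1}$ scaling of the mixed derivatives, and the growth factors $|\IL^\kappa x|$, $|\IL^\kappa y|$, $|\IL^\kappa h|$ appear by writing e.g.\ $\IL y=\IL^{1-\alpha_2}\IL^{\alpha_2}y$ with $\alpha_2=1-\kappa$ so that $|\IL^{-\alpha_2}\IL y|=|\IL^\kappa y|$.

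The main obstacle is the family of joint smoothing-plus-decay bounds on $\int_0^t s^{-\beta}|e^{-s\IL/\epsilon}k|\,ds$, since these are precisely what turn the singular $1/\epsilon$ fast-scale terms into finite $\epsilon^{1-\alpha}$ bounds with the correct $t$-singularities; and the convergence of the trace $\sum_j D_xD_y^2u^\epsilon.(h,e_j,e_j)$, which requires an $\IL^{-\alpha}$-weighted third-order estimate combined with the $\lambda_j^{-1}\sim j^{-2}$ decay from Assumption~\ref{ass:L}. The careful bookkeeping of $\epsilon$-powers, $t$-singularities, and $\IL$-weights across first-, second-, and third-order variations is the non-trivial part of the argument.
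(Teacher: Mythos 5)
Your proposal follows essentially the same route as the paper: first- and second-order variation processes obtained from the mild formulation, the key estimate $\int_0^t|e^{-s\IL/\epsilon}k|\,ds\le C_\alpha\epsilon|\IL^{-\alpha}k|$ combining smoothing with the exponential decay of the fast semigroup, Gronwall's lemma, and for item (3) the Kolmogorov equation together with a third-order trace estimate summable thanks to $\lambda_j\sim c_\IL j^2$. One side remark is inaccurate: the claim that $|D_y^2u^\epsilon|\le C\epsilon^2$ only accounts for the contribution $\E[D^2\varphi(\XX^\epsilon(t)).(\eta_x^{(0,h^1)},\eta_x^{(0,h^2)})]$; the other contribution $\E[D\varphi(\XX^\epsilon(t)).\zeta_x]$, in which the second variation is forced by the bilinear term $D^2F.(\eta^{(0,h^1)},\eta^{(0,h^2)})$ and the fast components $e^{-s\IL/\epsilon}h^i$ are small only after time-integration, scales like $\epsilon$ and not $\epsilon^2$ for nonlinear $F$, which is exactly why the lemma asserts only the $\epsilon$ bound. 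This overstatement does not affect the validity of the argument for the bound actually claimed.
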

Lemma~\ref{lem:uepsilon} is a variant of~\cite[Lemma~5.5]{BR} (SDE case), with a more precise analysis of the dependence with respect to the parameter $\epsilon$ of the derivatives with respect to the variable $y$. In addition, in order to obtain the optimal weak order of convergence with respect to $\Delta t$ (with fixed $\epsilon$) in Proposition~\ref{propo:error_fixedepsilon}, one needs to choose $\alpha,\alpha_1,\alpha_2>0$. The bounds of type~\eqref{eq:lemuepsilon-1} and~\eqref{eq:lemuepsilon-2} are specific to the parabolic semilinear evolution equations setting, and are related to the smoothing inequality~\eqref{eq:smoothing}. We refer for instance~\cite{BrehierDebussche} and~\cite{Debussche:11} for similar results (with fixed $\epsilon$) and their use to prove weak error estimates.

Let us now provide regularity results for the mappings $\overline{u}_n^{\Delta t}$ defined by~\eqref{eq:ubar}.
\begin{lemma}\label{lem:ubarDeltat}
For all $T\in(0,\infty)$ and $\kappa\in(0,1]$, there exists $C_\kappa(T)\in(0,\infty)$ such that for all $\Delta t\in(0,\Delta t_0)$, all $x,y\in H$, all $h,k\in H$, all $n\in\{1,\ldots,N\}$, and all $\varphi:H\to\R$ of class $\mathcal{C}^2$ with bounded derivatives of order $1,2$, one has
\begin{align}
|\langle D\overline{u}_n^{\Delta t}(x),h\rangle|&\le \frac{C_\kappa(T)}{(n\Delta t)^{1-\kappa}}\vvvert\varphi\vvvert_1 \bigl(\Delta t|h|+|\IL^{-1+\kappa}h|\bigr)\label{eq:lemubarDeltat_1}\\
|D^2\overline{u}_n^{\Delta t}(x).(h,k)|&\le \frac{C_\kappa(T)}{(n\Delta t)^{1-\kappa}}\vvvert\varphi\vvvert_2 \bigl(\Delta t|h|+|\IL^{-1+\kappa}h|\bigr)|k|\label{eq:lemubarDeltat_2}\\
|\langle D\overline{u}_{n+1}^{\Delta t}(x)-D\overline{u}_{n}^{\Delta t}(x),h\rangle|&\le \frac{C_\kappa(T)\Delta t^{1-\kappa}}{(n\Delta t)^{1-\kappa}}\vvvert\varphi\vvvert_2 (1+|x|)|h|.\label{eq:lemubarDeltat_3}
\end{align}
Note that with $\kappa=1$, the inequalities~\eqref{eq:lemubarDeltat_1} and~\eqref{eq:lemubarDeltat_2} provide the following result:
\begin{equation}\label{eq:lemubarDeltat_4}
\underset{0\le \Delta t\le \Delta t_0}\sup~\underset{0\le n\le N}\sup~\vvvert \overline{u}_n\vvvert_2\le C(T)\vvvert\varphi\vvvert_2.
\end{equation}
\end{lemma}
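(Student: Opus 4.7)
My plan is to apply the chain rule to $\overline{u}_n^{\Delta t}(x)=\varphi(\overline{\XX}_n^{\Delta t;x})$ and analyze the derivatives of the deterministic flow. Introducing the tangent processes $\eta_n^h=D_x\overline{\XX}_n^{\Delta t;x}\cdot h$ and $\zeta_n^{h,k}=D_x^2\overline{\XX}_n^{\Delta t;x}.(h,k)$, the chain rule yields $\langle D\overline{u}_n^{\Delta t}(x),h\rangle=D\varphi(\overline{\XX}_n^{\Delta t;x})\cdot \eta_n^h$ and $D^2\overline{u}_n^{\Delta t}(x).(h,k)=D^2\varphi(\overline{\XX}_n^{\Delta t;x}).(\eta_n^h,\eta_n^k)+D\varphi(\overline{\XX}_n^{\Delta t;x})\cdot \zeta_n^{h,k}$. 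Differentiating~\eqref{eq:auxiliaryscheme} with respect to $x$ gives the linear iteration $\eta_{n+1}^h=\IA_{\Delta t}\eta_n^h+\Delta t\IA_{\Delta t}D\overline{F}(\overline{\XX}_n^{\Delta t;x})\eta_n^h$ with $\eta_0^h=h$, which by a discrete variation of constants formula reads
\[
\eta_n^h=\IA_{\Delta t}^n h+\Delta t\sum_{\ell=0}^{n-1}\IA_{\Delta t}^{n-\ell}D\overline{F}(\overline{\XX}_\ell^{\Delta t;x})\eta_\ell^h,
\]
and an analogous affine formula holds for $\zeta_n^{h,k}$ with the additional source term $\Delta t\IA_{\Delta t}D^2\overline{F}(\overline{\XX}_n^{\Delta t;x}).(\eta_n^h,\eta_n^k)$.

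The core of the proof is establishing the key estimate $|\eta_n^h|\le C_\kappa(T)(n\Delta t)^{-(1-\kappa)}\bigl(\Delta t|h|+|\IL^{-1+\kappa}h|\bigr)$ for $n\ge 1$, from which~\eqref{eq:lemubarDeltat_1} follows by $|\langle D\overline{u}_n^{\Delta t}(x),h\rangle|\le\vvvert\varphi\vvvert_1|\eta_n^h|$. The homogeneous term $\IA_{\Delta t}^n h$ is handled by the smoothing bound~\eqref{eq:smoothingnum}, producing the $|\IL^{-1+\kappa}h|$ piece. For the convolution term, I iterate the Duhamel formula once, substituting $\eta_\ell^h=\IA_{\Delta t}^\ell h+R_\ell$: the $\ell=0$ contribution of the leading part yields the $\Delta t|h|/(n\Delta t)^{1-\kappa}$ factor, while for $\ell\ge 1$ the combined smoothing of $\IA_{\Delta t}^{n-\ell}$ and $\IA_{\Delta t}^\ell$ together with the beta-type bound $\Delta t\sum_{\ell=1}^{n-1}\tfrac{1}{((n-\ell)\Delta t)^{1-\kappa}(\ell\Delta t)^{1-\kappa}}\le C_\kappa(n\Delta t)^{2\kappa-1}$ gives the correct decay. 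The residual involving $R_\ell$ is absorbed by a discrete singular Gronwall inequality. I expect this combinatorial estimate, balancing smoothing gains on both sides of the bounded operator $D\overline{F}$, to be the main technical obstacle.

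Estimate~\eqref{eq:lemubarDeltat_2} follows analogously: bound $|\zeta_n^{h,k}|$ by inserting the estimate on $\eta$ into its Duhamel formula, use $|D\overline{F}|,|D^2\overline{F}|\le C$ from Assumption~\ref{ass:F}, and close the resulting recursion with the same singular Gronwall argument, keeping one of the factors in $|\eta_\ell^h||\eta_\ell^k|$ at the crude Lipschitz level $|\eta_\ell^k|\le e^{CT}|k|$ so that only the asymmetric decay in $h$ appears in the final bound. Plugging into the chain rule decomposition of $D^2\overline{u}_n^{\Delta t}$ then gives~\eqref{eq:lemubarDeltat_2}.

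For the temporal regularity~\eqref{eq:lemubarDeltat_3}, I exploit the semigroup identity $\overline{u}_{n+1}^{\Delta t}(x)=\overline{u}_n^{\Delta t}(\Phi(x))$ with $\Phi(x)=\IA_{\Delta t}x+\Delta t\IA_{\Delta t}\overline{F}(x)$. Differentiating and adding and subtracting $\langle D\overline{u}_n^{\Delta t}(x),\IA_{\Delta t}h\rangle$, I split $\langle D\overline{u}_{n+1}^{\Delta t}(x)-D\overline{u}_n^{\Delta t}(x),h\rangle$ into three pieces: $\langle D\overline{u}_n^{\Delta t}(\Phi(x))-D\overline{u}_n^{\Delta t}(x),\IA_{\Delta t}h\rangle$, controlled via the mean value theorem and~\eqref{eq:lemubarDeltat_2} together with the elementary bound $|\IL^{-1+\kappa}(\Phi(x)-x)|\le C_\kappa\Delta t^{1-\kappa}(1+|x|)$; $\langle D\overline{u}_n^{\Delta t}(x),(\IA_{\Delta t}-I)h\rangle$, controlled by~\eqref{eq:lemubarDeltat_1} and $|\IL^{-1+\kappa}(\IA_{\Delta t}-I)h|\le C_\kappa\Delta t^{1-\kappa}|h|$; and the explicit order-$\Delta t$ term $\Delta t\langle D\overline{u}_n^{\Delta t}(\Phi(x)),\IA_{\Delta t}D\overline{F}(x)h\rangle$, controlled by~\eqref{eq:lemubarDeltat_1} applied to $\IA_{\Delta t}D\overline{F}(x)h$. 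Summing the three contributions yields~\eqref{eq:lemubarDeltat_3}.
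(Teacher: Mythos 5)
Your proposal is correct and follows essentially the same route as the paper: chain rule through the tangent processes $\eta_n$ and $\zeta_n$, the discrete variation-of-constants formula, the smoothing bound~\eqref{eq:smoothingnum} applied to $\IA_{\Delta t}^n h$, and a discrete Gronwall argument, with the same type of splitting for~\eqref{eq:lemubarDeltat_3} (you use three pieces, the paper four). The only real difference is that the paper avoids your beta-function convolution and singular Gronwall step by bounding the remainder $\tilde{\eta}_n=\eta_n^h-\IA_{\Delta t}^n h$ merely uniformly in $n$, which already suffices because the prefactor $(n\Delta t)^{-(1-\kappa)}$ is bounded below by $T^{-(1-\kappa)}$ on the relevant range.
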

Lemma~\ref{lem:ubarDeltat} is a variant of~\cite[Lemma~5.7]{BR} (SDE case), where like in Lemma~\ref{lem:uepsilon} one needs $1-\kappa\neq 0$. The proof employs the discrete time version~\eqref{eq:smoothingnum} of the smoothing inequality~\eqref{eq:smoothing}. See also~\cite[Lemma~7.2]{B:2013} for a variant of Lemma~\ref{lem:ubarDeltat} (analysis of HMM schemes in the SPDE case).

The proof of Lemma~\ref{lem:uepsilon} is given in Section~\ref{sec:proof-uepsilon}, whereas the proof of Lemma~\ref{lem:ubarDeltat} is given in Section~\ref{sec:proof-ubarDeltat}.

\subsection{Proof of Lemma~\ref{lem:uepsilon}}\label{sec:proof-uepsilon}

Recall the notation $\HH=H\times H$. For all ${\bf h}=(h_x,h_y)\in \HH$, one has the following expression for the first-order derivatives:
\begin{align*}
Du^\epsilon(t,x,y).{\bf h}&=D_xu^\epsilon(t,x,y).h_x+D_yu^\epsilon(t,x,y).h_y\\
&=\E_{x,y}[D\varphi(\XX^\epsilon(t,x,y)).\eta_x^{\epsilon,{\bf h}}(t)]
\end{align*}
where $t\in[0,T]\mapsto \eta^{{\bf h}}(t)=(\eta_x^{\epsilon,{\bf h}}(t),\eta_y^{\epsilon,{\bf h}}(t))\in \HH$ is solution of
\[
\left\lbrace
\begin{aligned}
\frac{d\eta_x^{\epsilon,{\bf h}}(t)}{dt}&=-\IL\eta_x^{\epsilon,{\bf h}}(t)+D_xF(\XX^\epsilon(t),\YY^\epsilon(t)).\eta_x^{\epsilon,{\bf h}}(t)+D_yF(\XX^\epsilon(t),\YY^\epsilon(t)).\eta_y^{\epsilon,{\bf h}}(t)\\
\frac{d\eta_y^{\epsilon,{\bf h}}(t)}{dt}&=-\frac{1}{\epsilon}\IL\eta_y^{\epsilon,{\bf h}}(t),
\end{aligned}
\right.
\]
with initial values $\eta_x^{\epsilon,{\bf h}}(0)=h_x$ and $\eta_y^{\epsilon,{\bf h}}(0)=h_y$.

For all ${\bf h}^1=(h_x^1,h_y^1)\in\HH$ and ${\bf h}^2=(h_x^2,h_y^2)\in\HH$, one has the following expression for the second-order derivatives:
\begin{align*}
D^2u^\epsilon(t,x,y).({\bf h}^1,{\bf h}^2)&=D_x^2u^\epsilon(t,x,y).(h_x^1,h_x^2)+D_y^2u^\epsilon(t,x,y).(h_y^1,h_y^2)\\
&+D_xD_yu^\epsilon(t,x,y).(h_x^1,h_y^2)+D_yD_xu^\epsilon(t,x,y).(h_y^1,h_x^2)\\
&=\E_{x,y}[D\varphi(\XX^\epsilon(t)).\zeta_x^{\epsilon,{\bf h}^1,{\bf h}^2}(t)]+\E_{x,y}[D^2\varphi(\XX^\epsilon(t)).(\eta_x^{\epsilon,{\bf h}^1}(t),\eta_x^{\epsilon,{\bf h}^2}(t))],
\end{align*}
where $t\in[0,T]\mapsto \zeta^{\epsilon,{\bf h}^1,{\bf h}^2}(t)=(\zeta_x^{\epsilon,{\bf h}^1,{\bf h}^2}(t),\zeta_y^{\epsilon,{\bf h}^1,{\bf h}^2}(t))\in\HH$ is solution of
\[
\left\lbrace
\begin{aligned}
\frac{d\zeta_x^{\epsilon,{\bf h}^1,{\bf h}^2}(t)}{dt}&=-\IL\zeta_x^{\epsilon,{\bf h}^1,{\bf h}^2}(t)+D_xF(\XX^{\epsilon}(t),\YY^{\epsilon}(t)).\zeta_x^{\epsilon,{\bf h}^1,{\bf h}^2}(t)+D_yF(\XX^\epsilon(t),\YY^\epsilon(t)).\zeta_y^{\epsilon,{\bf h}^1,{\bf h}^2}(t)\\
&+D^2F(\XX^\epsilon(t),\YY^\epsilon(t)).(\eta^{\epsilon,{\bf h}^1}(t),\eta^{\epsilon,{\bf h}^2}(t))\\
\frac{d\zeta_y^{\epsilon,{\bf h}^1,{\bf h}^2}(t)}{dt}&=0,
\end{aligned}
\right.
\]
with initial values $\zeta_x^{\epsilon,{\bf h}^1,{\bf h}^2}(0)=\zeta_y^{\epsilon,{\bf h}^1,{\bf h}^2}(0)=0$. In the expressions above, the fact that the initial value $=u^\epsilon(0,x,y)=\varphi(x)$ is independent of $y$ is used.

\begin{proof}[Proof of the inequality~\eqref{eq:lemuepsilon-1}]
Let $\alpha\in[0,1)$.

Observe that for all $t\ge 0$, one has $\eta_y^{{\bf h}}(t)=e^{-\frac{t}{\epsilon}\IL}h_y$. As a consequence, using the semigroup property and the smoothing inequality~\eqref{eq:smoothing}, for all $t\in(0,\infty)$, one obtains
\begin{equation}\label{eq:eta-y_bound}
|\eta_y^{\epsilon,{\bf h}}(t)|\le C_\alpha \frac{\epsilon^{\alpha}}{t^{\alpha}}|e^{-\frac{t}{2\epsilon}\IL}h_y|\le C_\alpha e^{-\frac{\lambda_1 t}{2\epsilon}}\frac{\epsilon^{\alpha}}{t^{\alpha}}|\IL^{-\alpha}h_y|.
\end{equation}

Introduce an auxiliary process defined by $\tilde{\eta}_x^{\epsilon,{\bf h}}(t)=\eta_x^{\epsilon,{\bf h}}(t)-e^{-t\IL}h_x$ for all $t\ge 0$. Using the mild formulation
\begin{align*}
\eta_x^{\epsilon,{\bf h}}(t)=e^{-t\IL}h_x&+\int_0^te^{-(t-s)\IL}D_xF(\XX^\epsilon(s),\YY^\epsilon(s)).{\eta}_x^{\epsilon,{\bf h}}(s)ds\\
&+\int_{0}^{t}e^{-(t-s)\IL}D_yF(\XX^\epsilon(s),\YY^\epsilon(s)).{\eta}_y^{\epsilon,{\bf h}}(s)ds,
\end{align*}
one obtains, for all $t\in[0,T]$,
\begin{align*}
\tilde{\eta}_x^{\epsilon,{\bf h}}(t)&=\int_{0}^{t}e^{-(t-s)\IL}D_xF(\XX^\epsilon(s),\YY^\epsilon(s)).\tilde{\eta}_x^{\epsilon,{\bf h}}(s)ds\\
&+\int_{0}^{t}e^{-(t-s)\IL}D_xF(\XX^\epsilon(s),\YY^\epsilon(s)).e^{-s\IL}h_x ds\\
&+\int_{0}^{t}e^{-(t-s)\IL}D_yF(\XX^\epsilon(s),\YY^\epsilon(s)).e^{-\frac{s}{\epsilon}\IL}h_y ds.
\end{align*}
Since the mappings $D_xF$ and $D_yF$ are bounded (Assumption~\ref{ass:F}), using the smoothing inequality~\eqref{eq:smoothing} and the bound above, one then has
\begin{align*}
|\tilde{\eta}_x^{\epsilon,{\bf h}}(t)|&\le C\int_{0}^{t}|\tilde{\eta}_x^{\epsilon,{\bf h}}(s)|ds+C\int_{0}^{t}|e^{-s\IL}h_x|ds+C\int_{0}^{t}|e^{-\frac{s}{\epsilon}\IL}h_y|ds\\
&\le C\int_{0}^{t}|\tilde{\eta}_x^{{\bf h}}(s)|ds+C_\alpha\int_0^{t}s^{-\alpha}ds |\IL^{-\alpha}h_x|+C_\alpha\int_{0}^{t}e^{-\frac{\lambda_1 s}{2\epsilon}}\frac{\epsilon^{\alpha}}{s^{\alpha}}|\IL^{-\alpha}h_y|ds\\
&\le C\int_{0}^{t}|\tilde{\eta}_x^{{\bf h}}(s)|ds+C_\alpha(T)\bigl(|\IL^{-\alpha}h_x|+\epsilon|\IL^{-\alpha}h_y|\bigr)
\end{align*}
with $C_\alpha(T)=\int_0^{T}s^{-\alpha}ds+\int_{0}^{\infty}e^{-\frac{\lambda_1 s}{2}}s^{-\alpha}ds<\infty$, by a straightforward change of variables argument in the integral.

Applying Gronwall's inequality, one then obtains
\[
\underset{0\le t\le T}\sup~|\tilde{\eta}_x^{\epsilon,{\bf h}}(t)|\le C_\alpha(T)\bigl(|\IL^{-\alpha}h_x|+\epsilon|\IL^{-\alpha}h_y|\bigr),
\]
with $C_\alpha(T)\in(0,\infty)$, independent of $\epsilon\in(0,\epsilon_0)$. Therefore, for all $t\in(0,T]$, one obtains the inequality
\begin{equation}\label{eq:eta-x_bound}
|\eta_x^{\epsilon,{\bf h}}(t)|\le C_\alpha(T)\bigl(\frac{1}{t^{\alpha}}|\IL^{-\alpha}h_x|+\epsilon|\IL^{-\alpha}h_y|\bigr).
\end{equation}
Since $D\varphi$ is bounded, one finally obtains the inequality
\[
|Du^\epsilon(t,x,y).{\bf h}|\le C_\kappa(T)\vvvert\varphi\vvvert_1\bigl(\frac{1}{t^{\alpha}}|\IL^{-\alpha}h_x|+\epsilon|\IL^{-\alpha}h_y|\bigr),
\]
for all $t\in(0,T]$. Considering the cases ${\bf h}=(h_x,h_y)=(h,0)$ and ${\bf h}=(h_x,h_y)=(0,h)$ then concludes the proof of the inequality~\eqref{eq:lemuepsilon-1}.
\end{proof}

\begin{proof}[Proof of the inequality~\eqref{eq:lemuepsilon-2}]
Let $\alpha_1,\alpha_2\in[0,1)$ be such that $\alpha_1+\alpha_2<1$.

Observe that $\zeta_y^{{\bf h}^1,{\bf h}^2}(t)=0$ for all $t\ge 0$, and that, using a mild formulation, one has, for all $t\ge 0$,
\begin{align*}
\zeta_x^{\epsilon,{\bf h}^1,{\bf h}^2}(t)&=\int_{0}^{t}e^{-(t-s)\IL}D_xF(\XX^{\epsilon}(s),\YY^{\epsilon}(s)).\zeta_x^{\epsilon,{\bf h}^1,{\bf h}^2}(s)ds\\
&+\int_{0}^{t}e^{-(t-s)\IL}D^2F(\XX^\epsilon(s),\YY^\epsilon(s)).(\eta^{\epsilon,{\bf h}^1}(s),\eta^{\epsilon,{\bf h}^2}(s))ds.
\end{align*}
Using the inequalities~\eqref{eq:eta-x_bound} and~\eqref{eq:eta-y_bound}, one then obtains
\begin{align*}
|\zeta_x^{\epsilon,{\bf h}^1,{\bf h}^2}(t)|&\le C\int_{0}^{t}|\zeta_x^{\epsilon,{\bf h}^1,{\bf h}^2}(s)|ds+C\int_{0}^{t}|\eta^{\epsilon,{\bf h}^1}(s)| |\eta^{\epsilon,{\bf h}^2}(s)|ds\\
&\le C\int_{0}^{t}|\zeta_x^{\epsilon,{\bf h}^1,{\bf h}^2}(s)|ds\\
&+C_{\alpha_1,\alpha_2}(T)\int_0^t \Bigl(s^{-\alpha_1}|\IL^{-\alpha_1}h_x^1|+\bigl(\epsilon+\frac{\epsilon^{\alpha_1}}{s^{\alpha_1}}e^{-\frac{\lambda_1 s}{2\epsilon}}\bigr)|\IL^{-\alpha_1}h_y^1|\Bigr)\\
&\hspace{3cm}\Bigl(s^{-\alpha_2}|\IL^{-\alpha_2}h_x^2|+\bigl(\epsilon+\frac{\epsilon^{\alpha_2}}{s^{\alpha_2}}e^{-\frac{\lambda_1 s}{2\epsilon}}\bigr)|\IL^{-\alpha_2}h_y^2|\Bigr)ds\\
&\le C\int_{0}^{t}|\zeta_x^{\epsilon,{\bf h}^1,{\bf h}^2}(s)|ds\\
&+C_{\alpha_1,\alpha_2}(T)\Bigl(|\IL^{-\alpha_1}h_x^1||\IL^{-\alpha_2}h_x^2|+\epsilon|\IL^{-\alpha_1}h_y^1||\IL^{-\alpha_2}h_y^2|\Bigr)\\
&+C_{\alpha_1,\alpha_2}(T)\Bigl(\epsilon^{1-\alpha_1}|\IL^{-\alpha_1}h_x^1||\IL^{-\alpha_2}h_y^2|+\epsilon^{1-\alpha_2}|\IL^{-\alpha_1}h_y^1||\IL^{-\alpha_2}h_x^2|\Bigr),
\end{align*}
where $C_{\alpha_1,\alpha_2}(T)\in(0,\infty)$ is independent of $\epsilon\in(0,\epsilon_0)$, using change of variables arguments in the integrals, like in the proof of the inequality~\eqref{eq:lemuepsilon-1} above.

Applying Gronwall's lemma then yields the inequality
\begin{align*}
\underset{0\le t\le T}\sup~|\zeta_x^{\epsilon,{\bf h}^1,{\bf h}^2}(t)|&\le C_{\alpha_1,\alpha_2}(T)\Bigl(|\IL^{-\alpha_1}h_x^1||\IL^{-\alpha_2}h_x^2|+\epsilon|\IL^{-\alpha_1}h_y^1||\IL^{-\alpha_2}h_y^2|\Bigr)\\
&+C_{\alpha_1,\alpha_2}(T)\Bigl(\epsilon^{1-\alpha_1}|\IL^{-\alpha_1}h_x^1||\IL^{-\alpha_2}h_y^2|+\epsilon^{1-\alpha_2}|\IL^{-\alpha_1}h_y^1||\IL^{-\alpha_2}h_x^2|\Bigr),
\end{align*}
for all $t\in[0,T]$. Using that inequality and~\eqref{eq:eta-x_bound}, one then obtains
\begin{align*}
|D^2u^\epsilon&(t,x,y).({\bf h}^1,{\bf h}^2)|\le \vvvert\varphi\vvvert_1|\zeta_x^{\epsilon,{\bf h}^1,{\bf h}^2}(t)|+\vvvert\varphi\vvvert_2|\eta_x^{\epsilon,{\bf h}^1}(t)||\eta_x^{\epsilon,{\bf h}^2}(t)|\\
&\le C_{\alpha_1,\alpha_2}(T)\vvvert\varphi\vvvert_1\Bigl(|\IL^{-\alpha_1}h_x^1||\IL^{-\alpha_2}h_x^2|+\epsilon|\IL^{-\alpha_1}h_y^1||\IL^{-\alpha_2}h_y^2|\Bigr)\\
&+C_{\alpha_1,\alpha_2}(T)\vvvert\varphi\vvvert_1\Bigl(\epsilon^{1-\alpha_1}|\IL^{-\alpha_1}h_x^1||\IL^{-\alpha_2}h_y^2|+\epsilon^{1-\alpha_2}|\IL^{-\alpha_1}h_y^1||\IL^{-\alpha_2}h_x^2|\Bigr)\\
&+C_\kappa(T)\vvvert\varphi\vvvert_2 \bigl(\frac{1}{t^{\alpha_1}}|\IL^{-\alpha_1}h_x^1|+\epsilon|\IL^{-\alpha_1}h_y^1|\bigr)\bigl(\frac{1}{t^{\alpha_2}}|\IL^{-\alpha_2}h_x^2|+\epsilon|\IL^{-\alpha_2}h_y^2|\bigr).
\end{align*}
Let $h,k\in H$. Considering the case with ${\bf h}^1=(h^1,0)$ and ${\bf h}^2=(h^2,0)$, one obtains
\begin{align*}
|D_x^2u^\epsilon(t,x,y).(h^1,h^2)|&=|D^2u^\epsilon(t,x,y).((h^1,0),(h^2,0))|\\
&\le \frac{C_{\alpha_1,\alpha_2}(T)}{t^{\alpha_1+\alpha_2}}\vvvert\varphi\vvvert_2 |\IL^{-\alpha_1}h^1||\IL^{-\alpha_2}h^2|.
\end{align*}
Similarly, considering the case with ${\bf h}^1=(h^1,0)$ and ${\bf h}^2=(0,h^2)$, one obtains
\begin{align*}
|D_xD_yu^\epsilon(t,x,y).(h^1,h^2)|&=|D^2u^\epsilon(t,x,y).((h^1,0),(0,h^2))|\\
&\le \epsilon^{1-\alpha_1}\frac{C_{\alpha_1,\alpha_2}(T)}{t^{\alpha_1}}\vvvert\varphi\vvvert_2 |\IL^{-\alpha_1}h^1||\IL^{-\alpha_2}h^2|,
\end{align*}
and considering the case with ${\bf h}^1=(0,h^1)$ and ${\bf h}^2=(0,h^2)$, one obtains
\begin{align*}
|D_y^2u^\epsilon(t,x,y).(h^1,h^2)|&=|D^2u^\epsilon(t,x,y).((0,h^1),(0,h^2))|\\
&\le \epsilon C_{\alpha_1,\alpha_2}(T)\vvvert\varphi\vvvert_2 |\IL^{-\alpha_1}h^1||\IL^{-\alpha_2}h^2|.
\end{align*}
The proof of the inequality~\eqref{eq:lemuepsilon-2} is thus completed.
\end{proof}

\begin{proof}[Proof of the inequality~\eqref{eq:lemuepsilon-3}]
Using the fact that $u^\epsilon$ solves the Kolmogorov equation~\eqref{eq:Kolmogorov}, one has
\begin{align*}
\partial_t\langle D_xu^\epsilon(t,x,y),h\rangle&=\langle D_x\partial_tu^\epsilon(t,x,y),h\rangle\\
&=\langle D_xu^\epsilon(t,x,y),-\IL h+D_xF(x,y).h\rangle\\
&+D_x^2u^\epsilon(t,x,y).(-\IL x+F(x,y),h)\\
&-\frac{1}{\epsilon}D_xD_yu^\epsilon(t,x,y).(h,\IL y)+\frac{1}{\epsilon}\sum_{j\in\N}D_xD_y^2u^\epsilon(t,x,y).(h,e_j,e_j).
\end{align*}
Using the inequality~\eqref{eq:lemuepsilon-1}, one obtains the upper bound
\begin{equation}\label{eq:lemuepsilon-3-1}
|\langle D_xu^\epsilon(t,x,y),-\IL h+D_xF(x,y).h\rangle|\le \frac{C_\kappa(T)}{t^{1-\kappa}}\vvvert\varphi\vvvert_1 |\IL^\kappa h|.
\end{equation}
Using the inequality~\eqref{eq:lemuepsilon-2} and the linear growth property of $F$, one obtains the upper bounds
\begin{equation}\label{eq:lemuepsilon-3-2}
|D_x^2u^\epsilon(t,x,y).(-\IL x+F(x,y),h)|\le \frac{C_\kappa(T)}{t^{1-\kappa}}\vvvert\varphi\vvvert_2(1+|\IL^\kappa x|+|y|)|h|
\end{equation}
and
\begin{equation}\label{eq:lemuepsilon-3-3}
|D_xD_yu^\epsilon(t,x,y).(h,\IL y)|\le \frac{C_\kappa(T)}{t^{1-\kappa}}\vvvert\varphi\vvvert_2|\IL^\kappa y||h|.
\end{equation}
In order to deal with the last term in the expression above, one needs to prove the following upper bound: for all $t\in(0,T]$, $x,y\in H$ and ${\bf h}^1,{\bf h}^2,{\bf h}^3\in\HH$, one has
\begin{equation}\label{eq:lemuepsilon-aux}
|D^3u^\epsilon(t,x,y).({\bf h}^1,{\bf h}^2,{\bf h}^3)|\le \frac{C_\kappa(T)}{t^{1-\kappa}}|{\bf h}^1|(|\IL^{-1+\kappa}h_x^2|+|\IL^{-1+\kappa}h_y^2|)|{\bf h}^3|,
\end{equation}
with ${\bf h}^2=(h_x^2,h_y^2)$. The proof of the auxiliary inequality~\eqref{eq:lemuepsilon-aux} is similar to the proofs of the inequalities~\eqref{eq:lemuepsilon-1} and~\eqref{eq:lemuepsilon-2}, but there is a crucial difference which makes the arguments simpler: the inequality~\eqref{eq:lemuepsilon-aux} states bounds which are uniform with respect to $\epsilon$, whereas for the two other inequalities the dependence with respect to $\epsilon$ is made more explicit. A version of~\eqref{eq:lemuepsilon-aux} with a similar analysis of the dependence with respect to $\epsilon$ may be obtained but is useless for the proof of the inequality~\eqref{eq:lemuepsilon-3} and is therefore omitted.

Let us give the proof of the auxiliary inequality~\eqref{eq:lemuepsilon-aux}. One has the expression
\begin{align*}
D^3u^\epsilon(t,x,y).({\bf h}^1,{\bf h^2},{\bf h^3})&=\E_{x,y}[D\varphi(\XX^{\epsilon}(t)).\xi_x^{\epsilon,{\bf h}^1,{\bf h}^2,{\bf h}^3}(t)]\\
&+\E_{x,y}[D^2\varphi(\XX^{\epsilon}(t)).(\eta_x^{\epsilon,{\bf h}^1}(t),\zeta_x^{\epsilon,{\bf h}^2,{\bf h}^3}(t))]\\
&+\E_{x,y}[D^2\varphi(\XX^{\epsilon}(t)).(\eta_x^{\epsilon,{\bf h}^2}(t),\zeta_x^{\epsilon,{\bf h}^3,{\bf h}^1}(t))]\\
&+\E_{x,y}[D^2\varphi(\XX^{\epsilon}(t)).(\eta_x^{\epsilon,{\bf h}^3}(t),\zeta_x^{\epsilon,{\bf h}^1,{\bf h}^2}(t))]\\
&+\E_{x,y}[D^3\varphi(\XX^{\epsilon}(t)).\bigl(\eta_x^{\epsilon,{\bf h}^1}(t),\eta_x^{\epsilon,{\bf h}^2}(t),\eta_x^{\epsilon,{\bf h}^3}(t))\bigr)],
\end{align*}
where $t\in[0,T]\mapsto \xi^{\epsilon,{\bf h}^1,{\bf h}^2,{\bf h}^3}(t)=(\xi_x^{\epsilon,{\bf h}^1,{\bf h}^2,{\bf h}^3}(t),\xi_y^{\epsilon,{\bf h}^1,{\bf h}^2,{\bf h}^3}(t)) \HH$ is solution of
\[
\left\lbrace
\begin{aligned}
\frac{d\xi_x^{\epsilon,{\bf h}^1,{\bf h}^2,{\bf h}^3}(t)}{dt}&=-\IL\xi_x^{\epsilon,{\bf h}^1,{\bf h}^2,{\bf h}^3}(t)+DF(\XX^\epsilon(t),\YY^\epsilon(t)).\xi^{\epsilon,{\bf h}^1,{\bf h}^2,{\bf h}^3}(t)\\
&+D^2F(\XX^\epsilon(t),\YY^\epsilon(t)).(\eta^{\epsilon,{\bf h}^1}(t),\zeta^{\epsilon,{\bf h}^2,{\bf h}^3}(t))\\
&+D^2F(\XX^\epsilon(t),\YY^\epsilon(t)).(\eta^{\epsilon,{\bf h}^2}(t),\zeta^{\epsilon,{\bf h}^3,{\bf h}^1}(t))\\
&+D^2F(\XX^\epsilon(t),\YY^\epsilon(t)).(\eta^{\epsilon,{\bf h}^3}(t),\zeta^{\epsilon,{\bf h}^1,{\bf h}^2}(t))\\
&+D^3F(\XX^\epsilon(t),\YY^\epsilon(t)).(\eta^{\epsilon,{\bf h}^1}(t),\eta^{\epsilon,{\bf h}^2}(t),\eta^{{\bf h}^3}(t)),\\
\frac{d\xi_y^{\epsilon,{\bf h}^1,{\bf h}^2,{\bf h}^3}(t)}{dt}&=0,
\end{aligned}
\right.
\]
with initial values $\xi_x^{\epsilon,{\bf h}^1,{\bf h}^2,{\bf h}^3}(0)=0$ and $\xi_y^{\epsilon,{\bf h}^1,{\bf h}^2,{\bf h}^3}(0)=0$.
 
In the proofs of the inequalities~\eqref{eq:lemuepsilon-1} and~\eqref{eq:lemuepsilon-2}, the following auxiliary results have been obtained (where the dependence with respect to $\epsilon$ is not indicated): for all $t\in(0,T]$, $x,y\in H$ and ${\bf h}=(h_x,h_y),{\bf k}\in\HH$, one has
\begin{align*}
|\eta^{\epsilon,{\bf h}}(t)|&\le C_\kappa(T)t^{-1+\kappa}|\IL^{-1+\kappa}{\bf h}|\\
|\zeta^{\epsilon,{\bf h},{\bf k}}(t)|&\le C_\kappa(T)(|\IL^{-1+\kappa}h_x|+|\IL^{-1+\kappa}h_y|)|{\bf k}|.
\end{align*}
Using a mild formulation for $\xi_x^{\epsilon,{\bf h}^1,{\bf h}^2,{\bf h}^3}(t)$, the boundedness of the derivatives of $F$ of order $1,2,3$ (Assumption~\ref{ass:F}), the two upper bounds above (and versions using symmetries with respect to permutations of ${\bf h}^1$, ${\bf h}^2$ and ${\bf h}^3$), and Gronwall's lemma, one obtains the upper bound
\[
\underset{0\le t\le T}\sup~|\xi_x^{\epsilon,{\bf h}^1,{\bf h}^2,{\bf h}^3}(t)|\le C_\kappa(T)|{\bf h}^1|(|\IL^{-\frac12-\kappa}h_x^2|+|\IL^{-\frac12-\kappa}h_y^2|)|{\bf h}^3|.
\]
Using the expression for $D^3u^\epsilon(t,x,y).({\bf h}^1,{\bf h^2},{\bf h^3})$ above and the upper bounds, the proof of the auxiliary inequality~\eqref{eq:lemuepsilon-aux} is completed.

We are now in position to conclude the proof of the inequality~\eqref{eq:lemuepsilon-3}: using the auxiliary inequality~\eqref{eq:lemuepsilon-aux}, the last term satisfies the following upper bound:
\begin{equation}\label{eq:lemuepsilon-3-4}
\big|\sum_{j\in\N}D_xD_y^2u^\epsilon(t,x,y).(h,e_j,e_j)\big|\le \frac{C_\kappa(T)}{t^{1-\kappa}}\vvvert\varphi\vvvert_3|h|\sum_{j\in \N}\lambda_j^{-1+\kappa}\le \frac{C_\kappa(T)}{t^{1-\kappa}}\vvvert\varphi\vvvert_3|h|.
\end{equation}

Gathering the four upper bounds~\eqref{eq:lemuepsilon-3-1}, \eqref{eq:lemuepsilon-3-2}, \eqref{eq:lemuepsilon-3-3} and~\eqref{eq:lemuepsilon-3-4} and using the expression of $\partial_t\langle D_xu^\epsilon(t,x,y),h\rangle$ then gives the inequality~\eqref{eq:lemuepsilon-3}.
\end{proof}

\subsection{Proof of Lemma~\ref{lem:ubarDeltat}}\label{sec:proof-ubarDeltat}

Before proceeding with the proofs of the regularity estimates stated in Lemma~\ref{lem:ubarDeltat}, note that the mapping $\overline{u}_{n}^{\Delta t}:H\to\R$ is of class $\mathcal{C}^2$: this is proved by recursion using the expression
\begin{equation}\label{eq:u_n+1-n}
\overline{u}_{n+1}^{\Delta t}(x)=\overline{u}_n^{\Delta t}(\IA_{\Delta t}x+\Delta t\IA_{\Delta t}\overline{F}(x)),
\end{equation}
with the initial value $\overline{u}_0^{\Delta t}=\varphi$ being of class $\mathcal{C}^2$.

In addition, using the identity
\[
\overline{u}_{n+1}^{\Delta t}(x)=\varphi(\IA_{\Delta t}\overline{\XX}_n^{\Delta t;x}+\Delta t\IA_{\Delta t}\overline{F}(\overline{\XX}_n^{\Delta t;x})),
\]
a recursion argument proves the following expressions: for all $x,h,k\in H$, and $n\in\{0,\ldots,N\}$, one has
\begin{align}
\langle D\overline{u}_n^{\Delta t}(x),h\rangle&=\langle D\varphi(\overline{\XX}_n^{\Delta;x}),\eta_n^{\Delta t;x,h}\rangle\\
D^2\overline{u}_n^{\Delta t}(x).(h,k)&=D^2\varphi(\overline{\XX}_n^{\Delta t;x}).(\eta_n^{\Delta t;x,h},\eta_n^{\Delta t;x,k})+\langle D\varphi(\overline{\XX}_n^{\Delta t;x}),\zeta_n^{\Delta t;x,h,k}\rangle,
\end{align}
where the auxiliary sequences $\bigl(\eta_n^{\Delta t;x,h}\bigr)_{n\ge 0}$ and $\bigl(\zeta_n^{\Delta t;x,h}\bigr)_{n\ge 0}$ are defined by
\begin{align}
\eta_{n+1}^{\Delta t;x,h}&=\IA_{\Delta t}\eta_n^{\Delta t;x,h}+\Delta t\IA_{\Delta t}D\overline{F}(\overline{\XX}_n^{\Delta t;x}).\eta_n^{\Delta t;x,h}\\
\zeta_{n+1}^{\Delta t;x,h}&=\IA_{\Delta t}\zeta_n^{\Delta t;x,h,k}+\Delta t\IA_{\Delta t}D\overline{F}(\overline{\XX}_n^{\Delta t;x}).\zeta_n^{\Delta t;x,h,k}+\Delta t\IA_{\Delta t}D^2\overline{F}(\overline{\XX}_n^{\Delta t;x}).(\eta_n^{\Delta t;x,h},\eta_n^{\Delta t;x,k}),
\end{align}
with initial values $\eta_0^{\Delta t;x,h}=h$ and $\zeta_0^{\Delta t;x,h,k}=0$.

\begin{proof}[Proof of the inequality~\eqref{eq:lemubarDeltat_1}]
Introduce the auxiliary variable $\tilde{\eta}_n^{\Delta t;x,h}=\eta_n^{\Delta t;x,h}-\IA_{\Delta t}^nh$ for all $n\in\{0,\ldots,N\}$. Using the  inequality~\eqref{eq:smoothingnum}, one obtains
\begin{align*}
|\langle D\overline{u}_n^{\Delta t}(x),h\rangle|&\le \big|\langle D\varphi(\overline{\XX}_n^{\Delta t;x}),\IA_{\Delta t}^n h\rangle\big|+\big|\langle D\varphi(\overline{\XX}_n^{\Delta t;x}),\tilde{\eta}_n^{\Delta t;x,h}\rangle\big|\\
&\le \frac{C_\kappa\vvvert\varphi\vvvert_1}{(n\Delta t)^{1-\kappa}}|\IL^{-1+\kappa}h|+\vvvert\varphi\vvvert_1|\tilde{\eta}_n^{\Delta t;x,h}|.
\end{align*}
Observe that the auxiliary sequence $\bigl(\tilde{\eta}_n^{\Delta t;x,h}\bigr)_{n\ge 0}$ satisfies for all $n\ge 0$
\[
\tilde{\eta}_{n+1}^h=\IA_{\Delta t}\tilde{\eta}_n^h+\Delta t\IA_{\Delta t}D\overline{F}(\overline{\XX}_n^{\Delta t;x}).\tilde{\eta}_n^h+\Delta t\IA_{\Delta t}D\overline{F}(\overline{\XX}_n^{\Delta t;x}).(\IA_{\Delta t}^n h),
\]
with $\tilde{\eta}_0^{\Delta t;x,h}=0$. As a consequence, one obtains the equality
\[
\tilde{\eta}_{n}^{\Delta t;x,h}=\Delta t\sum_{\ell=0}^{n-1}\IA_{\Delta t}^{n-\ell}D\overline{F}(\overline{\XX}_\ell^{\Delta t;x}).\tilde{\eta}_\ell^{\Delta t;x,h}+\Delta t\sum_{\ell=0}^{n-1}\IA_{\Delta t}^{n-\ell}D\overline{F}(\overline{\XX}_\ell^{\Delta t;x}).(\IA_{\Delta t}^\ell h),
\]
which gives, using the inequality~\eqref{eq:smoothingnum}, for all $n\in\{1,\ldots,N\}$
\begin{align*}
|\tilde{\eta}_n^{\Delta t;x,h}|&\le C\Delta t\sum_{\ell=0}^{n-1}|\tilde{\eta}_\ell^{\Delta t;x,h}|+C\Delta t\sum_{\ell=0}^{n-1}|\IA_{\Delta t}^\ell h|\\
&\le C\Delta t\sum_{\ell=0}^{n-1}|\tilde{\eta}_\ell^{\Delta t;x,h}|+C\Delta t|h|+\Delta t\sum_{\ell=1}^{n-1}\frac{C_\kappa}{(\ell\Delta t)^h}|\IL^{-1+\kappa}h|\\
&\le C\Delta t\sum_{\ell=0}^{n-1}|\tilde{\eta}_k^{\Delta t;x,h}|+C\Delta t|h|+C_{\kappa}(T)|\IL^{-1+\kappa}h|.
\end{align*}
Applying the discrete Gronwall inequality yields
\[
|\tilde{\eta}_n^{\Delta t;x,h}|\le C_\kappa(T)(\Delta t|h|+|\IL^{-1+\kappa}h|)
\]
for all $n\in\{0,\ldots,N\}$, and finally one obtains the inequality
\begin{align*}
|\langle D\overline{u}_n^{\Delta t}(x),h\rangle|&\le \frac{C_\kappa\vvvert\varphi\vvvert_1}{(n\Delta t)^{1-\kappa}}|\IL^{-1+\kappa}h|+\vvvert\varphi\vvvert_1|\tilde{\eta}_n^{\Delta t;x,h}|\\
&\le \frac{C_\kappa\vvvert\varphi\vvvert_1}{(n\Delta t)^{1-\kappa}}|\IL^{-1+\kappa}h|+C_\kappa(T)\vvvert\varphi\vvvert_1(\Delta t|h|+|\IL^{-1+\kappa}h|)
\end{align*}
which concludes the proof of the inequality~\eqref{eq:lemubarDeltat_1}.
\end{proof}

\begin{proof}[Proof of the inequality~\eqref{eq:lemubarDeltat_2}]

Using the identity $\eta_n^{\Delta t;x,h}=\IA_{\Delta t}^{n}h+\tilde{\eta}_n^{\Delta t;x,h}$ for all $n\in\{0,\ldots,N\}$ and the inequality above, one obtains
\begin{align*}
\big|D^2\overline{u}_n^{\Delta t}(x).(h,k)|&\le \vvvert\varphi\vvvert_1|\zeta_n^{\Delta t;x,h,k}|+\vvvert\varphi\vvvert_2|\eta_n^{\Delta t;x,h}||\eta_n^{\Delta t;x,k}|\\
&\le \vvvert\varphi\vvvert_1|\zeta_n^{\Delta t;x,h,k}|+C_\kappa(T)\vvvert\varphi\vvvert_2\frac{C_\kappa(T)}{(n\Delta t)^{1-\kappa}}\vvvert\varphi\vvvert_1 \bigl(\Delta t|h|+|\IL^{-1+\kappa}h|\bigr)|k|.
\end{align*}
It remains to give an upper bound for $|\zeta_n^{\Delta t;x,h,k}|$: for all $n\in\{0,\ldots,N\}$, one has
\[
\zeta_n^{\Delta t;x,h,k}=\Delta t\sum_{\ell=0}^{n-1}\IA_{\Delta t}^{n-\ell}D\overline{F}(\overline{\XX}_\ell^{\Delta t;x}).\zeta_\ell^{\Delta t;x,h,k}+\Delta t\sum_{\ell=0}^{n-1}\IA_{\Delta t}^{n-\ell}D^2\overline{F}(\XX_\ell^{\Delta t;x}).(\eta_\ell^{\Delta t;x,h},\eta_\ell^{\Delta t;x,k}).
\]
Since $\overline{F}:H\to\R$ is of class $\mathcal{C}^2$ with bounded first and second order derivatives, one obtains
\begin{align*}
|\zeta_n^{\Delta t;x,h,k}|&\le C\Delta t\sum_{\ell=0}^{n-1}|\zeta_\ell^{\Delta t;x,h,k}|+C\Delta t\sum_{\ell=0}^{n-1}|\eta_\ell^{\Delta t;x,h}||\eta_\ell^{\Delta t;x,k}|\\
&\le C\Delta t\sum_{\ell=0}^{n-1}|\zeta_\ell^{h,k}|+C\Delta t|h||k|+\Delta t\sum_{\ell=1}^{n-1}\frac{C_\kappa(T)}{(\ell\Delta t)^{1-\kappa}}|\IL^{-1+\kappa}h||k|\\
&\le C\Delta t\sum_{\ell=0}^{n-1}|\zeta_\ell^{h,k}|+C_\kappa(T)(\Delta t|h|+|\IL^{-1+\kappa}h|)|k|
\end{align*}
for all $n\in\{0,\ldots,N\}$. The discrete Gronwall inequality then yields
\[
\underset{0\le n\le N}\sup~|\zeta_n^{\Delta t;x,h,k}|\le C_\kappa(T)(\Delta t|h|+|\IL^{-1+\kappa}h|)|k|.
\]
Gathering the estimates then concludes the proof of the inequality~\eqref{eq:lemubarDeltat_2}.
\end{proof}

\begin{proof}[Proof of the inequality~\eqref{eq:lemubarDeltat_3}]
Using the identity~\eqref{eq:u_n+1-n}, for all $x,h\in H$ and for all $n\in\{1,\ldots,N\}$, one obtains the equality
\[
\langle D\overline{u}_{n+1}^{\Delta t}(x),h\rangle=\langle D\overline{u}_n^{\Delta t}\bigl(\IA_{\Delta t}x+\Delta t\IA_{\Delta t}\overline{F}(x)\bigr),\IA_{\Delta t}h+\Delta t\IA_{\Delta t}D\overline{F}(x).h\rangle.
\]
As a consequence, one has the inequality
\begin{align*}
\big|\langle D\overline{u}_{n+1}^{\Delta t}(x)-D\overline{u}_{n}^{\Delta t}(x),h\rangle\big|&\le \big|\langle D\overline{u}_n^{\Delta t}(\IA_{\Delta t}x+\Delta t\IA_{\Delta t}\overline{F}(x))-D\overline{u}_n^{\Delta t}(x+\Delta t\IA_{\Delta t}\overline{F}(x)),h\rangle\big|\\
&+\big|\langle D\overline{u}_n^{\Delta t}(x+\Delta t\IA_{\Delta t}\overline{F}(x))-D\overline{u}_n^{\Delta t}(x),h\rangle\big|\\
&+\big|\langle D\overline{u}_n^{\Delta t}(\IA_{\Delta t}x+\Delta t\IA_{\Delta t}\overline{F}(x)),(\IA_{\Delta t}-I)h\rangle\big|\\
&+\big|\langle D\overline{u}_n^{\Delta t}(\IA_{\Delta t}x+\Delta t\IA_{\Delta t}\overline{F}(x)),\Delta t\IA_{\Delta t}D\overline{F}(x).h\rangle\big|
\end{align*}
and it remains to prove upper bounds for the four terms appearing in the right-hand side above. Let $\kappa\in(0,1]$.

$\bullet$ Using the inequality~\eqref{eq:lemubarDeltat_2}, for the first term, one obtains
\begin{align*}
\big|\langle D\overline{u}_n^{\Delta t}(\IA_{\Delta t}x+\Delta t\IA_{\Delta t}\overline{F}(x))&-D\overline{u}_n^{\Delta t}(x+\Delta t\IA_{\Delta t}\overline{F}(x)),h\rangle\big|\\
&\le \frac{C_\kappa(T)}{(n\Delta t)^{1-\kappa}}\vvvert\varphi\vvvert_2\bigl(\Delta t|(\IA_{\Delta t}-I)x|+|\IL^{-1+\kappa}(\IA_{\Delta t}-I)x|\bigr)|h|\\
&\le \frac{C_\kappa(T)\Delta t^{1-\kappa}}{(n\Delta t)^{1-\kappa}}\vvvert\varphi\vvvert_2|x||h|.
\end{align*}

$\bullet$ Using the inequality~\eqref{eq:lemubarDeltat_4} -- or the inequality~\eqref{eq:lemubarDeltat_2} with $\kappa=1$ -- and the global Lipschitz continuity of $\overline{F}$, for the second term, one obtains
\begin{align*}
\big|\langle D\overline{u}_n^{\Delta t}(x+\Delta t\IA_{\Delta t}\overline{F}(x))-D\overline{u}_n^{\Delta t}(x),h\rangle\big|&\le C(T)\vvvert\varphi\vvvert_2 \Delta t|\overline{F}(x)||h|\\
&\le C(T)\Delta t\vvvert\varphi\vvvert_2 (1+|x|)|h|.
\end{align*}

$\bullet$ Using the inequality~\eqref{eq:lemubarDeltat_1}, for the third term, one obtains
\begin{align*}
\big|\langle D\overline{u}_n^{\Delta t}(\IA_{\Delta t}x+\Delta t\IA_{\Delta t}\overline{F}(x))&,(\IA_{\Delta t}-I)h\rangle\big|\\
&\le \frac{C_\kappa(T)}{(n\Delta t)^{1-\kappa}}\vvvert\varphi\vvvert_1\bigl(\Delta t|(\IA_{\Delta t}-I)h|+|\IL^{-1+\kappa}(\IA_{\Delta t}-I)h|\bigr)\\
&\le \frac{C_\kappa(T)\Delta t^{1-\kappa}}{(n\Delta t)^{1-\kappa}}\vvvert\varphi\vvvert_1|h|.
\end{align*}

$\bullet$ Using the inequality~\eqref{eq:lemubarDeltat_4} -- or the inequality~\eqref{eq:lemubarDeltat_1} with $\kappa=1$ -- and the global Lipschitz continuity of $\overline{F}$, for the fourth term, one obtains
\begin{align*}
\big|\langle D\overline{u}_n^{\Delta t}(\IA_{\Delta t}x+\Delta t\IA_{\Delta t}\overline{F}(x)),\Delta t\IA_{\Delta t}D\overline{F}(x).h\rangle\big|&\le C(T)\vvvert\varphi\vvvert_1 \Delta t|\IA_{\Delta t}D\overline{F}(x).h|\\
&\le C(T)\Delta t\vvvert\varphi\vvvert_1(1+|x|)|h|.
\end{align*}
Gathering the estimates for the fourth terms considered above, one obtains the upper bound
\[
\big|\langle D\overline{u}_{n+1}^{\Delta t}(x)-D\overline{u}_{n}^{\Delta t}(x),h\rangle\big|\le \frac{C_\kappa(T)\Delta t^{1-\kappa}}{(n\Delta t)^{1-\kappa}}\vvvert\varphi\vvvert_2(1+|x|)|h|,
\]
which concludes the proof of the inequality~\eqref{eq:lemubarDeltat_3}.
\end{proof}

\section{Proof of the asymptotic preserving property}\label{sec:proof-AP}

This section is devoted to the proof of Propositions~\ref{propo:cv_scheme-limitingscheme} and~\ref{propo:error_limitingscheme-averagedequation} stated in Section~\ref{sec:AP}. The arguments are the same as in the proof of~\cite[Theorem~9.1]{B}, however they are also given here to make the presentation self-contained.

\subsection{Proof of Proposition~\ref{propo:cv_scheme-limitingscheme}}\label{sec:proof-AP1}

\begin{proof}[Proof of Proposition~\ref{propo:cv_scheme-limitingscheme}]
It is convenient to employ the following interpretation of the modified Euler scheme (see~\cite[Section~3.2]{B}): if $\bigl(\Gamma_n\bigr)_{n\ge 0}$ is a sequence of independent cylindrical Gaussian random variables, one has the equality in distribution 
\[
\IB_{\frac{\Delta t}{\epsilon},1}\Gamma_{n,1}+\IB_{\frac{\Delta t}{\epsilon},2}\Gamma_{n,2}=\IB_{\frac{\Delta t}{\epsilon}}\Gamma_{n}
\]
for all $n\ge 0$, where the self-adjoint linear operator $\IB_{\frac{\Delta t}{\epsilon}}=\IB_\tau$ is defined by
\[
\IB_\tau x=\sum_{j\in\N}\frac{\sqrt{2+\lambda_j\tau}}{\sqrt{2}~(1+\lambda_j\tau)}\langle x,e_j\rangle e_j
\]
for all $x\in H$, and satisfies the identity
\[
\IB_\tau^2=\IB_{\tau,1}^2+\IB_{\tau,2} \IB_{\tau,2}^\star=\frac12\bigl(\IA_\tau^2+\IA_\tau\bigr)=\frac12(2I+\tau\IL)(I+\tau\IL)^{-2}.
\]
As a consequence, one has the equality in distribution
\[
\bigl(\XX_n^{\epsilon,\Delta t},\YY_n^{\epsilon,\Delta t}\bigr)_{n\ge 0}=\bigl(\hat{\XX}_n^{\epsilon,\Delta t},\hat{\YY}_n^{\epsilon,\Delta t}\bigr)_{n\ge 0}
\]
where the scheme $\bigl(\hat{\XX}_n^{\epsilon,\Delta t},\hat{\YY}_n^{\epsilon,\Delta t}\bigr)_{n\ge 0}$ is defined by
\begin{equation}\label{eq:APscheme2nd}
\left\lbrace
\begin{aligned}
\hat{\XX}_{n+1}^{\epsilon,\Delta t}&=\IA_{\Delta t}\bigl(\hat{\XX}_n^{\epsilon,\Delta t}+\tau F(\hat{\XX}_n^{\epsilon,\Delta t},\hat{\YY}_{n+1}^{\epsilon,\Delta t})\bigr)\\
\hat{\YY}_{n+1}^{\epsilon,\Delta t}&=\IA_{\frac{\Delta t}{\epsilon}}\hat{\YY}_n^{\epsilon,\Delta t}+\sqrt{\frac{2\Delta t}{\epsilon}}\IB_{\frac{\Delta t}{\epsilon}}\Gamma_{n},
\end{aligned}
\right.
\end{equation}
with initial values $\hat{\XX}_0^{\epsilon,\Delta t}=x_0^\epsilon=\XX_0^{\epsilon,\Delta t}$ and $\hat{\YY}_0^{\epsilon,\Delta t}=y_0^\epsilon=\YY_0^{\epsilon,\Delta t}$. In particular, one has
\[
\E[\varphi(\XX_{n}^{\epsilon,\Delta t})]=\E[\varphi(\hat{\XX}_{n}^{\epsilon,\Delta t})].
\]
Since the function $\varphi$ is assumed to be globally Lipschitz continuous, it suffices to prove that, for all $n\in\{0,\ldots,N\}$, one has
\[
\E[|\hat{\XX}_n^{\epsilon,\Delta t}-\XX_n^{\Delta t}|]\underset{\epsilon\to 0}\to 0.
\]
Note that for all $n\in\{0,\ldots,N\}$, one has the identities
\begin{align*}
\hat{\XX}_n^{\epsilon,\Delta t}&=\IA_{\Delta t}^n x_0^\epsilon+\Delta t\sum_{\ell=0}^{n-1}\IA_{\Delta t}^{n-\ell}F(\hat{\XX}_\ell^{\epsilon,\Delta t},\hat{\YY}_{\ell+1}^{\epsilon,\Delta t}),\\
{\XX}_n^{\Delta t}&=\IA_{\Delta t}^n x_0+\Delta t\sum_{\ell=0}^{n-1}\IA_{\Delta t}^{n-\ell}F({\XX}_\ell^{\Delta t},\IL^{-\frac12}\Gamma_{\ell}).
\end{align*}
Therefore, for all $n\in\{0,\ldots,N\}$, one has
\begin{align*}
\E[|\hat{\XX}_n^{\epsilon,\Delta t}-\XX_n^{\Delta t}|]&\le |x_0^\epsilon-x_0|+\Delta t\sum_{\ell=0}^{n-1}\E[|F(\hat{\XX}_\ell^{\epsilon,\Delta t},\hat{\YY}_{\ell+1}^{\epsilon,\Delta t})-F(\hat{\XX}_\ell^{\epsilon,\Delta t},\IL^{-\frac12}\Gamma_{\ell})|]\\
&\le |x_0^\epsilon-x_0|+\Delta t\sum_{\ell=0}^{n-1}\E[|F(\hat{\XX}_\ell^{\epsilon,\Delta t},\hat{\YY}_{\ell+1}^{\epsilon,\Delta t})-\hat{F}(\XX_\ell^{\Delta t},\hat{\YY}_{\ell+1}^{\epsilon,\Delta t})|]\\
&+\Delta t\sum_{\ell=0}^{n-1}\E[|F(\XX_\ell^{\Delta t},\hat{\YY}_{\ell+1}^{\epsilon,\Delta t})-F(\XX_\ell^{\Delta t},\IL^{-\frac12}\Gamma_{\ell,2})|]\\
&\le |x_0^\epsilon-x_0|+C\Delta t\sum_{\ell=0}^{n-1}\E[|\hat{\XX}_\ell^{\epsilon,\Delta t}-\XX_\ell^{\Delta t}|]+C\Delta t\sum_{\ell=0}^{n-1}\E[|\hat{\YY}_{\ell+1}^{\epsilon,\Delta t}-\IL^{-\frac12}\Gamma_{\ell}|],
\end{align*}
using the global Lipschitz continuity property of $F$ (Assumption~\ref{ass:F}).

Using a straightforward recursion argument (see details below), note that it suffices to check the following claim: for all $\ell\ge 0$, one has
\[
\E[|\YY_{\ell+1}^{\epsilon,\Delta t}-\IL^{-\frac12}\Gamma_{\ell}|^2]\underset{\epsilon\to 0}\to 0.
\]
By the definition of the scheme~\eqref{eq:APscheme2nd}, one has
\[
\hat{\YY}_{\ell+1}^{\epsilon,\Delta t}-\IL^{-\frac12}\Gamma_{\ell}=\IA_{\frac{\Delta t}{\epsilon}}\hat{\YY}_\ell^{\epsilon,\Delta t}+\Bigl(\sqrt{\frac{2\Delta t}{\epsilon}}\IB_{\frac{\Delta t}{\epsilon}}-\IL^{-\frac12}\Bigr)\Gamma_{\ell}.
\]
On the one hand, using the moment bound
\[
\underset{\epsilon\in(0,\epsilon_0),\Delta t\in(0,\Delta t_0)}\sup~\underset{\ell\ge 0}\sup~\E[|\hat{\YY}_\ell^{\epsilon,\Delta t}|^2]=\underset{\epsilon\in(0,\epsilon_0),\Delta t\in(0,\Delta t_0)}\sup~\underset{\ell\ge 0}\sup~\E[|\YY_\ell^{\epsilon,\Delta t}|^2]<\infty,
\]
see the inequality~\eqref{eq:boundYnepsilonDeltat} from Lemma~\ref{lem:momentboundsscheme-epsilonDeltat}, one obtains
\[
\E[|\IA_{\frac{\Delta t}{\epsilon}}\hat{\YY}_\ell^{\epsilon,\Delta t}|]\le \frac{C}{1+\lambda_1\frac{\Delta t}{\epsilon}}\underset{\epsilon\to 0}\to 0.
\]
On the other hand, one has
\begin{align*}
\E[|\bigl(\sqrt{\frac{2\Delta t}{\epsilon}}\IB_{\frac{\Delta t}{\epsilon}}-\IL^{-\frac12}\bigr)\Gamma_{\ell}|^2]&=\sum_{j\in\N}\bigl(\sqrt{\frac{2\Delta t}{\epsilon}}\frac{\sqrt{2+\lambda_j\frac{\Delta t}{\epsilon}}}{\sqrt{2}(1+\lambda_j\frac{\Delta t}{\epsilon})}-\frac{1}{\sqrt{\lambda_j}}\bigr)^2\\
&=\sum_{j\in\N}\frac{1}{\bigl(\sqrt{2\frac{\Delta t}{\epsilon}}\frac{\sqrt{2+\lambda_j\frac{\Delta t}{\epsilon}}}{\sqrt{2}(1+\lambda_j\frac{\Delta t}{\epsilon})}+\frac{1}{\sqrt{\lambda_j}}\bigr)^2}\bigl(\frac{\frac{\Delta t}{\epsilon}(2+\lambda_j\frac{\Delta t}{\epsilon})}{(1+\lambda_j\frac{\Delta t}{\epsilon})^2}-\frac{1}{\lambda_j}\bigr)^2\\
&\le \sum_{j\in\N}\lambda_j\frac{1}{\bigl(\lambda_j(1+\lambda_j\frac{\Delta t}{\epsilon})^2\bigr)^2}\\
&\le \sum_{j\in\N}\frac{1}{\lambda_j(1+\lambda_j\frac{\Delta t}{\epsilon})^4}\\
&\underset{\epsilon\to 0}\to 0.
\end{align*}

As a consequence, one obtains
\[
\underset{\epsilon\to 0}\limsup~\E[|\hat{\XX}_n^{\epsilon,\Delta t}-\XX_n^{\Delta t}|]\le C\Delta t\sum_{\ell=0}^{n-1}\underset{\epsilon\to 0}\limsup~\E[|\hat{\XX}_\ell^{\epsilon,\Delta t}-\XX_\ell^{\Delta t}|],
\]
for all $n\in\{0,\ldots,N\}$. Since $\hat{\XX}_0^{\epsilon,\Delta t}-\XX_0^{\Delta t}=x_0^\epsilon-x_0\underset{\epsilon\to 0}\to 0$ owing to Assumption~\ref{ass:init}, it is then straightforward to conclude that
\[
\underset{\epsilon\to 0}\limsup~\E[|\hat{\XX}_n^{\epsilon,\Delta t}-\XX_n^{\Delta t}|]=0
\]
for all $n\in\{0,\ldots,N\}$. As explained above, this yields
\[
\underset{\epsilon\to 0}\lim~\E[\varphi(\XX_{n}^{\epsilon,\Delta t})]=\underset{\epsilon\to 0}\lim~\E[\varphi(\hat{\XX}_{n}^{\epsilon,\Delta t})]=\E[\varphi(\XX_{n}^{\Delta t})]
\]
and concludes the proof of~\eqref{eq:cvlimitingscheme} and of Proposition~\ref{propo:cv_scheme-limitingscheme}.
\end{proof}

\subsection{Proof of Proposition~\ref{propo:error_limitingscheme-averagedequation}}\label{sec:proof-AP2}

\begin{proof}
Let $\varphi:H\to \R$ be a mapping of class $\mathcal{C}^2$, with bounded first and second order derivatives, and let $\Delta t=T/N\in(0,\Delta t_0)$. Then the weak error in the left-hand side of~\eqref{eq:error_limitingscheme-averagedequation} can be decomposed as follows:
\[
\big|\E[\varphi(\XX_N^{\Delta t})]-\E[\varphi(\overline{\XX}(T))]\big|\le \big|\varphi(\overline{\XX}_N^{\Delta t})-\varphi(\overline{\XX}(T))\big|+\big|\E[\varphi(\XX_N^{\Delta t})]-\varphi(\overline{\XX}_N^{\Delta t})\big|,
\]
where $\overline{\XX}_N^{\Delta t}=\overline{\XX}_N^{\Delta t;x_0}$ is obtained by using the auxiliary scheme~\eqref{eq:auxiliaryscheme}, with initial value given by $\overline{\XX}_0^{\Delta t}=\XX_0^{\Delta t}=x_0$.

On the one hand, using the error estimate~\eqref{eq:error_auxiliaryscheme} from Proposition~\ref{propo:error_auxiliaryscheme} for the auxiliary scheme, one has
\begin{align*}
\big|\varphi(\overline{\XX}_N^{\Delta t})-\varphi(\overline{\XX}(T))\big|&\le \vvvert\varphi\vvvert_1|\overline{\XX}_N^{\Delta t}-\overline{\XX}(T)|\\
&\le C_\kappa(T)\Delta t^{1-\kappa}(1+|x_0|).
\end{align*}

On the other hand, the second error term can be written as follows, in terms of the auxiliary mappings $\overline{u}_n^{\Delta t}$ given by~\eqref{eq:ubar}, using a telescoping sum argument: one has
\begin{align*}
\E[\varphi(\XX_N^{\Delta t})]-\varphi(\overline{\XX}_N^{\Delta t})&=\E[\overline{u}_0^{\Delta t}(\XX_N^{\Delta t})]-\E[\overline{u}_N^{\Delta t}(\XX_0^{\Delta t})]\\
&=\sum_{n=0}^{N-1}\bigl(\E[\overline{u}_{N-n-1}^{\Delta t}(\XX_{n+1}^{\Delta t})]-\E[\overline{u}_{N-n}^{\Delta t}(\XX_n^{\Delta t})]\bigr)\\
&=\sum_{n=0}^{N-1}\bigl(\E[\overline{u}_{N-n-1}^{\Delta t}(\IA_{\Delta t}\XX_n^{\Delta t}+\Delta t \IA_{\Delta t} F(\XX_n^{\Delta t},\IL^{-\frac12}\Gamma_n))]\\
&\hspace{1cm}-\E[\overline{u}_{N-n-1}^{\Delta t}(\IA_{\Delta t}\XX_n^{\Delta t}+\Delta t\IA_{\Delta t} \overline{F}(\XX_n^{\Delta t}))]\bigr).
\end{align*}
Owing to Lemma~\ref{lem:ubarDeltat}, for all $n\in\{0,\ldots,N-1\}$, the mapping $\overline{u}_n^{\Delta t}$ is of class $\mathcal{C}^2$ and has a bounded first and second order derivatives. By a Taylor expansion argument, one obtains
\begin{align*}
\E[\overline{u}_{N-n-1}^{\Delta t}(\IA_{\Delta t}\XX_n^{\Delta t}&+\Delta t \IA_{\Delta t} F(\XX_n^{\Delta t},\IL^{-\frac12}\Gamma_n))]=\E[\overline{u}_{N-n-1}^{\Delta t}(\IA_{\Delta t}\XX_n^{\Delta t}+\Delta t\IA_{\Delta t} \overline{F}(\XX_n^{\Delta t}))]\\
&+\Delta t\E[\langle D\overline{u}_{N-n-1}^{\Delta t}(\IA_{\Delta t}\XX_n^{\Delta t}),\IA_{\Delta t} F(\XX_n^{\Delta t},\IL^{-\frac12}\Gamma_n)-\IA_{\Delta t}\overline{F}(\XX_n^{\Delta t})\rangle]+r_n^{\Delta t}
\end{align*}
where
\begin{align*}
|r_n^{\Delta t}|&\le C\vvvert \overline{u}_{N-n-1}^{\Delta t}\vvvert_2\Delta t^2 \E[|F(\XX_n^{\Delta t},\IL^{-\frac12}\Gamma_n)-\overline{F}(\XX_n^{\Delta t})|^2]\\
&\le C(T)\Delta t^2\vvvert\varphi\vvvert_2(1+\E[|\XX_n^{\Delta t}|^2])\\
&\le C(T)\Delta t^2\vvvert\varphi\vvvert_2 (1+|x_0|^2).
\end{align*}
using the inequality~\eqref{eq:lemubarDeltat_4}, the Lipschitz continuity of $F$, the moment bound~\eqref{eq:momentboundscheme} and the bound $\E[|\IL^{-\frac12}\Gamma_n|]=\int|y|d\nu(y)<\infty$. Moreover, by the definition~\eqref{eq:Fbar} of the nonlinearity $\overline{F}$, and since the random variables $\XX_n^{\Delta t}$ and $\Gamma_n$ are independent, a conditional expectation argument yields the identity
\begin{equation}\label{eq:crucialidentity}
\E[\langle D\overline{u}_{N-n-1}^{\Delta t}(\IA_{\Delta t}\XX_n^{\Delta t}),\IA_{\Delta t} F(\XX_n^{\Delta t},\IL^{-\frac12}\Gamma_n)-\IA_{\Delta t}\overline{F}(\XX_n^{\Delta t})\rangle]=0
\end{equation}
for all $n\in\{0,\ldots,N-1\}$. As a consequence, one obtains
\begin{equation}\label{eq:errorXbarX}
\big|\E[\varphi(\XX_N^{\Delta t})]-\varphi(\overline{\XX}_N^{\Delta t})\big|\le C(T)\Delta t\vvvert\varphi\vvvert_2 (1+|x_0|^2).
\end{equation}
Gathering the error estimates then concludes the proof of the inequality~\eqref{eq:error_limitingscheme-averagedequation} and of Proposition~\ref{propo:error_limitingscheme-averagedequation}.
\end{proof}

Note that the most fundamental argument in the proof of Proposition~\ref{propo:error_limitingscheme-averagedequation} is the identity~\eqref{eq:crucialidentity}. It explains both why the limiting scheme~\eqref{eq:limitingscheme} is consistent with the averaged equation~\eqref{eq:averaged}, and why it convergence in distribution is considered.

\section{Proofs of the error estimates}\label{sec:proof-UA}

\subsection{Proof of Proposition~\ref{propo:error_auxiliaryscheme}}\label{sec:proof-UA1}

As already explained in Section~\ref{sec:auxiliary-error}, the proof of Proposition~\ref{propo:error_auxiliaryscheme} is given below even if it is a standard result in numerical analysis of parabolic semilinear evolution equations. Providing a detailed proof allows us to exhibit the absence of regularity requirement for the initial value $x_0$. In the proofs, to simplify notation, let $\overline{\XX}_n=\overline{\XX_n}^{\Delta t;x_0}$.

Before proceeding with the proof, let us state auxiliary bounds for the solutions of the averaged equation~\eqref{eq:averaged} and of the auxiliary scheme~\eqref{eq:auxiliaryscheme}.
\begin{lemma}\label{lem:bounds_averaged-auxiliaryscheme}
For all $T\in(0,\infty)$ and $\kappa\in(0,1)$, there exists $C_\kappa(T)\in(0,\infty)$ such that for all $0<t_1<t_2\le T$, one has
\begin{equation}\label{eq:bound_averaged-increment}
|\overline{\XX}(t_2)-\overline{\XX}(t_1)|\le C_\kappa(T)(t_2-t_1)^{1-\kappa}(1+t_1^{-1+\kappa}|x_0|).
\end{equation}
Moreover, there exists $C\in(0,\infty)$ such that for all $n\in\N$ and $\Delta t\in(0,\Delta t_0)$, one has
\begin{equation}\label{eq:bound_auxiliaryscheme}
|\overline{\XX}_n^{\Delta t}|\le e^{Cn\Delta t}(1+|x_0|).
\end{equation}
\end{lemma}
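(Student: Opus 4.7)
The plan is to prove the two inequalities separately using the mild formulation for the continuous equation, and a direct recursion for the scheme.

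For the second inequality \eqref{eq:bound_auxiliaryscheme}, I would argue directly from the recursion~\eqref{eq:auxiliaryscheme}. Using $\|\IA_{\Delta t}\|_{\mathcal{L}(H)}\le 1$ and the fact that $\overline{F}$ is globally Lipschitz continuous (hence has linear growth), one obtains $|\overline{\XX}_{n+1}^{\Delta t}|\le (1+C\Delta t)|\overline{\XX}_n^{\Delta t}|+C\Delta t$. Solving this affine recursion and using $(1+C\Delta t)^n\le e^{Cn\Delta t}$ together with a geometric sum gives~\eqref{eq:bound_auxiliaryscheme} in one step. This part is routine and presents no obstacle.

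For~\eqref{eq:bound_averaged-increment}, I would start from the mild formulation
\[
\overline{\XX}(t_2)-\overline{\XX}(t_1)=\bigl(e^{-(t_2-t_1)\IL}-I\bigr)e^{-t_1\IL}x_0+\int_{t_1}^{t_2}e^{-(t_2-s)\IL}\overline{F}(\overline{\XX}(s))ds+\int_0^{t_1}\bigl(e^{-(t_2-t_1)\IL}-I\bigr)e^{-(t_1-s)\IL}\overline{F}(\overline{\XX}(s))ds.
\]
A standard Gronwall estimate on the mild equation first gives $\sup_{0\le t\le T}|\overline{\XX}(t)|\le C(T)(1+|x_0|)$, which requires only $x_0\in H$ (the key point emphasized after the lemma). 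I would then bound each of the three terms above. For the first one, I combine the regularity inequality~\eqref{eq:regularity} with the smoothing inequality~\eqref{eq:smoothing}: $|(e^{-(t_2-t_1)\IL}-I)e^{-t_1\IL}x_0|\le C_\kappa(t_2-t_1)^{1-\kappa}|\IL^{1-\kappa}e^{-t_1\IL}x_0|\le C_\kappa(t_2-t_1)^{1-\kappa}t_1^{-(1-\kappa)}|x_0|$, which produces the singular factor $t_1^{-1+\kappa}$ in the claim. For the second one, using $\|e^{-(t_2-s)\IL}\|_{\mathcal{L}(H)}\le 1$ and the linear growth of $\overline{F}$ gives a bound by $C(T)(t_2-t_1)(1+|x_0|)$, which is dominated by $(t_2-t_1)^{1-\kappa}$ up to a $T$-dependent constant.

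The mildly delicate term is the third one, since there I need to both extract the factor $(t_2-t_1)^{1-\kappa}$ and keep the integral over $s$ finite. I would again write $e^{-(t_2-t_1)\IL}-I$ and apply~\eqref{eq:regularity} followed by~\eqref{eq:smoothing}:
\[
|\bigl(e^{-(t_2-t_1)\IL}-I\bigr)e^{-(t_1-s)\IL}\overline{F}(\overline{\XX}(s))|\le C_\kappa(t_2-t_1)^{1-\kappa}(t_1-s)^{-(1-\kappa)}(1+|\overline{\XX}(s)|),
\]
and the integral $\int_0^{t_1}(t_1-s)^{-(1-\kappa)}ds=\frac{t_1^\kappa}{\kappa}\le\frac{T^\kappa}{\kappa}$ is finite. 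Combined with the $L^\infty$ bound on $\overline{\XX}$, this contributes $C_\kappa(T)(t_2-t_1)^{1-\kappa}(1+|x_0|)$. Summing the three contributions and absorbing the $1$ into $1+t_1^{-1+\kappa}|x_0|$ (since $t_1\le T$ so $t_1^{-1+\kappa}\ge T^{-1+\kappa}$ is bounded below) yields~\eqref{eq:bound_averaged-increment}. The main subtle point is the treatment of the low-regularity initial value $x_0\in H$: the singular factor $t_1^{-1+\kappa}$ is unavoidable here, but the smoothing property of the semigroup makes it integrable, and keeps all $s\in[0,t_1]$ integrals convergent since $1-\kappa<1$.
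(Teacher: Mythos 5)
Your proposal is correct and follows essentially the same route as the paper: a Gronwall bound for $\sup_{t\le T}|\overline{\XX}(t)|$, the affine recursion for the scheme, and the combination of the regularity inequality~\eqref{eq:regularity} with the smoothing inequality~\eqref{eq:smoothing} to extract $(t_2-t_1)^{1-\kappa}$ at the cost of the integrable singularity $t_1^{-1+\kappa}$. The only (cosmetic) difference is that the paper packages your first and third estimates into the intermediate bound $|\IL^{1-\kappa}\overline{\XX}(t_1)|\le C_\kappa(T)(1+t_1^{-1+\kappa}|x_0|)$ before applying~\eqref{eq:regularity} to $(e^{-(t_2-t_1)\IL}-I)\overline{\XX}(t_1)$, whereas you distribute the operator over the two pieces of the mild formulation directly.
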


\begin{proof}[Proof of Lemma~\ref{lem:bounds_averaged-auxiliaryscheme}]
Let us first prove the inequality~\eqref{eq:bound_averaged-increment}. Since $\overline{F}$ is globally Lipschitz continuous, for all $t\ge 0$, one has
\begin{align*}
|\overline{\XX}(t)|&\le |e^{-t\IL}x_0|+\int_{0}^{t}|e^{-(t-s)\IL}\overline{F}(\overline{\XX}(s))|ds\\
&\le |x_0|+C\int_{0}^{t}(1+|\overline{\XX}(s)|)ds.
\end{align*}
Applying Gronwall's lemma, one then obtains for all $t\ge 0$
\[
|\overline{\XX}(t)|\le e^{Ct}(1+|x_0|).
\]
Let $\kappa\in(0,1)$, using the inequality~\eqref{eq:smoothing}, one then has for all $t>0$
\begin{align*}
|\IL^{1-\kappa}\overline{\XX}(t)|&\le |\IL^{1-\kappa}e^{-t\IL}x_0|+\int_0^t |\IL^{1-\kappa}e^{-(t-s)\IL}\overline{F}(\overline{\XX}(s))|ds\\
&\le C_\kappa t^{-1+\kappa}|x_0|+C_\kappa \int_0^t (t-s)^{-1+\kappa}dse^{Ct}(1+|x_0|)\\
&\le C_\kappa(T)(1+t^{-1+\kappa}|x_0|).
\end{align*}

For all $0<t_1<t_2\le T$, using the inequality~\eqref{eq:smoothing}, one then has
\begin{align*}
|\overline{\XX}(t_2)-\overline{\XX}(t_1)|&\le |(e^{-(t_2-t_1)\IL}-I)\overline{\XX}(t_1)|+\int_{t_1}^{t_2}|e^{-(t_2-t)\IL}\overline{\XX}(t))|dt\\
&\le C_\kappa(T)(t_2-t_1)^{1-\kappa}|\IL^{1-\kappa}\overline{\XX}(t_1)|+(t_2-t_1)\bigl(1+\underset{0\le t\le T}\sup~|\overline{\XX}(t))|\bigr)\\
&\le C_\kappa(T)(t_2-t_1)^{1-\kappa}(1+t_1^{-1+\kappa}|x_0|).
\end{align*}
This concludes the proof of the inequality~\eqref{eq:bound_averaged-increment}. Let us now prove the inequality~\eqref{eq:bound_auxiliaryscheme}. Since $\overline{F}$ is globally Lipschitz continuous, for all $n\ge 0$, one has
\[
|\overline{\XX}_{n+1}|\le |\IA_{\Delta t}\overline{\XX}_n|+\Delta t|\IA_{\Delta t}\overline{F}(\overline{\XX}_n)|\le (1+C\Delta t)|\overline{\XX}_n|+C\Delta t.
\]
The inequality~\eqref{eq:bound_auxiliaryscheme} then follows from a straightforward argument. The proof of Lemma~\ref{lem:bounds_averaged-auxiliaryscheme} is thus completed.
\end{proof}

We are now in position to prove Proposition~\ref{propo:error_auxiliaryscheme}.
\begin{proof}[Proof of Proposition~\ref{propo:error_auxiliaryscheme}]
For all $n\ge 0$, with the notation $t_n=n\Delta t$, one has
\begin{align*}
\overline{\XX}_n&=\IA_{\Delta t}^n x_0+\Delta t\sum_{\ell=0}^{n-1}\IA_{\Delta t}^{n-\ell}\overline{F}(\overline{\XX}_\ell)\\
\overline{\XX}(n\Delta t)&=e^{-t_n\IL}x_0+\int_{0}^{t_n}e^{-(t_n-t)\IL}\overline{F}(\overline{\XX}(t))dt.
\end{align*}
For all $n\in\{0,\ldots,N\}$, set $e_n=|\overline{\XX}_n-\overline{\XX}(n\Delta t)|$. Using the expressions above, the error $e_n$ can be decomposed as follows: for all $n\in\{0,\ldots,N\}$
\[
e_n\le e_n^{(1)}+e_n^{(2)}+e_n^{(3)}+e_n^{(4)}+e_n^{(5)},
\]
where
\begin{align*}
e_n^{(1)}&=\big|(\IA_{\Delta t}^n-e^{-n\Delta t\IL})x_0|\\
e_n^{(2)}&=\Delta t\sum_{\ell=0}^{n-1}\big|\IA_{\Delta t}^{n-\ell}\bigl(\overline{F}(\overline{\XX}_\ell)-\overline{F}(\overline{\XX}(t_\ell))\bigr)\big|\\
e_n^{(3)}&=\Delta t\sum_{\ell=0}^{n-1}\big|\bigl(\IA_{\Delta t}^{n-\ell}-e^{-(t_n-t_{\ell})\IL}\bigr)\overline{F}(\overline{\XX}(t_\ell))\big|\\
e_n^{(4)}&=\int_{0}^{t_n}\big|\bigl(e^{-(t_n-t_\ell)\IL}-e^{-(t_n-t)\IL}\bigr)\overline{F}(\overline{\XX}(t_\ell))\big|dt\\
e_n^{(5)}&=\sum_{\ell=0}^{n-1}\int_{t_{\ell}}^{t_{\ell+1}}\big|e^{-(t_n-t)\IL}\bigl(\overline{F}(\overline{\XX}(t_\ell))-\overline{F}(\overline{\XX}(t))\bigr)\big|dt.
\end{align*}

$\bullet$ Recall the inequality
\begin{equation}\label{eq:parabolicscheme}
\underset{n\in\N}\sup~\underset{z\in(0,\infty)}\sup~n\big|\frac{1}{(1+z)^n}-e^{-nz}\big|<\infty.
\end{equation}
As a consequence, for all $\kappa\in(0,1)$, there exists $C_\kappa$ such that one obtains
\[
e_n^{(1)}=\big|(\IA_{\Delta t}^n-e^{-n\Delta t\IL})x_0|\le \frac{C_\kappa}{n}|x_0|\le C_\kappa\frac{\Delta t^{1-\kappa}}{(n\Delta t)^{1-\kappa}}|x_0|
\]
for all $n\in\{1,\ldots,N\}$.

$\bullet$ Using the global Lipschitz continuity property of $\overline{F}$, one obtains
\begin{align*}
e_n^{(2)}&=\Delta t\sum_{\ell=0}^{n-1}\big|\IA_{\Delta t}^{n-\ell}\bigl(\overline{\F}(\overline{\XX}_\ell)-\overline{F}(\overline{\XX}(t_\ell))\bigr)\big|\\
&\le C\Delta t\sum_{\ell=0}^{n-1}|\overline{\XX}_\ell-\overline{\XX}(t_{\ell})|\\
&\le C\Delta t\sum_{\ell=0}^{n-1}e_\ell.
\end{align*}

$\bullet$ To deal with the third term, using the inequality~\eqref{eq:parabolicscheme}: one has
\begin{align*}
e_n^{(3)}&=\Delta t\sum_{\ell=0}^{n-1}\big|\bigl(\IA_{\Delta t}^{n-\ell}-e^{-(t_n-t_{\ell})\IL}\bigr)\overline{F}(\overline{\XX}(t_\ell))\big|\\
&\le C\Delta t\sum_{\ell=0}^{n-1}\frac{1}{(n-k)}|\overline{F}(\overline{\XX}_\ell)|\\
&\le C_\kappa\Delta t^{1-\kappa} \Delta t\sum_{\ell=0}^{n-1}\frac{1}{\bigl((n-k)\Delta t\bigr)^{1-\kappa}} \bigl(1+\underset{\ell=0,\ldots,N}\sup~|\overline{\XX}_\ell|\bigr)\\
&\le C_\kappa(T)\Delta t^{1-\kappa}\bigl(1+|x_0|\bigr),
\end{align*}
using the global Lipschitz continuous property of $\overline{F}$, and the bound~\eqref{eq:bound_auxiliaryscheme} from Lemma~\ref{lem:bounds_averaged-auxiliaryscheme}.

$\bullet$ To deal with the fourth term, the identity $e^{-(t_n-t_\ell)\IL}-e^{-(t_n-t)\IL}=e^{-(t_n-t)\IL}(e^{-(t-t_\ell)\IL}-I)$ is combined with the inequalities~\eqref{eq:smoothing} and~\eqref{eq:regularity}, one has
\begin{align*}
e_n^{(4)}&=\int_{0}^{t_n}\big|\bigl(e^{-(t_n-t_\ell)\IL}-e^{-(t_n-t)\IL}\bigr)\overline{F}(\overline{\XX}(t_\ell))\big|dt\\
&\le C_\kappa\int_{0}^{t_n}\frac{(t-t_\ell)^{1-\kappa}}{(t_n-t)^{1-\kappa}}|\overline{F}(\overline{\XX}(t_\ell))|dt\\
&\le C_\kappa\Delta t^{1-\kappa}\int_{0}^{T}\frac{1}{t^{1-\kappa}}dt\bigl(1+\underset{\ell=0,\ldots,N}\sup~|\overline{\XX}_\ell|\bigr)\\
&\le C_\kappa(T)\Delta t^{1-\kappa}\bigl(1+|x_0|\bigr)
\end{align*}
using the global Lipschitz continuous property of $\overline{F}$, and the bound~\eqref{eq:bound_auxiliaryscheme} from Lemma~\ref{lem:bounds_averaged-auxiliaryscheme}.

$\bullet$ To deal with the fifth term, using the global Lipschitz continuity property of $\overline{F}$ and the inequality~\eqref{eq:bound_averaged-increment}, one has
\begin{align*}
e_n^{(5)}&=\sum_{\ell=0}^{n-1}\int_{t_{\ell}}^{t_{\ell+1}}\big|e^{-(t_n-t)\IL}\bigl(\overline{F}(\overline{\XX}(t_\ell))-\overline{F}(\overline{\XX}(t))\bigr)\big|dt\\
&\le C\sum_{\ell=0}^{n-1}\int_{t_{\ell}}^{t_{\ell+1}}|\overline{\XX}(t_\ell)-\overline{\XX}(t)|dt\\
&\le C_\kappa(T)\sum_{\ell=1}^{n-1}\int_{t_{\ell}}^{t_{\ell+1}}(t-t_\ell)^{1-\kappa}(1+t_\ell^{-1+\kappa}|x_0|)dt\\
&+C\Delta t(1+\underset{0\le t\le T}\sup~|\overline{\XX}(t)|)\\
&\le C_\kappa(T)\Delta t^{1-\kappa}(1+\Delta t\sum_{\ell=0}^{n-1}\frac{1}{(\ell\Delta t)^{1-\kappa}}|x_0|+C\Delta t(1+|x_0|)\\
&\le C_\kappa(T)\Delta t^{1-\kappa}(1+|x_0|\bigr).
\end{align*}

$\bullet$ Gathering the estimates then gives
\[
e_n\le \Delta t\sum_{\ell=0}^{n-1}e_\ell+C_\kappa(T)\Delta t^{1-\kappa}(1+\frac{1}{(n\Delta t)^{1-\kappa}}|x_0|),
\]
and applying the discrete Gronwall lemma then concludes the proof of the inequality~\eqref{eq:error_auxiliaryscheme}.
\end{proof}

\subsection{Proof of Proposition~\ref{propo:error_fixedepsilon}}\label{sec:proof-UA2}

\begin{proof}[Proof of Proposition~\ref{propo:error_fixedepsilon}]
Recall that the mapping $u^\epsilon$ is defined by~\eqref{eq:uepsilon}, and is the solution of the Kolmogorov equation~\eqref{eq:Kolmogorov} with initial value $u^\epsilon(0,x,y)=\varphi(x)$. Without loss of generality, it is assumed that $\vvvert\varphi\vvvert_3\le 1$ to simplify notation. Recall also that $T=N\Delta t$. The weak error is written and then decomposed as follows, using a standard telescoping sum argument:
\begin{align*}
\E[\varphi(\XX_N^{\epsilon,\Delta t})]&-\E[\varphi(\XX^\epsilon(T))]=\E[u^\epsilon(0,\XX_N^{\epsilon,\Delta t},\YY_N^{\epsilon,\Delta t})]-\E[u^\epsilon(T,\XX_0^{\epsilon,\Delta t},\YY_0^{\epsilon,\Delta t})]\\
&=\sum_{n=0}^{N-1}\bigl(\E[u^\epsilon(T-t_{n+1},\XX_{n+1}^{\epsilon,\Delta t},\YY_{n+1}^{\epsilon,\Delta t})]-\E[u^\epsilon(T-t_n,\XX_n^{\epsilon,\Delta t},\YY_n^{\epsilon,\Delta t})]\bigr)\\
&=\sum_{n=0}^{N-1}\bigl(\E[u^\epsilon(T-t_{n+1},\XX_{n+1}^{\epsilon,\Delta t},\YY_{n+1}^{\epsilon,\Delta t})]-\E[u^\epsilon(T-t_{n+1},\XX_{n}^{\epsilon,\Delta t},\YY_{n+1}^{\epsilon,\Delta t})]\bigr)\\
&+\sum_{n=0}^{N-1}\bigl(\E[u^\epsilon(T-t_{n+1},\XX_{n}^\epsilon,\YY_{n+1}^{\epsilon,\Delta t})]-\E[u^\epsilon(T-t_n,\XX_n^\epsilon,\YY_n^{\epsilon,\Delta t})]\bigr).
\end{align*}

On the one hand, using a Taylor expansion argument and Lemma~\ref{lem:uepsilon}, one has
\begin{align*}
\E[u^\epsilon(T-t_{n+1},\XX_{n+1}^{\epsilon,\Delta t},\YY_{n+1}^{\epsilon,\Delta t})]&=\E[u^\epsilon(T-t_{n+1},\XX_{n}^{\epsilon,\Delta t},\YY_{n+1}^{\epsilon,\Delta t})]\\
&+\E[\langle D_xu^\epsilon(T-t_{n+1},\XX_n^{\epsilon,\Delta t},\YY_{n+1}^{\epsilon,\Delta t}),\XX_{n+1}^{\epsilon,\Delta t}-\XX_n^{\epsilon,\Delta t}\rangle]+r_n^{\epsilon,\Delta t},
\end{align*}
where, owing to the regularity estimate~\eqref{eq:lemuepsilon-2} from Lemma~\ref{lem:uepsilon}, for all $n\in\{0,\ldots,N-2\}$, one has
\[
|r_n^{\epsilon,\Delta t}|\le \frac{C_\kappa(T)}{(T-t_{n+1})^{1-\kappa}}\vvvert\varphi\vvvert_2\E\bigl[|\IL^{-1+\kappa}(\XX_{n+1}^{\epsilon,\Delta t}-\XX_n^{\epsilon,\Delta t})||\XX_{n+1}^{\epsilon,\Delta t}-\XX_n^{\epsilon,\Delta t}|\bigr].
\]
When $n=0$, using the inequality~\eqref{eq:incrementsscheme-bis} from Lemma~\ref{lem:incrementsscheme} and the moment bound~\eqref{eq:boundXnepsilonDeltat} from Lemma~\ref{lem:momentboundsscheme-epsilonDeltat}, one obtains
\[
|r_0^{\epsilon,\Delta t}|\le C_\kappa(T)\vvvert\varphi\vvvert_2\Delta t^{1-\kappa}(1+|x_0|^2).
\]
When $n\in\{1,\ldots,N-2\}$, using the inequalities~\eqref{eq:incrementsscheme} and~\eqref{eq:incrementsscheme-bis} from Lemma~\ref{lem:incrementsscheme}, one obtains
\[
|r_n^{\epsilon,\Delta t}|\le \frac{C_\kappa(T)}{(n\Delta t)^{1-\kappa}(T-t_{n+1})^{1-\kappa}}\vvvert\varphi\vvvert_2\Delta t^{2(1-\kappa)}(1+|x_0|^2).
\]
The case $n=N-1$ is treated differently: using the regularity estimate~\eqref{eq:lemuepsilon-2} with $\kappa=1$ and the inequality~\eqref{eq:incrementsscheme} from Lemma~\ref{lem:incrementsscheme} one has
\begin{align*}
|r_{N-1}^{\epsilon,\Delta t}|&\le C(T)\vvvert\varphi\vvvert_2\E[|\XX_{N}^{\epsilon,\Delta t}-\XX_{N-1}^{\epsilon,\Delta t}|^2]\\
&\le \frac{C_\kappa(T)}{\bigl((N-1)\Delta t\bigr)^{1-\kappa}}\vvvert\varphi\vvvert_2\Delta t^{1-\kappa}(1+|x_0|^2)\\
&\le C_\kappa(T)\vvvert\varphi\vvvert_2\Delta t^{1-\kappa}(1+|x_0|^2),
\end{align*}
using the lower bound $(N-1)\Delta t=T-\Delta t\ge T-\Delta t_0$.

Gathering the estimates, one obtains
\begin{equation}\label{eq:rn}
\sum_{n=0}^{N-1}|r_n^{\epsilon,\Delta t}|\le C_\kappa(T)\vvvert\varphi\vvvert_2\Delta t^{1-2\kappa}(1+|x_0|^2).
\end{equation}

On the other hand, introduce the auxiliary process $\bigl(\tilde{\YY}^{\epsilon,\Delta t}(t)\bigr)_{t\ge 0}$ defined as the solution of the stochastic evolution equation
\begin{equation}\label{eq:tildeY}
d\tilde{\YY}^{\epsilon,\Delta t}(t)=-\frac{1}{\epsilon}\IL_{\frac{\Delta t}{\epsilon}}\tilde{\YY}^{\epsilon,\Delta t}(t)dt+\sqrt{\frac{{2}}{{\epsilon}}}Q_{\frac{\Delta t}{\epsilon}}^{\frac12}dW(t),
\end{equation}
with initial value $\tilde{\YY}^{\epsilon,\Delta t}(0)=y_0^\epsilon$, where the linear operators $\IL_\tau$ and $Q_\tau$ with $\tau=\Delta t/\epsilon$ are given by~\eqref{eq:linearoperators}. By construction, one checks that for all $n\in\N$ one has the equality in distribution
\begin{equation}\label{eq:equalitytildeY}
\tilde{\YY}^{\epsilon,\Delta t}(t_n)=\YY_n^{\epsilon,\Delta t}.
\end{equation}
The equality above is based on the interpretation of the modified Euler scheme as the accelerated exponential Euler scheme applied to a modified stochastic evolution equation, see Section~\ref{sec:scheme} and~\cite[Section~3.3]{B} for details. More precisely, one has $\tilde{\YY}^{\epsilon,\Delta t}(t)=\IY^\tau(\frac{t}{\epsilon})$ for all $t\ge 0$ and $\tilde{\YY}_n^{\epsilon,\Delta t}=\YY_n^{\epsilon,\Delta t}=\IY_n^\tau=\IY^\tau(t_n^\tau)$, with $t_n^\tau=\frac{n\Delta t}{\epsilon}=\frac{t_n}{\epsilon}$, where the processes $\bigl(\IY^\tau(t)\bigr)_{t\ge 0}$ and $\bigl(\IY_n^\tau\bigr)_{n\ge 0}$ are defined by~\eqref{eq:auxOU} and~\eqref{eq:scheme-interp} respectively.

Owing to Assumption~\ref{ass:init}, for all $\kappa\in(0,\kappa_0)$, one has
\begin{equation}\label{eq:boundtildeYYkappa}
\underset{\epsilon\in(0,\epsilon_0)}\sup~\underset{\Delta t\in(0,\Delta t_0)}\sup~\underset{t\ge 0}\sup~\E[|\IL^{\frac{\kappa}{2}}\tilde{\YY}^{\epsilon,\Delta t}(t)|^2]<\infty.
\end{equation}
The proof is a consequence of It\^o's isometry formula and straightforward computations, see~\cite[Lemma~5.3]{B} for details.

The mild solution of the auxiliary stochastic evolution equation~\eqref{eq:tildeY} has the expression
\begin{equation}\label{eq:mildtildeYY}
\tilde{\YY}^{\epsilon,\Delta t}(t)=e^{-\frac{t}{\epsilon}\IL_{\frac{\Delta t}{\epsilon}}}y_0^\epsilon+\sqrt{\frac{{2}}{{\epsilon}}}\int_{0}^{t}e^{-\frac{t-s}{\epsilon}\IL_{\frac{\Delta t}{\epsilon}}}Q_{\frac{\Delta t}{\epsilon}}^{\frac12}dW(s),
\end{equation}
for all $t\ge 0$.

As a consequence of the equality~\eqref{eq:equalitytildeY} and using It\^o's formula, one obtains
\begin{align*}
\E[u^\epsilon(T-t_{n+1},&\XX_{n}^{\epsilon,\Delta t},\YY_{n+1}^{\epsilon,\Delta t})]-\E[u^\epsilon(T-t_n,\XX_n^{\epsilon,\Delta t},\YY_n^{\epsilon,\Delta t})]\\
&=\E[u^\epsilon(T-t_{n+1},\XX_{n}^{\epsilon,\Delta t},\tilde{\YY}^{\epsilon,\Delta t}(t_{n+1}))]-\E[u^\epsilon(T-t_n,\XX_n^{\epsilon,\Delta t},\tilde{\YY}^{\epsilon,\Delta t}(t_n))]\\
&=-\int_{t_n}^{t_{n+1}}\E[\partial_tu^\epsilon(T-t,\XX_n^{\epsilon,\Delta t},\tilde{\YY}^{\epsilon,\Delta t}(t))]dt\\
&-\frac{1}{\epsilon}\int_{t_n}^{t_{n+1}}\E[\langle D_yu^\epsilon(T-t,\XX_n^{\epsilon,\Delta t},\tilde{\YY}^{\epsilon,\Delta t}(t)),\IL_{\frac{\Delta t}{\epsilon}}\tilde{\YY}^{\epsilon,\Delta t}(t)\rangle]dt\\
&+\frac{1}{\epsilon}\int_{t_n}^{t_{n+1}}\sum_{j\in\N}\E[D_y^2u^\epsilon(T-t,\XX_n^{\epsilon,\Delta t},\tilde{\YY}^{\epsilon,\Delta t}(t)).\bigl(Q_{\frac{\Delta t}{\epsilon}}e_j,e_j\bigr)]dt.
\end{align*}

Since $u^\epsilon$ solves the Kolmogorov equation~\eqref{eq:Kolmogorov}, one obtains the following decomposition of the error terms
\begin{equation}
\E[u^\epsilon(T-t_{n+1},\XX_{n+1}^{\epsilon,\Delta t},\YY_{n+1}^{\epsilon,\Delta t})]-\E[u^\epsilon(T-t_n,\XX_n^{\epsilon,\Delta t},\YY_n^{\epsilon,\Delta t})]=e_n^{0,\epsilon,\Delta t}+e_n^{1,\epsilon,\Delta t}+e_n^{2,\epsilon,\Delta t}+r_n^{\epsilon,\Delta t},
\end{equation}
where the error terms $e_n^{0,\epsilon,\Delta t}$, $=e_n^{1,\epsilon,\Delta t}$ and $=e_n^{2,\epsilon,\Delta t}$ for $n\in\{0,\ldots,N-1\}$ are defined by
\begin{equation}\label{eq:en0}
\begin{aligned}
e_n^{0,\epsilon,\Delta t}&=\E[\langle D_xu^\epsilon(T-t_{n+1},\XX_n^{\epsilon,\Delta t},\YY_{n+1}^{\epsilon,\Delta t}),\XX_{n+1}^{\epsilon,\Delta t}-\XX_n^{\epsilon,\Delta t}\rangle]\\
&-\int_{t_n}^{t_{n+1}}\E[\langle D_xu^\epsilon(T-t,\XX_n^{\epsilon,\Delta t},\tilde{\YY}^{\epsilon,\Delta t}(t)),-\IL\XX_n^{\epsilon,\Delta t}+F(\XX_n^{\epsilon,\Delta t},\tilde{\YY}^{\epsilon,\Delta t}(t))\rangle]dt
\end{aligned}
\end{equation}
and by
\begin{align}
e_n^{1,\epsilon,\Delta t}&=-\frac{1}{\epsilon}\int_{t_n}^{t_{n+1}}\E[\langle D_yu^\epsilon(T-t,\XX_n^{\epsilon,\Delta t},\tilde{\YY}^{\epsilon,\Delta t}(t)),(\IL_{\frac{\Delta t}{\epsilon}}-\IL)\tilde{\YY}^{\epsilon,\Delta t}(t)\rangle]dt\label{eq:en1}\\
e_n^{2,\epsilon,\Delta t}&=\frac{1}{\epsilon}\int_{t_n}^{t_{n+1}}\sum_{j\in\N}\E[D_y^2u^\epsilon(T-t,\XX_n^{\epsilon,\Delta t},\tilde{\YY}^{\epsilon,\Delta t}(t)).\bigl((Q_{\frac{\Delta t}{\epsilon}}-I)e_j,e_j\bigr)]dt.\label{eq:en2}
\end{align}
Before proceeding with the proof of upper bounds for the error terms, it is necessary to introduce a further decomposition for $e_n^{0,\epsilon,\Delta t}$. Using the equalities
\begin{align*}
\XX_{n+1}^{\epsilon,\Delta t}-\XX_n^{\epsilon,\Delta t}&=
\bigl(\IA_{\Delta t}-I)\XX_n^{\epsilon,\Delta t}+\Delta t\IA_{\Delta t}F(\XX_n^{\epsilon,\Delta t},\YY_{n+1}^{\epsilon,\Delta t})\\
&=\bigl(\IA_{\Delta t}-I+\Delta t\IL)\XX_n^{\epsilon,\Delta t}\\
&+\Delta t(\IA_{\Delta t}-I)F(\XX_n^{\epsilon,\Delta t},\YY_{n+1}^{\epsilon,\Delta t})\\
&+\Delta t\bigl(-\IL\XX_n^{\epsilon,\Delta t}+F(\XX_n^{\epsilon,\Delta t},\YY_{n+1}^{\epsilon,\Delta t})\bigr),
\end{align*}
the error term $e_n^0$ is decomposed as
\[
e_n^{0,\epsilon,\Delta t}=e_n^{0,1,\epsilon,\Delta t}+e_n^{0,2,\epsilon,\Delta t}+e_n^{0,3,\epsilon,\Delta t}+e_n^{0,4,\epsilon,\Delta t},
\]
where the error terms in the right-hand side of the expression above are defined for $n\in\{0,\ldots,N-1\}$ by
\begin{align}
e_n^{0,1,\epsilon,\Delta t}&=\E[\langle D_xu^\epsilon(T-t_{n+1},\XX_n^{\epsilon,\Delta t},\YY_{n+1}^{\epsilon,\Delta t}),(\IA_{\Delta t}-I+\Delta t\IL)\XX_n^{\epsilon,\Delta t}\rangle]\label{eq:en01}\\
e_n^{0,2,\epsilon,\Delta t}&=\E[\langle D_xu^\epsilon(T-t_{n+1},\XX_n^{\epsilon,\Delta t},\YY_{n+1}^{\epsilon,\Delta t}),\Delta t(\IA_{\Delta t}-I)F(\XX_n^{\epsilon,\Delta t},\YY_{n+1}^{\epsilon,\Delta t})\rangle]\label{eq:en02}\\
e_n^{0,3,\epsilon,\Delta t}&=\int_{t_n}^{t_{n+1}}\E[\langle D_xu^\epsilon(T-t,\XX_n^{\epsilon,\Delta t},\tilde{\YY}^{\epsilon,\Delta t}(t)),\IL\XX_n^{\epsilon,\Delta t}\rangle]dt\label{eq:en03}\\
&-\Delta t\E[\langle D_xu^\epsilon(T-t_{n+1},\XX_n^{\epsilon,\Delta t},\YY^{\epsilon,\Delta t}(t_{n+1})),\IL\XX_n^{\epsilon,\Delta t}\rangle]\\
e_n^{0,4,\epsilon,\Delta t}&=\int_{t_n}^{t_{n+1}}\E[\langle D_xu^\epsilon(T-t_{n+1},\XX_n^{\epsilon,\Delta t},\tilde{\YY}^{\epsilon,\Delta t}(t_{n+1})),F(\XX_n^{\epsilon,\Delta t},\tilde{\YY}^{\epsilon,\Delta t}(t_{n+1}))\rangle]dt\label{eq:en04}\\
&-\int_{t_n}^{t_{n+1}}\E[\langle D_xu^\epsilon(T-t,\XX_n^{\epsilon,\Delta t},\tilde{\YY}^{\epsilon,\Delta t}(t)),F(\XX_n^{\epsilon,\Delta t},\tilde{\YY}^{\epsilon,\Delta t}(t))\rangle]dt.\nonumber
\end{align}

The weak error estimate is then a straightforward consequence of the following inequalities, which are proved below (using the conditions from Assumption~\ref{ass:init} on the initial values $x_0^\epsilon$ and $y_0^\epsilon$) there exists $C_\kappa(T)\in(0,\infty)$, such that for all $\epsilon\in(0,\epsilon_0)$ and $\Delta t\in(0,\Delta t_0)$, one has
\begin{align}
\sum_{n=0}^{N-1}|e_n^{1,\epsilon,\Delta t}|&\le C_\kappa(T)\Delta t(1+|\IL^{\frac{\kappa}{2}}x_0|)+C_\kappa(T)\bigl(\frac{\Delta t}{\epsilon}\bigr)^{\frac12-\kappa}(1+|\IL^{\frac{\kappa}{2}}x_0|)\label{eq:ineqen1}\\
\sum_{n=0}^{N-1}|e_n^{2,\epsilon,\Delta t}|&\le C_\kappa(T)\bigl(\frac{\Delta t}{\epsilon}\bigr)^{\frac12-\kappa}\label{eq:ineqen2}\\
\sum_{n=0}^{N-1}|e_n^{0,1,\epsilon,\Delta t}|&\le C_\kappa(T)\Delta t(1+|\IL^{\frac{\kappa}{2}}x_0|)+C_\kappa(T)\Delta t^{1-\kappa}(1+|x_0|)\label{eq:ineqen01}\\
\sum_{n=0}^{N-1}|e_n^{0,2,\epsilon,\Delta t}|&\le C_\kappa(T)\Delta t^{1-\kappa}(1+|x_0|)\label{eq:ineqen02}\\
\sum_{n=0}^{N-1}|e_n^{0,3,\epsilon,\Delta t}|&\le C_\kappa(T)\Delta t(1+|\IL^{\frac{\kappa}{2}} x_0|)+C_\kappa(T)\Bigl(\frac{\Delta t}{\epsilon}\Bigr)^{1-\kappa}\bigl(1+|\IL^{\frac{\kappa}{2}}x_0|^2\bigr)\label{eq:ineqen03}\\
\sum_{n=0}^{N-1}|e_n^{0,4,\epsilon,\Delta t}|&\le C_\kappa(T)\Bigl(\bigl(\frac{\Delta t}{\epsilon})^{\frac12-\kappa}+\bigl(\frac{\Delta t}{\epsilon})^{1-\kappa}+\frac{\Delta t}{\epsilon}\Bigr)\bigl(1+|\IL^{\frac{\kappa}{2}}x_0|^2\bigr).\label{eq:ineqen04}
\end{align}
Gathering the inequalities above then concludes the proof of Proposition~\ref{propo:error_fixedepsilon}.
\end{proof}

It remains to prove the inequalities~\eqref{eq:ineqen1}-\eqref{eq:ineqen04}. To simplify the notation, in the proofs below, the parameters $\epsilon,\Delta t$ are omitted when refering to the error terms defined above, or to other error terms introduced below: for instance one has $e_n^{1}=e_n^{1,\epsilon,\Delta t}$ in the proof of the inequality~\eqref{eq:ineqen1}.

\begin{proof}[Proof of the inequality~\eqref{eq:ineqen1}]
Recall that the error term $e_n^{1}=e_n^{1,\epsilon,\Delta t}$ is defined by~\eqref{eq:en1} for all $n\in\{0,\ldots,N-1\}$. Using~\eqref{eq:mildtildeYY}, the error term $e_n^1$ is decomposed as
\[
e_n^1=e_n^{1,1}+e_n^{1,2}
\]
with
\begin{align*}
e_n^{1,1}&=\frac{1}{\epsilon}\int_{t_n}^{t_{n+1}}\E[\langle D_yu^\epsilon(T-t,\XX_n^{\epsilon,\Delta t},\tilde{\YY}^{\epsilon,\Delta t}(t)),(\IL-\IL_{\frac{\Delta t}{\epsilon}})e^{-\frac{t}{\epsilon}\IL_{\frac{\Delta t}{\epsilon}}}y_0^\epsilon\rangle]dt\\
e_n^{1,2}&=\frac{1}{\epsilon}\int_{t_n}^{t_{n+1}}\E[\langle D_yu^\epsilon(T-t,\XX_n^{\epsilon,\Delta t},\tilde{\YY}^{\epsilon,\Delta t}(t)),(\IL-\IL_{\frac{\Delta t}{\epsilon}})\sqrt{\frac{{2}}{{\epsilon}}}\int_{0}^{t}e^{-\frac{t-s}{\epsilon}\IL_{\frac{\Delta t}{\epsilon}}}Q_{\frac{\Delta t}{\epsilon}}^{\frac12}dW(s)\rangle]dt.
\end{align*}

$\bullet$ {\it Error term $e_n^{1,1}$.}Owing to the regularity estimate~\eqref{eq:lemuepsilon-1} from Lemma~\ref{lem:uepsilon}, for all $n\in\{0,\ldots,N-1\}$, one has
\begin{align*}
|e_n^{1,1}|&\le \int_{t_n}^{t_{n+1}}\frac{C_\kappa(T)}{(T-t)^{1-\frac{\kappa}{2}}}\big|\IL^{-1+\frac{\kappa}{2}}(\IL-\IL_{\frac{\Delta t}{\epsilon}})e^{-\frac{t}{\epsilon}\IL_{\frac{\Delta t}{\epsilon}}}y_0^\epsilon\big|dt.
\end{align*}
The cases $n=0$ and $n\in\{1,\ldots,N-1\}$ are treated differently. On the one hand, owing to Assumption~\ref{ass:init}, one obtains
\[
|e_0^{1,1}|\le \int_{t_0}^{t_1}\frac{C_\kappa(T)}{(T-t)^{1-\frac{\kappa}{2}}}dt|\IL^{\frac{\kappa}{2}}y_0^\epsilon|\le C_\kappa(T)\Delta t(1+|\IL^{\frac{\kappa}{2}}x_0|).
\]
On the other hand, owing to the inequality (see~\cite[Lemma~5.1]{B})
\[
\underset{\tau\in(0,\infty)}\sup~\underset{t\in(\tau,\infty)}\sup~(t-\tau)^{\frac12-\frac{\kappa}{2}} \|\IL^{\frac12-\frac{\kappa}{2}} e^{-t\IL_\tau}\|_{\mathcal{L}(H)}<\infty,
\]
and to the inequalities~\eqref{eq:error-eigenvalues}, for all $n\in\{1,\ldots,N-1\}$, one obtains
\begin{align*}
|e_n^{1,1}|&\le \int_{t_n}^{t_{n+1}}\frac{C_\kappa(T)}{(T-t)^{1-\frac{\kappa}{2}}}\frac{\epsilon^{\frac12-\frac{\kappa}{2}}}{(t-\Delta t)^{\frac12-\frac{\kappa}{2}}}dt |\IL^{-\frac32+\kappa}(\IL-\IL_{\frac{\Delta t}{\epsilon}})y_0^\epsilon|\\
&\le \int_{t_n}^{t_{n+1}}\frac{C_\kappa(T)}{(T-t)^{1-\frac{\kappa}{2}}}\frac{\epsilon^{1-\frac{\kappa}{2}}}{(t-\Delta t)^{1-\frac{\kappa}{2}}}dt\frac{\Delta t^{\frac12-\kappa}}{\epsilon^{\frac12-\kappa}}|\IL^{\frac{\kappa}{2}}y_0^\epsilon|\\
&\le \int_{t_n}^{t_{n+1}}\frac{C_\kappa(T)}{(T-t)^{1-\frac{\kappa}{2}}}\frac{1}{(t-\Delta t)^{1-\frac{\kappa}{2}}}dt\Delta t^{\frac12-\kappa}(1+|\IL^{\frac{\kappa}{2}}x_0|).
\end{align*}
using Assumption~\ref{ass:init}.

$\bullet$ {\it Error term $e_n^{1,2}$.} Using the Malliavin integration by parts formula~\eqref{eq:MalliavinIBP}, one obtains
\begin{align*}
e_n^{1,2}&=\int_{t_n}^{t_{n+1}}\sum_{j\in\N}\E[\langle D_yu^\epsilon(T-t,\XX_n^{\epsilon,\Delta t},\tilde{\YY}^{\epsilon,\Delta t}(t)),e_j\rangle \int_{0}^{t}e^{-\frac{t-s}{\epsilon}\lambda_{\frac{\Delta t}{\epsilon},j}}d\beta_j(s)]dt \frac{(\lambda_j-\lambda_{\frac{\Delta t}{\epsilon},j})}{\epsilon}\sqrt{\frac{{2q_{\frac{\Delta t}{\epsilon},j}}}{{\epsilon}}}\\
&=\frac{1}{\epsilon}\int_{t_n}^{t_{n+1}}\int_0^t\sum_{j\in\N}\E[\mathcal{D}_s^{e_j}\bigl(\langle D_yu^\epsilon(T-t,\XX_n^{\epsilon,\Delta t},\tilde{\YY}^{\epsilon,\Delta t}(t)),e_j\rangle\bigr)]\mathcal{I}_j^{\epsilon,\Delta t}(t,s)dsdt,
\end{align*}
where for all $t\ge 0$ and $j\in\N$ one has
\[
\mathcal{I}_j^{\epsilon,\Delta t}(t,s)=e^{-\frac{t-s}{\epsilon}\lambda_{\frac{\Delta t}{\epsilon},j}} (\lambda_j-\lambda_{\frac{\Delta t}{\epsilon},j})\sqrt{\frac{{2q_{\frac{\Delta t}{\epsilon},j}}}{{\epsilon}}}\ge 0.
\]
Note that for all $t\ge 0$ and $j\in\N$ one has
\[
\int_0^t\mathcal{I}_j^{\epsilon,\Delta t}(t,s)ds\le \frac{(\lambda_j-\lambda_{\frac{\Delta t}{\epsilon},j})}{\lambda_{\frac{\Delta t}{\epsilon},j}}\sqrt{2q_{\frac{\Delta t}{\epsilon},j}\epsilon}.
\]
Using the chain rule, one obtains
\[
e_n^{1,2}=e_n^{1,2,1}+e_n^{1,2,2},
\]
for all $n\in\{0,\ldots,N-1\}$, where
\begin{align*}
e_n^{1,2,1}&=\frac{1}{\epsilon}\int_{t_n}^{t_{n+1}}\int_0^t\sum_{j\in\N}\E[D_xD_yu^\epsilon(T-t,\XX_n^{\epsilon,\Delta t},\tilde{\YY}^{\epsilon,\Delta t}(t)).\bigl(\mathcal{D}_s^{e_j}\XX_n^{\epsilon,\Delta t},e_j\bigr)]\mathcal{I}_j^{\epsilon,\Delta t}(t,s)dsdt\\
e_n^{1,2,2}&=\frac{1}{\epsilon}\int_{t_n}^{t_{n+1}}\int_0^t\sum_{j\in\N}\E[D_y^2u^\epsilon(T-t,\XX_n^{\epsilon,\Delta t},\tilde{\YY}^{\epsilon,\Delta t}(t)).\bigl(\mathcal{D}_s^{e_j}\tilde{\YY}^{\epsilon,\Delta t}(t),e_j\bigr)]\mathcal{I}_j^{\epsilon,\Delta t}(t,s)dsdt.
\end{align*}

For all $h\in H$ and $t\ge s\ge 0$, the random variable $\mathcal{D}_s^{h}\tilde{\YY}^{\epsilon,\Delta t}(t)$ satisfies
\[
\mathcal{D}_s^{h}\tilde{\YY}^{\epsilon,\Delta t}(t)=\sqrt{\frac{{2}}{{\epsilon}}}e^{-\frac{t-s}{\epsilon}\IL_{\frac{\Delta t}{\epsilon}}}Q_{\frac{\Delta t}{\epsilon}}^{\frac12}h.
\]
In particular, one obtains the inequality
\begin{equation}\label{eq:DsY}
|\mathcal{D}_s^{e_j}\tilde{\YY}^{\epsilon,\Delta t}(t)|\le \sqrt{\frac{{2q_{\frac{\Delta t}{\epsilon},j}}}{{\epsilon}}}
\end{equation}
for all $t\ge s\ge 0$ and $j\in\N$. Using the chain rule and the definition~\eqref{eq:scheme} of the scheme, for all $h\in H$, if $s\ge t_n$, one has
\[
\mathcal{D}_s^h\XX_{n+1}^{\epsilon,\Delta t}=\IA_{\Delta t}\mathcal{D}_s^h\XX_{n}^{\epsilon,\Delta t}+\Delta t\IA_{\Delta t}D_xF(\XX_n^{\epsilon,\Delta t},\YY_{n+1}^{\epsilon,\Delta t}).\mathcal{D}_s^h\XX_n^{\epsilon,\Delta t}+\Delta t\IA_{\Delta t}D_yF(\XX_n^{\epsilon,\Delta t},\YY_{n+1}^{\Delta t}).\mathcal{D}_s^h\YY_{n+1}^{\epsilon,\Delta t}
\]
and $\mathcal{D}_s^h\XX_n^{\epsilon,\Delta t}=0$ if $t_n<s$. Using the identity $\YY_{n+1}^{\epsilon,\Delta t}=\tilde{\YY}^{\epsilon,\Delta t}(t_n)$, the inequality~\eqref{eq:DsY} above, the boundedness of $D_xF$ and $D_yF$ (Assumption~\ref{ass:F}), one obtains the upper bound
\begin{equation}\label{eq:DsX}
|\mathcal{D}_s^{e_j}\XX_n^{\epsilon,\Delta t}|\le C(T)\sqrt{\frac{{2q_{\frac{\Delta t}{\epsilon},j}}}{{\epsilon}}}
\end{equation}
for all $s\in[0,T]$, $n\in\{0,\ldots,N-1\}$ and $j\in\N$.

Using the regularity estimates~\eqref{eq:lemuepsilon-2} from Lemma~\ref{lem:uepsilon} with $\alpha_1=0$ and $\alpha_2=1-\frac{\kappa}{2}$, and the inequalities~\eqref{eq:DsY} and~\eqref{eq:DsX}, one thus obtains the upper bound
\begin{align*}
|e_n^{1,2,1}|+|e_n^{1,2,2}|&\le \int_{t_n}^{t_{n+1}}\frac{C_\kappa(T)}{(T-t)^{1-\frac{\kappa}{2}}}\sum_{j\in\N}\sqrt{\frac{{2q_{\frac{\Delta t}{\epsilon},j}}}{{\epsilon}}}\lambda_j^{-1+\frac{\kappa}{2}}\int_0^t\mathcal{I}_j^{\epsilon,\Delta t}(t,s)dsdt\\
&\le \int_{t_n}^{t_{n+1}}\frac{C_\kappa(T)}{(T-t)^{1-\frac{\kappa}{2}}}dt\sum_{j\in\N}\frac{q_{\frac{\Delta t}{\epsilon},j}}{\lambda_{\frac{\Delta t}{\epsilon},j}}\lambda_j^{-1+\frac{\kappa}{2}}(\lambda_j-\lambda_{\frac{\Delta t}{\epsilon},j}).
\end{align*}
Using the identity $q_{\frac{\Delta t}{\epsilon},j}=\frac{\lambda_{\frac{\Delta t}{\epsilon},j}}{\lambda_j}$ and the inequality~\eqref{eq:error-eigenvalues} (with $\alpha=\frac12-\kappa$), one then obtains, for all $n\in\{0,\ldots,N-1\}$, the upper bound
\begin{align*}
|e_n^{1,2}|\le |e_n^{1,2,1}|+|e_n^{1,2,2}|\le \int_{t_n}^{t_{n+1}}\frac{C_\kappa(T)}{(T-t)^{1-\frac{\kappa}{2}}}dt \bigl(\frac{\Delta t}{\epsilon}\bigr)^{\frac12-\kappa}\sum_{j\in\N}\lambda_j^{-\frac12-\frac{\kappa}{2}},
\end{align*}
with $\sum_{j\in\N}\lambda_j^{-\frac12-\frac{\kappa}{2}}<\infty$.

Gathering the estimates for the error terms $e_n^{1,1}$ and $e_n^{1,2}$ and summing for $n\in\{0,\ldots,N-1\}$ then concludes the proof of the inequality~\eqref{eq:ineqen1} for the error term $e_n^1=e_n^{1,1}+e_n^{1,2}$.
\end{proof}

\begin{proof}[Proof of the inequality~\eqref{eq:ineqen2}]
Recall that the error term $e_n^{2}=e_n^{2,\epsilon,\Delta t}$ is defined by~\eqref{eq:en2} for all $n\in\{0,\ldots,N-1\}$. Owing to the regularity estimate~\eqref{eq:lemuepsilon-2} from Lemma~\ref{lem:uepsilon} and to the inequalities~\eqref{eq:error-eigenvalues} (with $\alpha=\frac12-\kappa$), for all $n\in\{0,\ldots,N-1\}$, one has
\begin{align*}
|e_n^2|&\le\frac{1}{\epsilon}\int_{t_n}^{t_{n+1}}\sum_{j\in\N}\E[\Big|D_y^2u^\epsilon(T-t,\XX_n^{\epsilon,\Delta t},\tilde{\YY}^{\epsilon,\Delta t}(t)).\bigl((Q_{\frac{\Delta t}{\epsilon}}-I)e_j,e_j\bigr)\Big|]dt\\
&\le \int_{t_n}^{t_{n+1}}\frac{C_\kappa(T)}{(T-t)^{1-\frac{\kappa}{2}}}dt \sum_{j\in\N}\frac{1-q_{\frac{\Delta t}{\epsilon},j}}{\lambda_j^{1-\frac{\kappa}{2}}}\\
&\le \int_{t_n}^{t_{n+1}}\frac{C_\kappa(T)}{(T-t)^{1-\frac{\kappa}{2}}}dt \bigl(\frac{\Delta t}{\epsilon}\bigr)^{\frac12-\kappa}\sum_{j\in\N}\frac{1}{\lambda_j^{1+\frac{\kappa}{2}}}\\
&\le \int_{t_n}^{t_{n+1}}\frac{C_\kappa(T)}{(T-t)^{1-\frac{\kappa}{2}}}dt\bigl(\frac{\Delta t}{\epsilon}\bigr)^{\frac12-\kappa}.
\end{align*}
Summing for $n\in\{0,\ldots,N-1\}$ then concludes the proof of the inequality~\eqref{eq:ineqen2}.
\end{proof}

\begin{proof}[Proof of the inequality~\eqref{eq:ineqen01}]
Recall that the error term $e_n^{0,1}e=n^{0,1,\epsilon,\Delta t}$ is defined by~\eqref{eq:en01} for all $n\in\{0,\ldots,N-1\}$. The cases $n\in\{1,\ldots,N-2\}$ and $n\in\{0,N-1\}$ are treated differently.

On the one hand, if $n\in\{0,\ldots,N-2\}$, owing to the regularity estimate~\eqref{eq:lemuepsilon-1} from Lemma~\ref{lem:uepsilon} (with $\alpha=1-\frac{\kappa}{2}$), one obtains
\begin{align*}
|e_n^{0,1}|&\le \frac{C_\kappa(T)}{(T-t_{n+1})^{1-\frac{\kappa}{2}}}\E[|\IL^{-1+\frac{\kappa}{2}}(\IA_{\Delta t}-I+\Delta t\IL)\XX_n^{\epsilon,\Delta t}|]\\
&\le \frac{C_\kappa(T)}{(T-t_{n+1})^{1-\frac{\kappa}{2}}}\E[|\IL^{-2+\kappa}(\IA_{\Delta t}-I+\Delta t\IL)\IL^{1-\frac{\kappa}{2}}\XX_n^{\epsilon,\Delta t}|]\\
&\le \frac{C_\kappa(T)\Delta t}{(T-t_{n+1})^{1-\frac{\kappa}{2}}}\Delta t^{1-\kappa}\E[|\IL^{1-\frac{\kappa}{2}}\XX_n^{\epsilon,\Delta t}|].
\end{align*}
Using the moment bound~\eqref{eq:momentschemepower}, if $n\in\{1,\ldots,N-2\}$, one obtains
\[
|e_n^{0,1}|\le \frac{C_\kappa(T)\Delta t}{(T-t_{n+1})^{1-\frac{\kappa}{2}}}\Delta t^{1-\kappa}\frac{1}{t_n^{1-\frac{\kappa}{2}}}(1+|x_0|).
\]
If $n=0$, using Assumption~\ref{ass:init} one obtains
\begin{align*}
|e_0^{0,1}|&\le \frac{C_\kappa(T)\Delta t}{(T-t_{1})^{1-\frac{\kappa}{2}}}|\IL^{-1+\frac{\kappa}{2}}(\IA_{\Delta t}-I+\Delta t\IL)x_0^{\epsilon}|\\
&\le C_\kappa(T)\Delta t|\IL^{\frac{\kappa}{2}}x_0^\epsilon|\\
&\le C_\kappa(T)\Delta t(1+|\IL^{\frac{\kappa}{2}}x_0|).
\end{align*}

On the other hand, if $n=N-1$, owing to the regularity estimate~\eqref{eq:lemuepsilon-1} from Lemma~\ref{lem:uepsilon} (with $\alpha=0$), one obtains
\begin{align*}
|e_{N-1}^{0,1}|&\le C(T)\E[|(\IA_{\Delta t}-I+\Delta t\IL)\XX_{N-1}^{\epsilon,\Delta t}|]\\
&\le C(T)\Delta t\E[|\IL\XX_{N-1}^{\epsilon,\Delta t}|].
\end{align*}
Using the identity
\[
\XX_{N-1}^{\epsilon,\Delta t}=\IA_{\Delta t}^{N-1} x_0^\epsilon+\Delta t\sum_{\ell=0}^{N-2}\IA_{\Delta t}^{N-1-\ell}F(\XX_\ell^{\epsilon,\Delta t},\YY_\ell^{\epsilon,\Delta t}),
\]
see the proof of Lemma~\ref{lem:incrementsscheme}, and the moment bounds~\eqref{eq:boundXnepsilonDeltat} and~\eqref{eq:boundYnepsilonDeltat} from Lemma~\ref{lem:momentboundsscheme-epsilonDeltat}, one obtains (with $(N-1)\Delta t=T-\Delta t\ge T-\Delta t_0$)
\[
\E[|\IL\XX_{N-1}^{\epsilon,\Delta t}|]\le \frac{C}{(N-1)\Delta t}|x_0^\epsilon|+\Delta t\sum_{\ell=0}^{N-2}\frac{C}{(N-1-\ell)\Delta t}(1+|x_0|)\le C(T)\Delta t^{-\kappa}(1+|x_0|),
\]
which gives
\[
|e_{N-1}^{0,1}|\le C(T)\Delta t^{1-\kappa}(1+|x_0|).
\]

Gathering the estimates and summing for $n\in\{0,\ldots,N-1\}$ then concludes the proof of the inequality~\eqref{eq:ineqen01}.
\end{proof}

\begin{proof}[Proof of the inequality~\eqref{eq:ineqen02}]
Recall that the error term $e_n^{0,2}=e_n^{0,2,\epsilon,\Delta t}$ is defined by~\eqref{eq:en02} for all $n\in\{0,\ldots,N-1\}$. The cases $n\in\{0,\ldots,N-2\}$ and $n=N-1$ are treated differently.

On the one hand, if $n\in\{0,\ldots,N-2\}$, owing to the regularity estimate~\eqref{eq:lemuepsilon-1} from Lemma~\ref{lem:uepsilon} (with $\alpha=1-\kappa$), one obtains
\begin{align*}
|e_n^{0,2}|&\le \frac{C_\kappa(T)\Delta t}{(T-t_{n+1})^{1-\kappa}}\E[|\IL^{-1+\kappa}(\IA_{\Delta t}-I)F(\XX_n^{\epsilon,\Delta t},\YY_{n+1}^{\epsilon,\Delta t})|]\\
&\le \frac{C_\kappa(T)\Delta t}{(T-t_{n+1})^{1-\kappa}}\Delta t^{1-\kappa}\E[|F(\XX_n^{\epsilon,\Delta t},\YY_{n+1}^{\epsilon,\Delta t})|]\\
&\le \frac{C_\kappa(T)\Delta t}{(T-t_{n+1})^{1-\kappa}}\Delta t^{1-\kappa}(1+|x_0|),
\end{align*}
using the moment bounds~\eqref{eq:boundXnepsilonDeltat} and~\eqref{eq:boundYnepsilonDeltat} from Lemma~\ref{lem:momentboundsscheme-epsilonDeltat}.

On the other hand, if $n=N-1$, owing to the regularity estimate~\eqref{eq:lemuepsilon-1} from Lemma~\ref{lem:uepsilon} (with $\alpha=0$), one obtains
\[
|e_{N-1}^{0,2}|\le C(T)\Delta t\E[|(\IA_{\Delta t}-I)F(\XX_{N-1}^{\epsilon,\Delta t},\YY_N^{\epsilon,\Delta t})|]\le C(T)\Delta t(1+|x_0|),
\]
using the moment bounds~\eqref{eq:boundXnepsilonDeltat} and~\eqref{eq:boundYnepsilonDeltat} from Lemma~\ref{lem:momentboundsscheme-epsilonDeltat}.

Gathering the estimates and summing for $n\in\{0,\ldots,N-1\}$ then concludes the proof of the inequality~\eqref{eq:ineqen02}.
\end{proof}

\begin{proof}[Proof of the inequality~\eqref{eq:ineqen03}]
Recall that the error term $e_n^{0,3}=e_n^{0,3,\epsilon,\Delta t}$ is defined by~\eqref{eq:en03} for all $n\in\{0,\ldots,N-1\}$. The proof of the inequality~\eqref{eq:ineqen03} requires more delicate arguments than the proofs of the inequalities obtained above. The cases $n\in\{1,\ldots,N-1\}$ and $n=0$ are treated differently.

Introduce the auxiliary mapping $v^{\epsilon,\Delta t}:[0,T]\times H^2\to\R$ defined as follows: for all $t\in[0,T]$, $x,y\in H$, set
\begin{equation}
v^{\epsilon,\Delta t}(t,x,y)=\langle D_xu^\epsilon(T-t,x,y),e^{-\frac{\Delta t}{\epsilon}\IL}\IL x\rangle.
\end{equation}
For all $n\in\{1,\ldots,N-1\}$, the error term $e_n^{0,3}$ can then be decomposed as
\[
e_n^{0,3}=e_n^{0,3,1}+e_n^{0,3,2}+e_n^{0,3,3},
\]
where
\begin{align*}
e_n^{0,3,1}&=\int_{t_n}^{t_{n+1}}\E[\langle D_xu^\epsilon(T-t,\XX_n^{\epsilon,\Delta t},\tilde{\YY}^{\epsilon,\Delta t}(t)),(I-e^{-\frac{\Delta t}{\epsilon}\IL})\IL\XX_n^{\epsilon,\Delta t}\rangle] dt\\
e_n^{0,3,2}&=\Delta t\E[\langle D_xu^\epsilon(T-t_{n+1},\XX_n^{\epsilon,\Delta t},\tilde{\YY}^{\epsilon,\Delta t}(t_{n+1})),(I-e^{-\frac{\Delta t}{\epsilon}\IL})\IL\XX_n^{\epsilon,\Delta t}\rangle]\\
e_n^{0,3,3}&=\int_{t_n}^{t_{n+1}}\bigl(\E[v^{\epsilon,\Delta t}(t,\XX_n^{\epsilon,\Delta t},\tilde{\YY}^{\epsilon,\Delta t}(t))]-\E[v^{\epsilon,\Delta t}(t_{n+1},\XX_n^{\epsilon,\Delta t},\tilde{\YY}^{\epsilon,\Delta t}(t_{n+1}))]\bigr)dt.
\end{align*}

$\bullet$ {\it Error term $e_n^{0,3,1}$.} Owing to the regularity estimate~\eqref{eq:lemuepsilon-1} from Lemma~\ref{lem:uepsilon} (with $\alpha=1-\frac{\kappa}{2}$), to the inequality~\eqref{eq:regularity} and to the moment bound~\eqref{eq:momentschemepower} from Lemma~\ref{lem:incrementsscheme}, one obtains for all $n\in\{1,\ldots,N-1\}$
\begin{align*}
|e_n^{0,3,1}|&\le \int_{t_n}^{t_{n+1}}\frac{C_\kappa(T)}{(T-t)^{1-\frac{\kappa}{2}}}dt\E[|(I-e^{-\frac{\Delta t}{\epsilon}\IL})\IL^{\frac{\kappa}{2}}\XX_n^{\epsilon,\Delta t}|]\\
&\le \int_{t_n}^{t_{n+1}}\frac{C_\kappa(T)}{(T-t)^{1-\kappa}}dt\bigl(\frac{\Delta t}{\epsilon}\bigr)^{1-\kappa}\E[|\IL^{1-\frac{\kappa}{2}}\XX_n^{\epsilon,\Delta t}|]\\
&\le \int_{t_n}^{t_{n+1}}\frac{C_\kappa(T)}{(T-t)^{1-\kappa}}dt\bigl(\frac{\Delta t}{\epsilon}\bigr)^{1-\kappa} \frac{1}{(n\Delta t)^{1-\frac{\kappa}{2}}}(1+|x_0|).
\end{align*}

$\bullet$ {\it Error term $e_n^{0,3,2}$.} The cases $n\in\{1,\ldots,N-2\}$ and $n=N-1$ are treated differently.

On the one hand, owing to the regularity estimate~\eqref{eq:lemuepsilon-1} from Lemma~\ref{lem:uepsilon} (with $\alpha=1-\frac{\kappa}{2}$), to the inequality~\eqref{eq:regularity} and to the moment bound~\eqref{eq:momentschemepower} from Lemma~\ref{lem:incrementsscheme} one obtains for all $n\in\{1,\ldots,N-2\}$
\begin{align*}
|e_n^{0,3,2}|&\le \frac{C_\kappa(T)\Delta t}{(T-t_{n+1})^{1-\frac{\kappa}{2}}}\E[|(I-e^{-\frac{\Delta t}{\epsilon}\IL})\IL^{\frac{\kappa}{2}}\XX_n^{\epsilon,\Delta t}|]\\
&\le \int_{t_n}^{t_{n+1}}\frac{C_\kappa(T)}{(T-t)^{1-\kappa}}dt\bigl(\frac{\Delta t}{\epsilon}\bigr)^{1-\kappa} \frac{1}{(n\Delta t)^{1-\frac{\kappa}{2}}}(1+|x_0|).
\end{align*}
On the other hand, owing to the regularity estimate~\eqref{eq:lemuepsilon-1} from Lemma~\ref{lem:uepsilon} (with $\alpha=0$), one obtains
\begin{align*}
|e_{N-1}^{0,3,2}|&\le C(T)\Delta t\E[|(I-e^{-\frac{\Delta t}{\epsilon}\IL})\IL\XX_{N-1}^{\epsilon,\Delta t}|]\\
&\le C(T)\Delta t^{1-\kappa}(1+|x_0|).
\end{align*}

$\bullet$ {\it Error term $e_n^{0,3,3}$.} Recall that the process $\bigl(\tilde{\YY}^{\epsilon,\Delta t}(t)\bigr)_{t\ge 0}$ is the solution of the stochastic evolution equation~\eqref{eq:tildeY}. Applying It\^o's formula, for all $n\in\{1,\ldots,N-1\}$ and $t\in[t_n,t_{n+1}]$, one has
\begin{align*}
\E[v^{\epsilon,\Delta t}(t,\XX_n^{\epsilon,\Delta t},\tilde{\YY}^{\epsilon,\Delta t}(t))]&-\E[v^{\epsilon,\Delta t}(t_{n+1},\XX_n^{\epsilon,\Delta t},\tilde{\YY}^{\epsilon,\Delta t}(t_{n+1}))]\\
&=-\int_{t}^{t_{n+1}}\E[\partial_tv^{\epsilon,\Delta t}(s,\XX_n^{\epsilon,\Delta t},\tilde{\YY}^{\epsilon,\Delta t}(s))]ds\\
&+\frac{1}{\epsilon}\int_{t}^{t_{n+1}}\E[\langle D_yv^{\epsilon,\Delta t}(s,\XX_n^{\epsilon,\Delta t},\tilde{\YY}^{\epsilon,\Delta t}(s)),\IL_{\frac{\Delta t}{\epsilon}}\tilde{\YY}^{\epsilon,\Delta t}(s)\rangle]ds\\
&-\frac{1}{\epsilon}\int_{t}^{t_{n+1}}\sum_{j\in\N}q_{\frac{\Delta t}{\epsilon},j}\E[D_y^2v^{\epsilon,\Delta t}(s,\XX_n^{\epsilon,\Delta t},\tilde{\YY}^{\epsilon,\Delta t}(s)).(e_j,e_j)]ds.
\end{align*}
Therefore, the error term $e_n^{0,3,3}$ is decomposed as
\[
e_n^{0,3,3}=e_n^{0,3,3,1}+e_n^{0,3,3,2}+e_n^{0,3,3,3}
\]
with for all $n\in\{1,\ldots,N-1\}$ one has
\begin{align*}
e_n^{0,3,3,1}&=\int_{t_n}^{t_{n+1}}\int_{t}^{t_{n+1}}\E[\partial_tv^{\epsilon,\Delta t}(s,\XX_n^{\epsilon,\Delta t},\tilde{\YY}^{\epsilon,\Delta t}(s))]dsdt\\
e_n^{0,3,3,2}&=\int_{t_n}^{t_{n+1}}\frac{1}{\epsilon}\int_{t}^{t_{n+1}}\E[\langle D_yv^{\epsilon,\Delta t}(s,\XX_n^{\epsilon,\Delta t},\tilde{\YY}^{\epsilon,\Delta t}(s)),\IL_{\frac{\Delta t}{\epsilon}}\tilde{\YY}^{\epsilon,\Delta t}(s)\rangle]dsdt\\
e_n^{0,3,3,3}&=-\int_{t_n}^{t_{n+1}}\frac{1}{\epsilon}\int_{t}^{t_{n+1}}\sum_{j\in\N}q_{\frac{\Delta t}{\epsilon},j}D_y^2v^{\epsilon,\Delta}(s,\XX_n^{\epsilon,\Delta t},\tilde{\YY}^{\epsilon,\Delta t}(s)).(e_j,e_j)dsdt.
\end{align*}

For the error term $e_n^{0,3,3,1}$, note that, owing to the regularity estimate~\eqref{eq:lemuepsilon-3} from Lemma~\ref{lem:uepsilon}, for all $t\in[0,T)$ and $x,y\in H^{\kappa/2}$, one has
\begin{align*}
|\partial_tv^{\epsilon,\Delta t}(t,x,y)|&=|\langle \partial_tD_xu^\epsilon(T-t,x,y),\IL e^{-\frac{\Delta t}{\epsilon}\IL}x\rangle|\\
&\le \frac{C_\kappa(T)}{\epsilon (T-t)^{1-\frac{\kappa}{2}}}\bigl(1+|\IL^{\frac{\kappa}{2}} x|+|\IL^{\frac{\kappa}{2}} y|\bigr)|\IL^{1+\frac{\kappa}{2}}e^{-\frac{\Delta t}{\epsilon}\IL}x|\\
&\le \frac{C_\kappa(T)}{\Delta t^{\kappa}\epsilon^{1-\kappa} (T-t)^{1-\frac{\kappa}{2}}}\bigl(1+|\IL^{\frac{\kappa}{2}}x|+|\IL^{\frac{\kappa}{2}}y|\bigr)|\IL^{1-\frac{\kappa}{2}}x|,
\end{align*}
using the smoothing inequality~\eqref{eq:smoothing} in the last step. As a consequence, using the moment bounds~\eqref{eq:momentschemepowerbis} and~\eqref{eq:boundtildeYYkappa}, one obtains, for all $n\in\{1,\ldots,N-1\}$,
\begin{align*}
|e_n^{0,3,3,1}|&\le \int_{t_n}^{t_{n+1}}\int_{t}^{t_{n+1}}\E[|\partial_tv^{\epsilon,\Delta t}(s,\XX_n^{\epsilon,\Delta },\tilde{\YY}^{\epsilon,\Delta t}(s))|]dsdt\\
&\le \int_{t_n}^{t_{n+1}}\int_{t}^{t_{n+1}}\frac{C_\kappa(T)}{\Delta t^{\kappa}\epsilon^{1-\kappa} (T-s)^{1-\frac{\kappa}{2}}}\E[\bigl(1+|\IL^{\frac{\kappa}{2}}\XX_n^{\epsilon,\Delta t}|+|\IL^{\frac{\kappa}{2}}\tilde{\YY}^{\epsilon,\Delta t}(s)|\bigr)|\IL^{1-\frac{\kappa}{2}}\XX_n^{\Delta t,\epsilon}|]dsdt\\
&\le \int_{t_n}^{t_{n+1}}\int_{t}^{t_{n+1}}\frac{C_\kappa(T)}{\Delta t^{\kappa}\epsilon^{1-\kappa} (T-s)^{1-\frac{\kappa}{2}}}dsdt\bigl(1+|\IL^{\frac{\kappa}{2}}x_0|+|\IL^{\frac{\kappa}{2}}y_0^\epsilon|\bigr)\frac{1}{(n\Delta t)^{1-\frac{\kappa}{2}}}|x_0|\\
&\le \bigl(\frac{\Delta t}{\epsilon}\bigr)^{1-\kappa}\int_{t_n}^{t_{n+1}}\frac{C_\kappa(T)}{(T-s)^{1-\frac{\kappa}{2}}}ds\bigl(1+|\IL^{\frac{\kappa}{2}}x_0^\epsilon|\bigr)^2\frac{1}{(n\Delta t)^{1-\frac{\kappa}{2}}}.
\end{align*}

For the error term $e_n^{0,3,3,2}$, note that, owing to the regularity estimate~\eqref{eq:lemuepsilon-2} from Lemma~\ref{lem:uepsilon} (with $\alpha_1=0$ and $\alpha_2=1-\kappa/2$) and to the inequality~\eqref{eq:smoothing}, for all $t\in[0,T)$ and $x,y\in H$, one has
\begin{align*}
|\langle D_yv^{\epsilon,\Delta t}(t,x,y),h\rangle|=|D_xD_yu^\epsilon(t,x,y).(e^{-\frac{\Delta t}{\epsilon}\IL}\IL x,h)|\le \frac{C_\kappa(T)\epsilon}{(T-t)^{1-\frac{\kappa}{2}}}\big(\frac{\epsilon}{\Delta t}\bigr)^{\frac{\kappa}{2}}|\IL^{1-\frac{\kappa}{2}}x||\IL^{-1+\frac{\kappa}{2}}h|.
\end{align*}
As a consequence, using the moment bounds~\eqref{eq:momentschemepower} and~\eqref{eq:boundtildeYYkappa}, and the inequality $\lambda_{\tau,j}\le \lambda_j$ for all $j\in\N$ and $\tau\in(0,\infty)$ one obtains, for all $n\in\{1,\ldots,N-1\}$,
\begin{align*}
|e_n^{0,3,3,2}|&\le \int_{t_n}^{t_{n+1}}\int_{t}^{t_{n+1}} \frac{1}{\epsilon}\E[|\langle D_yv^{\epsilon,\Delta t}(s,\XX_n^{\epsilon,\Delta t},\tilde{\YY}^{\epsilon,\Delta t}(s)),\IL_{\frac{\Delta t}{\epsilon}}\tilde{\YY}^{\epsilon,\Delta t}(s)\rangle|]dsdt\\
&\le \int_{t_n}^{t_{n+1}}\int_{t}^{t_{n+1}}\frac{C_\kappa(T)\epsilon^{\frac{\kappa}{2}}}{(T-s)^{1-\frac{\kappa}{2}}\Delta t^\frac{\kappa}{2}}\E[|\IL^{1-\frac{\kappa}{2}}\XX_n^{\epsilon,\Delta t}||\IL^{-1+\frac{\kappa}{2}}\IL_{\frac{\Delta t}{\epsilon}}\tilde{\YY}^{\epsilon,\Delta t}(s)|]dsdt\\
&\le \int_{t_n}^{t_{n+1}}\int_{t}^{t_{n+1}}\frac{C_\kappa(T)\epsilon^{\frac{\kappa}{2}}}{(T-s)^{1-\frac{\kappa}{2}}}\Delta t^{\frac{\kappa}{2}}\E[|\IL^{1-\frac{\kappa}{2}}\XX_n^{\epsilon,\Delta t}||\IL^{\frac{\kappa}{2}}\tilde{\YY}^{\epsilon,\Delta t}(s)|]dsdt\\
&\le \Delta t^{1-\frac{\kappa}{2}}\int_{t_n}^{t_{n+1}}\frac{C_\kappa(T)}{(T-s)^{1-{\frac{\kappa}{2}}}}ds\frac{1}{(n\Delta t)^{1-\frac{\kappa}{2}}}(1+|\IL^{\frac{\kappa}{2}}x_0|)^2.
\end{align*}

For the error term $e_n^{0,3,3,3}$, note that, owing to the regularity estimate~\eqref{eq:lemuepsilon-aux} from the proof of Lemma~\ref{lem:uepsilon} and to the inequality~\eqref{eq:smoothing}, for all $j\in\N$, $t\in[0,T)$ and $x,y\in H$, one has
\begin{align*}
|D^yv^{\epsilon,\Delta t}(t,x,y).(e_j,e_j)|&=|D_xD_y^2v^{\epsilon,\Delta t}(T-t,x,y).(e^{-\frac{\Delta t}{\epsilon}\IL}\IL x,e_j,e_j)|\\
&\le \frac{C_\kappa(T)}{(T-t)^{1-\kappa}}\bigl(\frac{\epsilon}{\Delta t}\bigr)^{\kappa}|\IL^{1-\kappa}x|\lambda_j^{-1+\kappa}.
\end{align*}
As a consequence, using the moment bound~\eqref{eq:momentschemepower}, one obtains, for all $n\in\{1,\ldots,N-1\}$,
\begin{align*}
|e_n^{0,3,3,3}|&\le \frac{1}{\epsilon}\int_{t_n}^{t_{n+1}}\int_{t}^{t_{n+1}}\sum_{j\in\N}q_{\frac{\Delta t}{\epsilon},j}\E[D_y^2v^{\epsilon,\Delta t}(s,\XX_n^{\epsilon,\Delta t},\tilde{\YY}^{\epsilon,\Delta t}(s)).(e_j,e_j)|]dsdt\\
&\le \frac{1}{\epsilon}\int_{t_n}^{t_{n+1}}\int_{t}^{t_{n+1}}\frac{C_\kappa(T)}{(T-s)^{1-\kappa}}dsdt\bigl(\frac{\epsilon}{\Delta t}\bigr)^{\kappa}\E[|\IL^{1-\kappa}\XX_n^{\epsilon,\Delta t}|]\sum_{j\in\N}q_{\frac{\Delta t}{\epsilon},j}\lambda_j^{-1+\kappa}\\
&\le \bigl(\frac{\Delta t}{\epsilon}\bigr)^{1-\kappa}\int_{t_n}^{t_{n+1}}\frac{C_\kappa(T)}{(T-s)^{1-\kappa}}ds\frac{1}{(n\Delta t)^{1-\kappa}}(1+|x_0|)
\end{align*}

Gathering the estimates, for all $n\in\{1,\ldots,N-1\}$, one then obtains the inequality
\begin{align*}
|e_n^{0,3,3}|&\le |e_n^{0,3,3,1}|+|e_n^{0,3,3,2}|+|e_n^{0,3,3,3}|\\
&\le \Bigl(\frac{\Delta t}{\epsilon}\Bigr)^{1-\kappa}\int_{t_n}^{t_{n+1}}\frac{C_\kappa(T)}{(T-s)^{1-\frac{\kappa}{2}}}ds\bigl(1+|\IL^{\frac{\kappa}{2}}x_0|\bigr)^2\frac{1}{(n\Delta t)^{1-\frac{\kappa}{2}}}.
\end{align*}

$\bullet$ {\it Error term $e_0^{0,3}$.} Note that, owing to the regularity estimate~\eqref{eq:lemuepsilon-1} from Lemma~\ref{lem:uepsilon}, one has
\begin{align*}
|e_0^{0,3}|&\le \int_{t_0}^{t_{1}}\E[|\langle D_xu^\epsilon(T-t,x_0^{\epsilon,\Delta t},\tilde{\YY}^{\epsilon,\Delta t}(t)),\IL x_0^\epsilon\rangle|]dt\\
&+\Delta t\E[|\langle D_xu^\epsilon(T-t_{1},x_0^\epsilon,\YY^{\epsilon,\Delta t}(t_{1})),\IL x_0^{\epsilon,\Delta t}\rangle|]\\
&\le C(T)\Delta t(1+|\IL^{\frac{\kappa}{2}}x_0|).
\end{align*}
Gathering the estimates for the error terms $e_n^{0,3,1}$, $e_n^{0,3,2}$ and $e_n^{0,3,3}$ and summing for $n\in\{0,\ldots,N-1\}$, the proof of the inequality~\eqref{eq:ineqen03} is thus completed.
\end{proof}

\begin{proof}[Proof of the inequality~\eqref{eq:ineqen04}]
Recall that the error term $e_n^{0,4}=e_n^{0,4,\epsilon,\Delta t}$ is defined by~\eqref{eq:en04} for all $n\in\{0,\ldots,N-1\}$. The cases $n\in\{1,\ldots,N-1\}$ and $n=0$ are treated differently.

Like in the proof of the inequality~\eqref{eq:ineqen03}, it is necessary to introduce an auxiliary mapping $w^{\epsilon,\Delta t}:[0,T]\times H^2\to\R$ defined as follows: for all $t\in[0,T]$, $x,y\in H$, set
\begin{equation}
w^{\epsilon,\Delta t}(t,x,y)=\langle D_xu^\epsilon(T-t,x,y),e^{-\frac{\Delta t}{\epsilon}\IL} F(x,y)\rangle.
\end{equation}
For all $n\in\{1,\ldots,N-1\}$, the error term $e_n^{0,3}$ is then decomposed as
\[
e_n^{0,4}=e_n^{0,4,1}+e_n^{0,4,2}+e_n^{0,4,3},
\]
where, for all $n\in\{1,\ldots,N-1\}$, one has
\begin{align*}
e_n^{0,4,1}&=\int_{t_n}^{t_{n+1}}\E[\langle D_xu^\epsilon(T-t,\XX_n^{\epsilon,\Delta t},\tilde{\YY}^{\epsilon,\Delta t}(t)),(I-e^{-\frac{\Delta t}{\epsilon}\IL})F(\XX_n^{\epsilon,\Delta t},\tilde{\YY}^{\epsilon,\Delta t}(t))\rangle] dt\\
e_n^{0,4,2}&=\Delta t\E[\langle D_xu^\epsilon(T-t_{n+1},\XX_n^{\epsilon,\Delta t},\tilde{\YY}^{\epsilon,\Delta t}(t_{n+1})),(I-e^{-\frac{\Delta t}{\epsilon}\IL})F(\XX_n^{\epsilon,\Delta t},\tilde{\YY}^{\epsilon,\Delta t}(t_{n+1}))\rangle]\\
e_n^{0,4,3}&=\int_{t_n}^{t_{n+1}}\bigl(\E[w^{\epsilon,\Delta t}(t,\XX_n^{\epsilon,\Delta t},\tilde{\YY}^{\epsilon,\Delta t}(t))]-\E[w^{\epsilon,\Delta t}(t_{n+1},\XX_n^{\epsilon,\Delta t},\tilde{\YY}^{\epsilon,\Delta t}(t_{n+1}))]\bigr)dt
\end{align*}

$\bullet$ {\it Error term $e_n^{0,4,1}$.} Owing to the regularity estimate~\eqref{eq:lemuepsilon-1} from Lemma~\ref{lem:uepsilon} (with $\alpha=1-\kappa$) and to the inequality~\eqref{eq:regularity}, for all $n\in\{1,\ldots,N-1\}$, one has
\begin{align*}
|e_n^{0,4,1}|&\le \int_{t_n}^{t_{n+1}}\frac{C_\kappa(T)}{(T-t)^{1-\kappa}}\E[|\IL^{-1+\kappa}(I-e^{-\frac{\Delta t}{\epsilon}\IL})F(\XX_n^{\epsilon,\Delta t},\tilde{\YY}^{\epsilon,\Delta t}(t))|]dt\\
&\le \bigl(\frac{\Delta t}{\epsilon}\bigr)^{1-\kappa}\int_{t_n}^{t_{n+1}}\frac{C_\kappa(T)}{(T-t)^{1-\kappa}}\E[|F(\XX_n^{\epsilon,\Delta t},\tilde{\YY}^{\epsilon,\Delta t}(t))|]dt\\
&\le \bigl(\frac{\Delta t}{\epsilon}\bigr)^{1-\kappa}\int_{t_n}^{t_{n+1}}\frac{C_\kappa(T)}{(T-t)^{1-\kappa}}dt(1+|x_0|),
\end{align*}
using the Lipschitz continuity of $F$ and the moment bounds~\eqref{eq:boundYnepsilonDeltat} and~\eqref{eq:boundXnepsilonDeltat} from Lemma~\ref{lem:momentboundsscheme-epsilonDeltat}.

$\bullet$ {\it Error term $e_n^{0,4,2}$.} The cases $n\in\{1,\ldots,N-2\}$ and $n=N-1$ are treated differently.

On the one hand, owing to the regularity estimate~\eqref{eq:lemuepsilon-1} from Lemma~\ref{lem:uepsilon} (with $\alpha=1-\kappa$) and to the inequality~\eqref{eq:regularity}, for all $n\in\{0,\ldots,N-2\}$, one has
\begin{align*}
|e_n^{0,4,2}|&\le \frac{C_\kappa(T)\Delta t}{(T-t_{n+1})^{1-\kappa}}\E[|\IL^{-1+\kappa}(I-e^{-\frac{\Delta t}{\epsilon}\IL})F(\XX_n^{\epsilon,\Delta t},\tilde{\YY}^{\epsilon,\Delta t}(t_{n+1}))|]\\
&\le \bigl(\frac{\Delta t}{\epsilon}\bigr)^{1-\kappa}\frac{C_\kappa(T)\Delta t}{(T-t_{n+1})^{1-\kappa}}\E[|F(\XX_n^{\epsilon,\Delta t},\tilde{\YY}^{\epsilon,\Delta t}(t_{n+1}))|]dt\\
&\le \bigl(\frac{\Delta t}{\epsilon}\bigr)^{1-\kappa}\frac{C_\kappa(T)\Delta t}{(T-t_{n+1})^{1-\kappa}}dt(1+|x_0|),
\end{align*}
using the Lipschitz continuity of $F$ and the moment bounds~\eqref{eq:boundYnepsilonDeltat} and~\eqref{eq:boundXnepsilonDeltat} from Lemma~\ref{lem:momentboundsscheme-epsilonDeltat}.

On the other hand, owing to the regularity estimate~\eqref{eq:lemuepsilon-1} from Lemma~\ref{lem:uepsilon} (with $\alpha=0$), one has
\begin{align*}
|e_{N-1}^{0,4,2}|&\le C(T)\Delta t\E[|(I-e^{-\frac{\Delta t}{\epsilon}\IL})F(\XX_{N-1}^{\epsilon,\Delta t},\tilde{\YY}^{\epsilon,\Delta t}(t_{N}))|]\\
&\le C(T)\Delta t(1+|x_0|)
\end{align*}
using the Lipschitz continuity of $F$ and the moment bounds~\eqref{eq:boundYnepsilonDeltat} and~\eqref{eq:boundXnepsilonDeltat} from Lemma~\ref{lem:momentboundsscheme-epsilonDeltat}.

$\bullet$ {\it Error term $e_n^{0,4,3}$.} Applying It\^o's formula, for all $n\in\{1,\ldots,N-1\}$ and $t\in[t_n,t_{n+1}]$, one has
\begin{align*}
\E[w^{\epsilon,\Delta t}(t,\XX_n^{\epsilon,\Delta t},\tilde{\YY}^{\epsilon,\Delta t}(t))]&-\E[w^{\epsilon,\Delta t}(t_{n+1},\XX_n^{\epsilon,\Delta t},\tilde{\YY}^{\epsilon,\Delta t}(t_{n+1}))]\\
&=-\int_{t}^{t_{n+1}}\E[\partial_tw^{\epsilon,\Delta t}(s,\XX_n^{\epsilon,\Delta },\tilde{\YY}^{\epsilon,\Delta t}(s))]ds\\
&+\frac{1}{\epsilon}\int_{t}^{t_{n+1}}\E[\langle D_yw^{\epsilon,\Delta t}(s,\XX_n^{\epsilon,\Delta t},\tilde{\YY}^{\epsilon,\Delta t}(s)),\IL_{\frac{\Delta t}{\epsilon}}\tilde{\YY}^{\epsilon,\Delta t}(s)\rangle]ds\\
&-\frac{1}{\epsilon}\int_{t}^{t_{n+1}}\sum_{j\in\N}q_{\frac{\Delta t}{\epsilon},j}\E[D_y^2w^{\epsilon,\Delta t}(s,\XX_n^{\epsilon,\Delta t},\tilde{\YY}^{\epsilon,\Delta t}(s)).(e_j,e_j)]ds.
\end{align*}
Therefore, one has the decomposition $e_n^{0,4,3}=e_n^{0,4,3,1}+e_n^{0,4,3,2}+e_n^{0,4,3,3}$, with
\begin{align*}
e_n^{0,4,3,1}&=-\int_{t_n}^{t_{n+1}}\int_{t}^{t_{n+1}}\E[\partial_tw^{\epsilon,\Delta t}(s,\XX_n^{\epsilon,\Delta t},\tilde{\YY}^{\epsilon,\Delta t}(s))]dsdt\\
e_n^{0,4,3,2}&=\int_{t_n}^{t_{n+1}}\frac{1}{\epsilon}\int_{t}^{t_{n+1}}\E[\langle D_yw^{\epsilon,\Delta t}(s,\XX_n^{\epsilon,\Delta t},\tilde{\YY}^{\epsilon,\Delta t}(s)),\IL_{\frac{\Delta t}{\epsilon}}\tilde{\YY}^{\epsilon,\Delta t}(s)\rangle]dsdt\\
e_n^{0,4,3,3}&=-\int_{t_n}^{t_{n+1}}\frac{1}{\epsilon}\int_{t}^{t_{n+1}}\sum_{j\in\N}q_{\frac{\Delta t}{\epsilon},j}D_y^2w^{\epsilon,\Delta t}(s,\XX_n^{\epsilon,\Delta t},\tilde{\YY}^{\epsilon,\Delta t}(s)).(e_j,e_j)dsdt.
\end{align*}

For the error term $e_n^{0,4,3,1}$, note that, owing to the regularity estimate~\eqref{eq:lemuepsilon-3} from Lemma~\ref{lem:uepsilon}, for all $t\in[0,T]$ and $x,y\in H^{\frac{\kappa}{2}}$, one has
\begin{align*}
|\partial_tw^{\epsilon,\Delta t}(t,x,y)|&=|\langle \partial_tD_xu^\epsilon(T-t,x,y),e^{-\frac{\Delta t}{\epsilon}\IL}F(x,y)\rangle|\\
&\le \frac{C_\kappa(T)}{\epsilon(T-t)^{1-\frac{\kappa}{2}}}\bigl(1+|\IL^{\frac{\kappa}{2}} x|+|\IL^{\frac{\kappa}{2}} y|)|\IL^{\frac{\kappa}{2}}e^{-\frac{\Delta t}{\epsilon}\IL}F(x,y)|\\
&\le \frac{C_\kappa(T)}{\Delta t^{\frac{\kappa}{2}}\epsilon^{1-\frac{\kappa}{2}}(T-t)^{1-\frac{\kappa}{2}}}\bigl(1+|\IL^{\frac{\kappa}{2}}x|+|\IL^{\frac{\kappa}{2}}y|)^2,
\end{align*}
using the smoothing inequality~\eqref{eq:smoothing} and the linear growth of $F$ in the last step. As a consequence, using the moment bounds~\eqref{eq:momentschemepower} and~\eqref{eq:boundtildeYYkappa}, for all $n\in\{1,\ldots,N-1\}$, one obtains
\begin{align*}
|e_n^{0,4,3,1}|&\le \int_{t_n}^{t_{n+1}}\int_{t}^{t_{n+1}}\E[|\partial_tw^{\epsilon,\Delta t}(s,\XX_n^{\epsilon,\Delta t},\tilde{\YY}^{\epsilon,\Delta t}(s))|]dsdt\\
&\le \int_{t_n}^{t_{n+1}}\int_{t}^{t_{n+1}} \frac{C_\kappa(T)}{\Delta^{\frac{\kappa}{2}}\epsilon^{1-\frac{\kappa}{2}}(T-s)^{1-\frac{\kappa}{2}}}\bigl(1+\E[|\IL^{\frac{\kappa}{2}}\XX_n^{\epsilon,\Delta t}|^2]+\E[|\IL^{\frac{\kappa}{2}}\tilde{\YY}^{\epsilon,\Delta t}(s)|^2]\bigr) dsdt\\
&\le \bigl(\frac{\Delta t}{\epsilon}\bigr)^{1-\kappa}\int_{t_n}^{t_{n+1}}\frac{C_\kappa(T)}{(T-s)^{1-\frac{\kappa}{2}}}ds\frac{1}{(n\Delta t)^{1-\kappa}}\bigl(1+|\IL^{\frac{\kappa}{2}}x_0|\bigr)^2.
\end{align*}

For the error term $e_n^{0,4,3,2}$, like for the treatment of the error term $e_n^{1,2}$ above (proof of the inequality~\eqref{eq:ineqen1}), the Malliavin integration by parts formula~\eqref{eq:MalliavinIBP} is employed. Recall that the mild solution $\bigl(\tilde{\YY}^{\epsilon,\Delta t}(t)\bigr)_{t\ge 0}$ of the stochastic evolution equation~\eqref{eq:tildeY} is given by~\eqref{eq:mildtildeYY}. The error term $e_n^{0,4,3,2}$ is then decomposed as
\[
e_n^{0,4,3,2}=e_n^{0,4,3,2,1}+e_n^{0,4,3,2,2},
\]
where for all $n\in\{1,\ldots,N-1\}$ one has
\begin{align*}
e_n^{0,4,3,2,1}&=\frac{1}{\epsilon}\int_{t_n}^{t_{n+1}}\int_{t}^{t_{n+1}}\E[\langle D_yw^{\epsilon,\Delta t}(s,\XX_n^{\epsilon,\Delta t},\tilde{\YY}^{\epsilon,\Delta t}(s)),\IL_{\frac{\Delta t}{\epsilon}}e^{-\frac{s}{\epsilon}\IL_{\frac{\Delta t}{\epsilon}}}y_0^\epsilon\rangle]dsdt\\
e_n^{0,4,3,2,2}&=\frac{1}{\epsilon}\int_{t_n}^{t_{n+1}}\int_{t}^{t_{n+1}}\E[\langle D_yw^{\epsilon,\Delta t}(s,\XX_n^{\epsilon,\Delta t},\tilde{\YY}^{\epsilon,\Delta t}(s)),\IL_{\frac{\Delta t}{\epsilon}}\sqrt{\frac{{2}}{{\epsilon}}}\int_{0}^{s}e^{-\frac{s-r}{\epsilon}\IL_{\frac{\Delta t}{\epsilon}}}Q_{\frac{\Delta t}{\epsilon}}^{\frac12}dW(r)\rangle]dsdt.
\end{align*}

To deal with the error term $e_n^{0,4,3,2,1}$, note that for all $t\in[0,T]$, $x,y\in H$ and $h\in H$, one has
\begin{align*}
|\langle D_yw^{\epsilon,\Delta t}(t,x,y),h\rangle|&\le\big|\langle D_xu^\epsilon(T-t,x,y),e^{-\frac{\Delta t}{\epsilon}\IL} D_yF(x,y).h\rangle|\\
&+|D_xD_yu^\epsilon(T-t,x,y).(e^{-\frac{\Delta t}{\epsilon}\IL}F(x,y),h)|\\
&\le C(T)\vvvert\varphi\vvvert_2(1+|x|+|y|)|h|.
\end{align*}
Owing to the regularity estimates~\eqref{eq:lemuepsilon-1} and~\ref{eq:lemuepsilon-2} from Lemma~\ref{lem:uepsilon}. Therefore, using the moment bounds~\eqref{eq:boundXnepsilonDeltat} and~\eqref{eq:boundtildeYYkappa}, one obtains
\begin{align*}
|e_n^{0,4,3,2,1}|&\le\frac{1}{\epsilon}\int_{t_n}^{t_{n+1}}\int_{t}^{t_{n+1}}\E[|\langle D_yw^{\epsilon,\Delta t}(s,\XX_n^{\epsilon,\Delta t},\tilde{\YY}^{\epsilon,\Delta t}(s)),\IL_{\frac{\Delta t}{\epsilon}}e^{-\frac{s}{\epsilon}\IL_{\frac{\Delta t}{\epsilon}}}y_0^\epsilon\rangle|]dsdt\\
&\le \frac{C(T)}{\epsilon}\int_{t_n}^{t_{n+1}}\int_{t}^{t_{n+1}}\E[\bigl(1+|\XX_n^{\epsilon,\Delta t}|+|\tilde{\YY}^{\epsilon,\Delta t}(t)|\bigr)|\IL_{\frac{\Delta t}{\epsilon}}e^{-\frac{s}{\epsilon}\IL_{\frac{\Delta t}{\epsilon}}}y_0^\epsilon|]dsdt\\
&\le \frac{C_\kappa(T)\Delta t}{\epsilon}\int_{t_n}^{t_{n+1}}\frac{\epsilon^{1-\frac{\kappa}{2}}}{s^{1-\frac{\kappa}{2}}}ds(1+|\IL^{\frac{\kappa}{2}}x_0|)^2,
\end{align*}
using the inequality
\[
|\IL_{\frac{\Delta t}{\epsilon}}e^{-\frac{s}{\epsilon}\IL_{\frac{\Delta t}{\epsilon}}}y_0^\epsilon|\le C_\kappa(T)\frac{\epsilon^{1-\kappa}}{s^{1-\kappa}}|\IL_{\frac{\Delta t}{\epsilon}}^{\kappa}y_0^\epsilon|\le C_\kappa(T)\frac{\epsilon^{1-\frac{\kappa}{2}}}{s^{1-\frac{\kappa}{2}}}|\IL^{\frac{\kappa}{2}}y_0^\epsilon|
\]
which follows from a version of the smoothing inequality~\eqref{eq:smoothing} applied to the linear operator $\IL_{\frac{\Delta t}{\epsilon}}$ instead of $\IL$ and its associated semi-group, and from the inequality $\lambda_{\tau,j}\le \lambda_j$ for all $j\in\N$ and $\tau\in(0,\infty)$.

To deal with the error term $e_n^{0,4,3,2,2}$, applying the Malliavin integration by parts formula~\eqref{eq:MalliavinIBP}, one obtains
\begin{align*}
&e_n^{0,4,3,2,2}\\
&=\frac{\sqrt{2}}{\epsilon^{\frac{3}{2}}}\int_{t_n}^{t_{n+1}}\int_{t}^{t_{n+1}}\sum_{j\in\N}\E[\langle D_yw^{\epsilon,\Delta t}(s,\XX_n^{\epsilon,\Delta t},\tilde{\YY}^{\epsilon,\Delta t}(s)),e_j\rangle \lambda_{\frac{\Delta t}{\epsilon},j}\int_{0}^{s}e^{-\frac{s-r}{\epsilon}\lambda_{\frac{\Delta t}{\epsilon},j}}\sqrt{q_{\frac{\Delta t}{\epsilon},j}}d\beta_j(r)]dsdt\\
&=\frac{\sqrt{2}}{\epsilon^{\frac{3}{2}}}\int_{t_n}^{t_{n+1}}\int_{t}^{t_{n+1}}\int_0^s\sum_{j\in\N}\E[\mathcal{D}_r^{e_j}\bigl(\langle D_yw^{\epsilon,\Delta t}(s,\XX_n^{\epsilon,\Delta t},\tilde{\YY}^{\epsilon,\Delta t}(s)),e_j\rangle\bigr)] \lambda_{\frac{\Delta t}{\epsilon},j}e^{-\frac{s-r}{\epsilon}\lambda_{\frac{\Delta t}{\epsilon},j}}\sqrt{q_{\frac{\Delta t}{\epsilon},j}}drdsdt\\
&=e_n^{0,4,3,2,2,1}+e_n^{0,4,3,2,2,2}
\end{align*}
where, using the chain rule, one has
\begin{align*}
e_n^{0,4,3,2,2,1}&=\frac{\sqrt{2}}{\epsilon\sqrt{\epsilon}}\int_{t_n}^{t_{n+1}}\int_{t}^{t_{n+1}}\int_0^s\sum_{j\in\N} d_{n,j}^{x,\epsilon,\Delta t}(r,s)\lambda_{\frac{\Delta t}{\epsilon},j}e^{-\frac{s-r}{\epsilon}\lambda_{\frac{\Delta t}{\epsilon},j}}\sqrt{q_{\frac{\Delta t}{\epsilon},j}}drdsdt\\
e_n^{0,4,3,2,2,2}&=\frac{\sqrt{2}}{\epsilon\sqrt{\epsilon}}\int_{t_n}^{t_{n+1}}\int_{t}^{t_{n+1}}\int_0^s\sum_{j\in\N}d_{n,j}^{y,\epsilon,\Delta t}(r,s)\lambda_{\frac{\Delta t}{\epsilon},j}e^{-\frac{s-r}{\epsilon}\lambda_{\frac{\Delta t}{\epsilon},j}}\sqrt{q_{\frac{\Delta t}{\epsilon},j}}drdsdt
\end{align*}
with
\begin{align*}
d_{n,j}^{x,\epsilon,\Delta t}(r,s)&=\E[D_xD_yw^{\epsilon,\Delta t}(s,\XX_n^{\epsilon,\Delta t},\tilde{\YY}^{\epsilon,\Delta t}(s)).\bigl(\mathcal{D}_r^{e_j}\XX_n^{\epsilon,\Delta t},e_j\bigr)]\\
d_{n,j}^{y,\epsilon,\Delta t}(r,s)&=\E[D_y^2w^{\epsilon,\Delta t}(s,\XX_n^{\epsilon,\Delta t},\tilde{\YY}^{\epsilon,\Delta t}(s)).\bigl(\mathcal{D}_r^{e_j}\tilde{\YY}^{\epsilon,\Delta t}(s),e_j\bigr)].
\end{align*}

On the one hand, note that for all $t\in[0,T]$, $x,y\in H$ and $h^1,h^2\in H$, one has
\begin{align*}
D_xD_yw^{\epsilon,\Delta}(t,x,y).(h^1,h^2)&=\langle D_xu^\epsilon(T-t,x,y),e^{-\frac{\Delta t}{\epsilon}\IL}D_xD_yF(x,y).(h^1,h^2)\rangle\\
&+D_x^2u^\epsilon(T-t,x,y).(h^1,e^{-\frac{\Delta t}{\epsilon}\IL}D_yF(x,y).h^2)\\
&+D_xD_yu^\epsilon(T-t,x,y).(e^{-\frac{\Delta t}{\epsilon}\IL}D_xF(x,y).h^1,h^2)\\
&+D_x^2D_yu^\epsilon(T-t,x,y).(h^1,e^{-\frac{\Delta t}{\epsilon}\IL}F(x,y),h^2).
\end{align*}
Owing to the regularity estimates~\eqref{eq:lemuepsilon-1},~\eqref{eq:lemuepsilon-2} and~\eqref{eq:lemuepsilon-aux} from Lemma~\ref{lem:uepsilon} and its proof, and to the properties of $F$ stated in Assumption~\ref{ass:F}, one has the upper bound
\[
|D_xD_y^2w^{\epsilon,\Delta t}(t,x,y).(h^1,h^2)|\le C(T)(1+|x|+|y|)|h^1||h^2|.
\]
Using the bound~\eqref{eq:DsX} for the Malliavin derivative $\mathcal{D}_r^{e_j}\XX_n^{\epsilon,\Delta t}$ (see the proof of the inequality~\eqref{eq:ineqen2}) and the moment bounds~\eqref{eq:boundXnepsilonDeltat} from Lemma~\ref{lem:momentboundsscheme-epsilonDeltat} and~\eqref{eq:boundtildeYYkappa}, one has
\begin{align*}
|e_n^{0,4,3,2,2,1}|&\le \frac{C(T)}{\epsilon^2}(1+|x_0|)\int_{t_n}^{t_{n+1}}\int_{t}^{t_n}\int_0^s\sum_{j\in\N}\lambda_{\frac{\Delta t}{\epsilon},j}e^{-\frac{s-r}{\epsilon}\lambda_{\frac{\Delta t}{\epsilon},j}}q_{\frac{\Delta t}{\epsilon},j}drdsdt\\
&\le \frac{C(T)}{\epsilon}(1+|x_0|)\int_{t_n}^{t_{n+1}}\int_{t}^{t_n}\sum_{j\in\N}q_{\frac{\Delta t}{\epsilon},j}dsdt\\
&\le \frac{C(T)\Delta t^2}{\epsilon}(1+|x_0|)\vvvert\varphi\vvvert_3\sum_{j\in\N}q_{\frac{\Delta t}{\epsilon},j}.
\end{align*}

On the other hand, note that for all $t\in[0,T]$, $x,y\in H$ and $h^1,h^2\in H$, one has
\begin{align*}
D_y^2w^{\epsilon,\Delta t}(t,x,y).(h^1,h^2)&=D_xD_y^2 u^\epsilon(T-t,x,y).\bigl(e^{-\frac{\Delta t}{\epsilon}\IL}F(x,y),h^1,h^2\bigr)\\
&+D_xD_yu^\epsilon(T-t,x,y).\bigl(e^{-\frac{\Delta t}{\epsilon}\IL}D_yF(x,y).h^1,h^2\bigr)\\
&+D_xD_yu^\epsilon(T-t,x,y).\bigl(e^{-\frac{\Delta t}{\epsilon}\IL}D_yF(x,y).h^2,h^1\bigr)\\
&+\langle D_xu^\epsilon(T-t,x,y),e^{-\frac{\Delta t}{\epsilon}\IL}D_y^2F(x,y).(h^1,h^2)\rangle.
\end{align*}
Owing to the regularity estimates~\eqref{eq:lemuepsilon-1},~\eqref{eq:lemuepsilon-2} and~\eqref{eq:lemuepsilon-aux} from Lemma~\ref{lem:uepsilon} and its proof, and to the properties of $F$ stated in Assumption~\ref{ass:F}, one has the upper bound
\[
|D_y^2w^{\epsilon,\Delta t}(t,x,y).(h_1,h_2)|\le C(T)(1+|x|+|y|)|h^1||h^2|.
\]
Using the bound~\eqref{eq:DsY} for the Malliavin derivative $\mathcal{D}_r^{e_j}\tilde{\YY}^{\epsilon,\Delta t}(s)$ (see the proof of the inequality~\eqref{eq:ineqen2}) and the moment bounds~\eqref{eq:boundXnepsilonDeltat} from Lemma~\ref{lem:momentboundsscheme-epsilonDeltat} and~\eqref{eq:boundtildeYYkappa}, one obtains
\begin{align*}
|e_n^{0,4,3,2,2,2}|&\le \frac{C(T)}{\epsilon^2}(1+|x_0|)\int_{t_n}^{t_{n+1}}\int_{t}^{t_n}\int_0^s\sum_{j\in\N}\lambda_{\frac{\Delta t}{\epsilon},j}e^{-\frac{s-r}{\epsilon}\lambda_{\frac{\Delta t}{\epsilon},j}}q_{\frac{\Delta t}{\epsilon},j}drdsdt\\
&\le \frac{C(T)}{\epsilon}(1+|x_0|)\int_{t_n}^{t_{n+1}}\int_{t}^{t_n}\sum_{j\in\N}q_{\frac{\Delta t}{\epsilon},j}dsdt\\
&\le \frac{C(T)\Delta t^2}{\epsilon}(1+|x_0|)\sum_{j\in\N}q_{\frac{\Delta t}{\epsilon},j}.
\end{align*}
Note that, for all $\kappa\in(0,\frac12)$ and all $\tau\in(0,\infty)$, one has
\[
\sum_{j\in\N}q_{\tau,j}=\sum_{j\in\N}\frac{\log(1+\tau\lambda_j)}{\tau\lambda_j}\le C_\kappa\sum_{j\in\N}\frac{(\tau\lambda_j)^{\frac12-\kappa}}{\tau\lambda_j}\le C_\kappa \tau^{-\frac12-\kappa},
\]
using the auxiliary inequality
\[
\underset{z\in(0,\infty)}\sup~\frac{\log(1+z)}{z^{\frac12-\kappa}}<\infty.
\]

Gathering the estimates for the error terms $e_n^{0,4,3,2,1}$, $e_n^{0,4,3,2,2,1}$ and $e_n^{0,4,3,2,2,2}$, one obtains
\begin{align*}
|e_n^{0,4,3,2}|&\le |e_n^{0,4,3,2,1}|+|e_n^{0,4,3,2,2}|\\
&\le |e_n^{0,4,3,2,1}|+|e_n^{0,4,3,2,2,1}|+|e_n^{0,4,3,2,2,2`}|\\
&\le \frac{C_\kappa(T)\Delta t}{\epsilon}(1+|x_0|)\int_{t_n}^{t_{n+1}}\frac{1}{s^{1-\frac{\kappa}{2}}}ds|\IL^{\frac{\kappa}{2}}y_0^\epsilon|+C_\kappa(T)\bigl(\frac{\Delta t}{\epsilon}\bigr)^{\frac12-\kappa}(1+|x_0|).
\end{align*}

For the error term $e_n^{0,4,3,3}$, note that for all $t\in[0,T]$, $x,y\in H$ and $h^1,h^2\in H$, one has
\begin{align*}
D_y^2w^{\epsilon,\Delta t}(t,x,y).(h^1,h^2)&=D_xD_y^2 u^\epsilon(T-t,x,y).\bigl(e^{-\frac{\Delta t}{\epsilon}\IL}F(x,y),h^1,h^2\bigr)\\
&+D_xD_yu^\epsilon(T-t,x,y).\bigl(e^{-\frac{\Delta t}{\epsilon}\IL}D_yF(x,y).h^1,h^2\bigr)\\
&+D_xD_yu^\epsilon(T-t,x,y).\bigl(e^{-\frac{\Delta t}{\epsilon}\IL}D_yF(x,y).h^2,h^1\bigr)\\
&+\langle D_xu^\epsilon(T-t,x,y),e^{-\frac{\Delta t}{\epsilon}\IL}D_y^2F(x,y).(h^1,h^2)\rangle.
\end{align*}
Owing to the regularity estimates~\eqref{eq:lemuepsilon-1},~\eqref{eq:lemuepsilon-2} and~\eqref{eq:lemuepsilon-aux} from Lemma~\ref{lem:uepsilon} and its proof, and to the properties of $F$ stated in Assumption~\ref{ass:F}, one has the upper bound
\[
|D_y^2w^{\epsilon,\Delta t}(t,x,y).(h^1,h^2)|\le C(T)(1+|x|+|y|)|h^1||h^2|.
\]
As a consequence, using the moment bounds~\eqref{eq:boundXnepsilonDeltat} and~\eqref{eq:boundtildeYYkappa}, one obtains
\[
|e_n^{0,4,3,3}|\le \frac{C(T)\Delta t^2}{\epsilon}(1+|x_0|)\sum_{j\in\N}q_{\frac{\Delta t}{\epsilon},j}\le C_\kappa(T)\bigl(\frac{\Delta t}{\epsilon}\bigr)^{\frac12-\kappa}(1+|x_0|).
\]

Finally, gathering the estimates for the error terms $e_n^{0,4,3,1}$, $e_n^{0,4,3,2}$ and $e_n^{0,4,3,3}$, one obtains, for all $n\in\{1,\ldots,N-1\}$
\begin{align*}
|e_n^{0,4,3}|&\le |e_n^{0,4,3,1}|+|e_n^{0,4,3,2}|+|e_n^{0,4,3,3}|\\
&\le \bigl(\frac{\Delta t}{\epsilon}\bigr)^{1-\kappa}\int_{t_n}^{t_{n+1}}\frac{C_\kappa(T)}{(T-s)^{1-\frac{\kappa}{2}}}ds\frac{1}{(n\Delta t)^{1-\kappa}}\bigl(1+|\IL^{\frac{\kappa}{2}}x_0|\bigr)^2\\
&+\frac{C_\kappa(T)\Delta t}{\epsilon}\int_{t_n}^{t_{n+1}}\frac{1}{s^{1-\frac{\kappa}{2}}}ds(1+|\IL^{\frac{\kappa}{2}}x_0|)^2+C_\kappa(T)\bigl(\frac{\Delta t}{\epsilon}\bigr)^{\frac12-\kappa}(1+|x_0|).
\end{align*}

$\bullet$ Error term $e_0^{0,4}$.

Note that, owing to the regularity estimate~\eqref{eq:lemuepsilon-1} from Lemma~\ref{lem:uepsilon} (with $\alpha=0$), for all $t\in[0,T)$ and $x,y\in H$, one has
\begin{align*}
|e_0^{0,4}|&\le \int_{t_0}^{t_{1}}\E[|\langle D_xu^\epsilon(T-t,x_0^{\epsilon,\Delta t},\tilde{\YY}^{\epsilon,\Delta t}(t)),F(\XX_0^{\epsilon,\Delta t},\tilde{\YY}^{\epsilon,\Delta t}(t))\rangle|]\\
&+\Delta t\E[|\langle D_xu^\epsilon(T-t_{1},x_0^\epsilon,\YY^{\epsilon,\Delta t}(t_{1})),F(\XX_0^{\epsilon,\Delta t},\tilde{\YY}^{\epsilon,\Delta t}(t_1))\rangle|]\\
&\le C(T)\Delta t(1+|x_0|),
\end{align*}
using the moment bound~\eqref{eq:boundtildeYYkappa}, the linear growth of $F$ and Assumption~\ref{ass:init}.

$\bullet$ Gathering the estimates for the error terms $e_n^{0,4,1}$, $e_n^{0,4,2}$ and $e_n^{0,4,3}$ and summing for $n\in\{0,\ldots,N-1\}$, the proof of the inequality~\eqref{eq:ineqen03} is thus completed.
\end{proof}

\subsection{Proof of Proposition~\ref{propo:error_scheme-limitingscheme}}\label{sec:proof-UA3}

Before proceeding with the proof, auxiliary tools are required. The statements and the arguments are similar to those in~\cite{BR}. Let us first state and prove an auxiliary lemma about discrete-time Poisson equations. For all $\tau\in(0,\infty)$, let $\bigl(\YY_k^\tau\bigr)_{k\ge 0}$ be defined using the modified Euler scheme from~\cite{B} applied to the stochastic evolution equation $d\YY(s)=-\IL\YY(s)ds+dW(s)$ with time-step size $\tau$: for all $m\ge 0$,
\begin{equation}\label{eq:schemePoisson}
\YY_{m+1}^\tau=\IA_\tau\YY_m^{\tau}+\IB_{\tau,1}\sqrt{\tau}\Gamma_{m,1}+\IB_{\tau,2}\sqrt{\tau}\Gamma_{m,2},
\end{equation}
where the linear operators $\IA_\tau$, $\IB_{\tau,1}$ and $\IB_{\tau,2}$ are given by~\eqref{eq:operators}. Let $\IP_\tau$ denote the associated Markov transition operator: for any bounded and measurable mapping $\phi:H\to\R$ and all $y\in H$,
\[
\IP_\tau\phi(y)=\E_y[\phi(\YY_1^{\tau})]=\E[\phi(\IA_\tau y+\IB_{\tau,1}\sqrt{\tau}\Gamma_{1,1}+\IB_{\tau,2}\sqrt{\tau}\Gamma_{1,2})].
\]
\begin{lemma}\label{lem:Poisson}
Let $\phi:H\to \R$ be a Lipschitz continuous function, which satisfies the centering condition $\int\phi(y)d\nu(y)=0$. For all $\tau\in(0,\infty)$ and all $y\in H$, define
\[
\psi^\tau(y)=\tau\sum_{m=0}^{\infty}\IP_\tau^m\phi(y).
\]
Then $\psi^\tau$ is a solution of the Poisson equation
\begin{equation}\label{eq:Poisson}
\IP_\tau \psi-\psi=-\tau\phi.
\end{equation}
Moreover, there exists $C\in(0,\infty)$, such that for all $\tau\in(0,\infty)$, one has
\begin{equation}\label{eq:boundPoisson}
\underset{y\in H}\sup~\frac{|\psi^{\tau}(y)|}{1+|y|}\le C\max(\tau,1)\underset{y_1,y_2\in H}\sup~\frac{|\phi(y_2)-\phi(y_1)|}{|y_2-y_1|}.
\end{equation}
\end{lemma}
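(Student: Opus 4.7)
The plan is to use a standard coupling argument exploiting the linearity of the modified Euler scheme, combined with the fact that $\nu$ is the invariant distribution of $\IP_\tau$ and the centering condition on $\phi$. Writing $[\phi]_{\rm Lip}=\sup_{y_1\neq y_2}\frac{|\phi(y_2)-\phi(y_1)|}{|y_2-y_1|}$, the argument has three steps: (i) prove geometric contraction of $\IP_\tau^m \phi$ in Lipschitz norm, (ii) use the centering condition to turn the Lipschitz contraction into a pointwise decay estimate on $\IP_\tau^m \phi(y)$, (iii) sum the geometric series.

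For (i), I would couple two copies $\YY_m^{\tau;y_1}$ and $\YY_m^{\tau;y_2}$ of the scheme~\eqref{eq:schemePoisson} driven by the same Gaussian random variables. By linearity, the noise contributions cancel and one obtains the exact identity $\YY_m^{\tau;y_2}-\YY_m^{\tau;y_1}=\IA_\tau^m(y_2-y_1)$. Since $\|\IA_\tau\|_{\mathcal{L}(H)}\le (1+\tau\lambda_1)^{-1}$, this yields
\[
|\IP_\tau^m\phi(y_2)-\IP_\tau^m\phi(y_1)|\le [\phi]_{\rm Lip}(1+\tau\lambda_1)^{-m}|y_2-y_1|.
\]
For (ii), the key point, recalled earlier in the paper (see~\cite[Section~3.3]{B}), is that the modified Euler scheme preserves the invariant Gaussian distribution $\nu=\mathcal{N}(0,\IL^{-1})$: for all $m\ge 0$, $\int \IP_\tau^m\phi\,d\nu=\int\phi\,d\nu=0$. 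Integrating the Lipschitz estimate against $\nu$ gives
\[
|\IP_\tau^m\phi(y)|=\big|\int\bigl(\IP_\tau^m\phi(y)-\IP_\tau^m\phi(z)\bigr)d\nu(z)\big|\le [\phi]_{\rm Lip}(1+\tau\lambda_1)^{-m}\bigl(|y|+\int|z|d\nu(z)\bigr),
\]
with $\int|z|d\nu(z)<\infty$ since $\sum_j \lambda_j^{-1}<\infty$ by Assumption~\ref{ass:L}.

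For (iii), inserting this bound into the series defining $\psi^\tau$ and using $\tau\sum_{m=0}^\infty (1+\tau\lambda_1)^{-m}=\frac{1+\tau\lambda_1}{\lambda_1}\le C\max(\tau,1)$ yields both the absolute convergence of the series and the inequality~\eqref{eq:boundPoisson}. Finally, the Poisson equation~\eqref{eq:Poisson} is verified by a telescoping computation: using the Markov property,
\[
\IP_\tau\psi^\tau(y)=\tau\sum_{m=0}^\infty \IP_\tau^{m+1}\phi(y)=\tau\sum_{m=1}^\infty \IP_\tau^m\phi(y)=\psi^\tau(y)-\tau\phi(y),
\]
which is legitimate thanks to the absolute convergence established in the previous step. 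The main (minor) obstacle is merely bookkeeping: ensuring the $\max(\tau,1)$ factor in~\eqref{eq:boundPoisson} is obtained cleanly by splitting the bound $(1+\tau\lambda_1)/\lambda_1$ into its two natural regimes $\tau\le 1$ and $\tau\ge 1$.
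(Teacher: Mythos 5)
Your proposal is correct and follows essentially the same route as the paper: the paper's bound $|\IP_\tau^m\phi(y)-\int\IP_\tau^m\phi(z)\,d\nu(z)|\le [\phi]_{\rm Lip}\int|\IA_\tau^m(y-z)|\,d\nu(z)$ is exactly your synchronous coupling in disguise (the noise cancels by linearity), and the rest — invariance of $\nu$ plus centering, geometric summation giving $(1+\lambda_1\tau)/\lambda_1\le C\max(\tau,1)$, and the index-shift verification of the Poisson equation — matches the paper, which merely leaves the last step as "straightforward to check."
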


\begin{proof}[Proof of Lemma~\ref{lem:Poisson}]
Observe that for all $\tau\in(0,\infty)$, one has
\[
\tau\sum_{m=0}^{\infty}\frac{1}{(1+\lambda_1\tau)^m}=\frac{\tau}{1-\frac{1}{1+\lambda_1\tau}}=\frac{1+\lambda_1\tau}{\lambda_1}\le C\max(\tau,1).
\]
In addition, using the centering condition on $\phi$ and the fact that the Gaussian distribution $\nu$ is invariant for the modified Euler scheme~\eqref{eq:schemePoisson} for any value of $\tau\in(0,\infty)$, for all $m\ge 0$, one has
\begin{align*}
|\IP_\tau^m\phi(y)|&=\big|\IP_\tau^m\phi(y)-\int\phi d\nu|\\
&=\big|\IP_\tau^m\phi(y)-\int\IP_\tau^m\phi(z)d\nu(z)\big|\\
&\le \underset{y_1,y_2\in H}\sup~\frac{|\phi(y_2)-\phi(y_1)|}{|y_2-y_1|}\int |\IA_\tau^m(y-z)|d\nu(z)\\
&\le \frac{1}{(1+\lambda_1\tau)^m}\underset{y_1,y_2\in H}\sup~\frac{|\phi(y_2)-\phi(y_1)|}{|y_2-y_1|}(\int|z|d\nu(z)+|y|).
\end{align*}
This proves that $\psi^\tau$ is well-defined for all $\tau\in(0,\infty)$. It is then straightforward to check that the identity~\eqref{eq:Poisson} and the inequality~\eqref{eq:boundPoisson}. The proof of Lemma~\ref{lem:Poisson} is thus completed.
\end{proof}

Let $\Delta t=T/N\in(0,\Delta t_0)$. For all $n\in\{0,\ldots,N-1\}$, define the auxiliary function $\phi_n^{\Delta t}:H\times H\to\R$ as follows: for all $x,y\in H$, set
\begin{equation}\label{eq:phi_n}
\phi_n^{\Delta t}(x,y)=\langle D\overline{u}_{N-n-1}^{\Delta t}(\IA_{\Delta t}x),\IA_{\Delta t}\bigl(F(x,y)-\overline{F}(x)\bigr)\rangle.
\end{equation}
Note that the centering condition
\[
\int \phi_n^{\Delta t}(x,\cdot)d\nu=0
\]
is satisfied, owing to the definition~\eqref{eq:Fbar} of $\overline{F}(x)$. Therefore one can define the auxiliary functions $\psi_n^{\Delta t}:H\times H\to\R$ as follows:
\begin{equation}\label{eq:psi_n}
\psi_n^{\Delta t,\epsilon}(x,y)=\tau\sum_{m=0}^{\infty}\E_y[\phi_n^{\Delta t}(x,\YY_m^\tau)],
\end{equation}
using the definition~\eqref{eq:schemePoisson} for the auxiliary scheme with time-step size $\tau=\Delta t/\epsilon$. Owing to Lemma~\ref{lem:Poisson}, $\psi_n^{\Delta t,\epsilon}(x,\cdot)$ is solution of the discrete Poisson equation~\eqref{eq:Poisson}:
\[
\IP_\tau \psi_n^{\Delta t,\epsilon}(x,\cdot)-\psi_n^{\Delta t,\epsilon}(x,\cdot)=-\tau\phi_n^{\Delta t}(x,\cdot).
\]
One has the following regularity estimates on the functions $\psi_n^{\Delta t,\epsilon}(x,\cdot)$, with constants independent of $\Delta t\in(0,\Delta t_0)$ and $\epsilon\in(0,\epsilon_0)$.
\begin{lemma}\label{lem:phin_psin}
For all $T\in(0,\infty)$ and $\kappa\in(0,1]$, there exists $C_\kappa(T)\in(0,\infty)$ such that for all $\epsilon\in(0,\epsilon_0)$, $\Delta t=T/N\in(0,\Delta t_0)$, all $x,y\in H$ and all $n\in\{0,\ldots,N-1\}$, one has
\begin{align}
|\psi_n^{\Delta t,\epsilon}(x,y)|&\le C_1(T)\vvvert\varphi\vvvert_1\max(\tau,1)(1+|y|)\label{eq:psin-1}\\
\underset{h\in H;|h|\le 1}\sup~|\langle D_x\psi_n^{\Delta t,\epsilon}(x,y),h\rangle|&\le C_1(T)\vvvert\varphi\vvvert_2\max(\tau,1)(1+|y|)\label{eq:psin-2}
\end{align}
and for all $n\in\{0,\ldots,N-3\}$, one has
\begin{equation}\label{eq:psin-3}
|\psi_{n+1}^{\Delta t,\epsilon}(x,y)-\psi_n^{\Delta t,\epsilon}(x,y)|\le \frac{C_\kappa(T)\Delta t^{1-\kappa}}{\bigl((N-n-2)\Delta t\bigr)^{1-\kappa}}\vvvert\varphi\vvvert_2\max(\tau,1)(1+|x|)(1+|y|),
\end{equation}
with $\tau=\Delta t/\epsilon$.
\end{lemma}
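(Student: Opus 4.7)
\medskip

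\noindent\textbf{Proof plan for Lemma~\ref{lem:phin_psin}.}

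The strategy is to reduce each of the three estimates to an application of Lemma~\ref{lem:Poisson} to a suitable centered function of $y$, and then to bound its Lipschitz constant (uniformly in $x$ and in $\Delta t$) using the regularity estimates of Lemma~\ref{lem:ubarDeltat} for the auxiliary mappings $\overline{u}_n^{\Delta t}$. The centering condition $\int \phi_n^{\Delta t}(x,\cdot)d\nu=0$ follows directly from the definition~\eqref{eq:Fbar} of $\overline{F}(x)$, and the same will hold for $D_x\phi_n^{\Delta t}$ and for $\phi_{n+1}^{\Delta t}-\phi_n^{\Delta t}$, so that $\psi_n^{\Delta t,\epsilon}(x,\cdot)$, $D_x\psi_n^{\Delta t,\epsilon}(x,\cdot)$, and $\psi_{n+1}^{\Delta t,\epsilon}(x,\cdot)-\psi_n^{\Delta t,\epsilon}(x,\cdot)$ can each be identified with the solution of a discrete Poisson equation of the form~\eqref{eq:Poisson} and estimated via~\eqref{eq:boundPoisson}.

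For~\eqref{eq:psin-1}, I compute $\phi_n^{\Delta t}(x,y_1)-\phi_n^{\Delta t}(x,y_2)=\langle D\overline{u}_{N-n-1}^{\Delta t}(\IA_{\Delta t}x),\IA_{\Delta t}(F(x,y_1)-F(x,y_2))\rangle$. Using $\vvvert\overline{u}_{N-n-1}^{\Delta t}\vvvert_1\le C(T)\vvvert\varphi\vvvert_1$ (the inequality~\eqref{eq:lemubarDeltat_4} with $\kappa=1$), the global Lipschitz continuity of $F$ in $y$, and $\|\IA_{\Delta t}\|_{\mathcal{L}(H)}\le 1$, the Lipschitz constant of $\phi_n^{\Delta t}(x,\cdot)$ is bounded by $C(T)\vvvert\varphi\vvvert_1$. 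Lemma~\ref{lem:Poisson} then gives~\eqref{eq:psin-1}. For~\eqref{eq:psin-2}, since $\bigl(\YY_m^\tau\bigr)$ does not depend on $x$, one may differentiate $\psi_n^{\Delta t,\epsilon}(x,y)=\tau\sum_{m\ge 0}\E_y[\phi_n^{\Delta t}(x,\YY_m^\tau)]$ termwise in $x$. Expanding $D_x\phi_n^{\Delta t}(x,y).h$ by the chain rule produces one term with a second derivative of $\overline{u}_{N-n-1}^{\Delta t}$ times $\IA_{\Delta t}(F(x,y)-\overline{F}(x))$ and one term with $D\overline{u}_{N-n-1}^{\Delta t}$ times $\IA_{\Delta t}(D_xF(x,y).h-D\overline{F}(x).h)$. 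Both are centered in $y$ (differentiate in $x$ inside the integral defining $\overline{F}$), and bounding the $y$-Lipschitz constant of $D_x\phi_n^{\Delta t}(x,\cdot)$ uses only $\vvvert\overline{u}_{N-n-1}^{\Delta t}\vvvert_2\le C(T)\vvvert\varphi\vvvert_2$ from~\eqref{eq:lemubarDeltat_4} and the boundedness of $D_xF$, $D_yF$, $D_xD_yF$ (Assumption~\ref{ass:F}). Applying Lemma~\ref{lem:Poisson} to the centered function $D_x\phi_n^{\Delta t}(x,\cdot).h$ yields~\eqref{eq:psin-2}.

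For~\eqref{eq:psin-3}, I write
\[
\phi_{n+1}^{\Delta t}(x,y)-\phi_n^{\Delta t}(x,y)=\langle D\overline{u}_{N-n-2}^{\Delta t}(\IA_{\Delta t}x)-D\overline{u}_{N-n-1}^{\Delta t}(\IA_{\Delta t}x),\IA_{\Delta t}(F(x,y)-\overline{F}(x))\rangle,
\]
which is again centered in $y$. The key input is the index-shift estimate~\eqref{eq:lemubarDeltat_3} applied with $N-n-2$ in place of $n$ (requiring $n\le N-3$): testing against $k=\IA_{\Delta t}(F(x,y_1)-F(x,y_2))$ and using $|\IA_{\Delta t}x|\le |x|$ together with the Lipschitz continuity of $F$ in $y$ gives
\[
\big|\bigl(\phi_{n+1}^{\Delta t}-\phi_n^{\Delta t}\bigr)(x,y_1)-\bigl(\phi_{n+1}^{\Delta t}-\phi_n^{\Delta t}\bigr)(x,y_2)\big|\le \frac{C_\kappa(T)\Delta t^{1-\kappa}}{((N-n-2)\Delta t)^{1-\kappa}}\vvvert\varphi\vvvert_2(1+|x|)|y_1-y_2|.
\]
Inserting this Lipschitz constant into~\eqref{eq:boundPoisson} yields~\eqref{eq:psin-3}.

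The main obstacle is purely bookkeeping for~\eqref{eq:psin-3}: one must correctly identify the shift in indices so that~\eqref{eq:lemubarDeltat_3} applies with a positive denominator and produces the factor $\Delta t^{1-\kappa}/\bigl((N-n-2)\Delta t\bigr)^{1-\kappa}$; the analytic content is entirely contained in Lemmas~\ref{lem:Poisson} and~\ref{lem:ubarDeltat}. The restriction $n\le N-3$ reflects exactly this shift and the fact that~\eqref{eq:lemubarDeltat_3} requires the first index to be at least $1$.
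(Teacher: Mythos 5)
Your proposal is correct and follows essentially the same route as the paper: verify the centering condition for $\phi_n^{\Delta t}(x,\cdot)$, $\langle D_x\phi_n^{\Delta t}(x,\cdot),h\rangle$ and $\phi_{n+1}^{\Delta t}(x,\cdot)-\phi_n^{\Delta t}(x,\cdot)$, bound their Lipschitz constants in $y$ via Lemma~\ref{lem:ubarDeltat} (in particular~\eqref{eq:lemubarDeltat_4} for the first two and~\eqref{eq:lemubarDeltat_3} with the shifted index $N-n-2$ for the third), and conclude by applying the Poisson-equation bound~\eqref{eq:boundPoisson} of Lemma~\ref{lem:Poisson}. The index bookkeeping for~\eqref{eq:psin-3}, including the restriction $n\le N-3$, matches the paper's argument.
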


The proof of Lemma~\ref{lem:phin_psin} consists in the application of Lemma~\ref{lem:Poisson} for three auxiliary mappings, combined with the regularity results on $\overline{u}_{N-n-1}^{\Delta t}$ from Lemma~\ref{lem:ubarDeltat}. The application of Lemma~\ref{lem:Poisson} explains the presence of the factor $\max(\tau,1)$ on the right-hand sides of the inequalities, see the inequality~\eqref{eq:boundPoisson} from Lemma~\ref{lem:Poisson}.

\begin{proof}[Proof of Lemma~\ref{lem:momentboundscheme}]
Let us first prove the inequality~\eqref{eq:psin-1}. For all $n\in\{0,\ldots,N-1\}$, $x,y_1,y_2\in H$, one has
\begin{align*}
|\phi_n^{\Delta t}(x,y_2)-\phi_n^{\Delta t}(x,y_1)|&=|\langle D\overline{u}_{N-n-1}^{\Delta t}(\IA_{\Delta t}x),\IA_{\Delta t}(F(x,y_2)-F(x,y_1))\rangle|\\
&\le C(T)\vvvert\varphi\vvvert_1|F(x,y_2)-F(x,y_1)|\\
&\le C(T)\vvvert\varphi\vvvert_1|y_2-y_1|,
\end{align*}
owing to the inequality~\eqref{eq:lemubarDeltat_1} (see Lemma~\ref{lem:ubarDeltat}) and to the global Lipschitz continuity of $F$ (Assumption~\ref{ass:F}). Since $\phi_n^{\Delta t}(x,\cdot)$ satisfies the centering condition $\int\phi_n^{\Delta t}(x,\cdot)d\nu=0$, the inequality~\eqref{eq:psin-1} is then a straightforward consequence of Lemma~\ref{lem:Poisson}. 

Let us now prove the inequality~\eqref{eq:psin-2}. Since the mappings $\overline{u}_{N-n-1}^{\Delta t}$, $F$ and $\overline{F}$ are of class $\mathcal{C}^2$ (see Lemma~\ref{lem:ubarDeltat} and Assumption~\ref{ass:F}), $x\mapsto \phi_n^{\Delta t}(x,y)$ is of class $\mathcal{C}^1$, and one has
\begin{align*}
\langle D_x\phi_n^{\Delta t}(x,y),h\rangle&=\langle D\overline{u}_{N-n-1}^{\Delta t}(\IA_{\Delta t}x),\IA_{\Delta t}\bigl(D_xF(x,y).h-D\overline{F}(x).h\bigr)\rangle\\
&+D^2\overline{u}_{N-n-1}^{\Delta t}(\IA_{\Delta t}x).\bigl(\IA_{\Delta t}(F(x,y)-\overline{F}(x)),\IA_{\Delta t}h\bigr).
\end{align*}
In particular, the centering condition
\[
\int\langle D_x\phi_n^{\Delta t}(x,\cdot),h\rangle d\nu=0
\]
is satisfied. It is straightforward to check that $x\mapsto \psi_n^{\Delta t,\epsilon}(x,y)$ is of class $\mathcal{C}^1$, and that one has
\[
\langle D_x\psi_n^{\Delta t,\epsilon}(x,y),h\rangle=
\tau\sum_{k=0}^{\infty}\E_y[\langle D_x\phi_{n}^{\Delta t}(x,\YY_{k}^{\tau}),h\rangle.
\]
This means that the mapping $\langle D_x\psi_{n}^{\Delta t,\epsilon}(x,\cdot),h\rangle$ solves the Poisson equation
\[
(\IP_\tau-I)\langle D_x\psi_{n}^{\Delta t,\epsilon}(x,\cdot),h\rangle=\langle D_x\phi_{n}^{\Delta t}(x,\cdot),h\rangle.
\]
In order to apply Lemma~\ref{lem:Poisson}, it suffices to check tha the following property holds: for all $n\in\{0,\ldots,N-1\}$, $x,y_1,y_2\in H$, one has
\begin{align*}
\big|\langle D_x\phi_{n+1}^{\Delta t}\tau(x,y_2),h\rangle&-\langle D_x\psi_{n+1}^{\Delta t}(x,y_1),h\rangle\big|\\
&\le \big|\langle D\overline{u}_{N-n-2}^{\Delta t}(\IA_{\Delta t}x),\IA_{\Delta t}(D_xF(x,y_2).h-D_xF(x,y_2).h)\rangle|\\
&+\big|D^2\overline{u}_{N-n-2}^{\Delta t}(\IA_{\Delta t}x).\bigl(\IA_{\Delta t}h,\IA_{\Delta t}(F(x,y_2)-F(x,y_1))\bigr)\big|\\
&\le C(T)\vvvert\varphi\vvvert_2|h||y_2-y_1|,
\end{align*}
owing to the inequality~\eqref{eq:lemubarDeltat_4} (see Lemma~\ref{lem:ubarDeltat}) and to the regularity conditions on $F$ ($F$ is of class $\mathcal{C}^2$ with bounded first and second order derivatives, see Assumption~\ref{ass:F}). As a consequence, the inequality~\eqref{eq:psin-2} is obtained as an application of Lemma~\ref{lem:Poisson}.

It finally remains to prove the inequality~\eqref{eq:psin-3}. Set $\delta\psi_n^{\Delta t,\epsilon}(x,y)=\psi_{n+1}^{\Delta t,\epsilon}(x,y)-\psi_{n}^{\Delta t,\epsilon}(x,y)$. The mapping $\delta\psi_n^{\Delta t,\epsilon}(x,\cdot)$ is solution of the Poisson equation
\[
(\IP_\tau-I)\delta\psi_n^{\Delta t,\epsilon}(x,\cdot)=\tau\delta\phi_n^{\Delta t}(x,\cdot),
\]
with the auxiliary function $\delta\phi_n^{\Delta t}(x,\cdot)$ defined by
\[
\delta\phi_n^{\Delta t}(x,y)=\langle D\overline{u}_{N-n-2}^{\Delta t}(\IA_{\Delta t}x)-D\overline{u}_{N-n-1}^{\Delta t}(\IA_{\Delta t}x),\IA_{\Delta t}(F(x,y)-\overline{F}(x))\rangle.
\]
The centering condition
\[
\int\delta\phi_n^{\Delta t}(x,\cdot)d\nu=0
\]
is satisfied, therefore the application of Lemma~\ref{lem:Poisson} requires to upper bound the Lipschitz constant of $\delta\phi_n^{\Delta t}(x,\cdot)$.

For all $x\in H$, $n\in\{0,\ldots,N-3\}$, and $y_1,y_2\in H$, one has
\begin{align*}
\big|\delta_n\phi^{\tau}(x,y_2)-&\delta_n\phi^{\tau}(x,y_1)\big|\le \big|\langle D\overline{u}_{N-n-2}^{\Delta t}(\IA_{\Delta t}x)-D\overline{u}_{N-n-1}^{\Delta t}(\IA_{\Delta t}x),\IA_{\Delta t}(F(x,y_2)-F(x,y_1))\rangle\big|\\
&\le \Delta t^{1-\kappa}\frac{C_\kappa(T)}{\bigl((N-n-2)\Delta t\bigr)^{1-\kappa}}\vvvert\varphi\vvvert_2(1+|x|)\big|\IA_{\Delta t}(F(x,y_2)-F(x,y_1))\big|\\
&\le \Delta t^{1-\kappa}\frac{C_\kappa(T)}{\bigl((N-n-2)\Delta t\bigr)^{1-\kappa}}\vvvert\varphi\vvvert_2(1+|x|)|y_2-y_1|,
\end{align*}
owing to the inequality~\eqref{eq:lemubarDeltat_3} (see Lemma~\ref{lem:ubarDeltat}) and to the global Lipschitz continuity of $F$ (Assumption~\ref{ass:F}). Applying Lemma~\ref{lem:Poisson} then yields the inequality~\eqref{eq:psin-3}.

The proof of Lemma~\ref{lem:phin_psin} is thus completed.
\end{proof}

We are now in position to provide the proof of Proposition~\ref{propo:error_scheme-limitingscheme}.

\begin{proof}[Proof of Proposition~\ref{propo:error_scheme-limitingscheme}]
Let $T\in(0,\infty)$, $\varphi:H\to\R$ be of class $\mathcal{C}^2$, with bounded first and second order derivatives, $\epsilon\in(0,\epsilon_0)$, and $\Delta t=T/N\in(0,\Delta t_0)$, with $N\in\N$. Recall the notation $\tau=\Delta t/\epsilon$.

The error in the left-hand side of~\eqref{eq:error_scheme-limitingscheme} can written as follows:
\[
\E[\varphi(\XX_N^{\epsilon,\Delta t})]-\E[\varphi(\XX_N^{\Delta t})]=\E[\varphi(\XX_N^{\epsilon,\Delta t})]-\varphi(\overline{\XX}_N^{\Delta t})+\varphi(\overline{\XX}_N^{\Delta t})-\E[\varphi(\XX_N^{\Delta t})].
\]
It suffices to focus on the first error term on the left-hand side: indeed
\[
\big|\varphi(\overline{\XX}_N^{\Delta t})-\E[\varphi(\XX_N^{\Delta t})]\big|\le C(T)\Delta t\vvvert\varphi\vvvert_2(1+|x_0|^2),
\]
see~\eqref{eq:errorXbarX} from the proof of Proposition~\ref{propo:error_limitingscheme-averagedequation}.

The error term which remain to be studied can be decomposed as follows:
\begin{align*}
\E[\varphi(\XX_N^{\epsilon,\Delta t})]-\varphi(\overline{\XX}_N^{\Delta t})&=\E[\overline{u}_0^{\Delta t}(\XX_N^{\epsilon,\Delta t})]-\overline{u}_N^{\Delta t}(x_0)\\
&=\E[\overline{u}_0^{\Delta t}(\XX_N^{\epsilon,\Delta t})]-\E[\overline{u}_N^{\Delta t}(\XX_0^{\epsilon,\Delta t})]+\overline{u}_N^{\Delta t}(x_0^\epsilon)-\overline{u}_N^{\Delta t}(x_0),
\end{align*}
where the mapping $\overline{u}_{N}^{\Delta t}$ is given by~\eqref{eq:ubar}.

The mapping $\overline{u}_{N}^{\Delta t}$ is globally Lipschitz continuous, more precisely one has the inequality~\eqref{eq:lemubarDeltat_4}, and using Assumption~\ref{ass:init} one obtains
\[
\big|\overline{u}_N^{\Delta t}(x_0^\epsilon)-\overline{u}_N^{\Delta t}(x_0)\big|\le C(T)|x_0^\epsilon-x_0|\le C(T)\epsilon(1+|x_0|).
\]
Let us now study the remaining error term. Using a telescoping sum argument, one has
\begin{align*}
\E[\overline{u}_0^{\Delta t}&(\XX_N^{\epsilon,\Delta t})]-\E[\overline{u}_N^{\Delta t}(\XX_0^{\epsilon,\Delta t})]=\sum_{n=0}^{N-1}\bigl(\E[\overline{u}_{N-n-1}^{\Delta t}(\XX_{n+1}^{\epsilon,\Delta t})]-\E[\overline{u}_{N-n}^{\Delta t}(\XX_n^{\epsilon,\Delta t})]\bigr)\\
&=\sum_{n=0}^{N-1}\Bigl(\E[\overline{u}_{N-n-1}^{\Delta t}(\IA_{\Delta t}\XX_n^{\epsilon,\Delta t}+\Delta t\IA_{\Delta t}F(\XX_n^{\epsilon,\Delta t},\YY_{n+1}^{\epsilon,\Delta t}))]\\
&\hspace{2cm}-\E[\overline{u}_{N-n-1}^{\Delta t}(\IA_{\Delta t}\XX_n^{\epsilon,\Delta t}+\Delta t\overline{F}(\XX_n^{\epsilon,\Delta t}))]\Bigr).
\end{align*}

Since $\overline{u}_n^{\Delta t}$ is of class $\mathcal{C}^2$ owing to Lemma~\ref{lem:ubarDeltat}, by a Taylor expansion, one obtains the equality
\[
\E[\overline{u}_0^{\Delta t}(\XX_N^{\epsilon,\Delta t})]-\E[\overline{u}_N^{\Delta t}(\XX_N^{\epsilon,\Delta t})]=\Delta t\sum_{n=0}^{N-1}\E[\phi_{n}^{\Delta t}(\XX_n^{\epsilon,\Delta t},\YY_{n+1}^{\epsilon,\Delta t})]+R_n^{\epsilon,\Delta t},
\]
with the function $\phi_n^{\Delta t}$ defined by~\eqref{eq:phi_n}, and where one has
\begin{align*}
\E[|R_n^{\epsilon,\Delta t}|]&\le \Delta t^2\vvvert\overline{u}_{N-n-1}^{\Delta t}\vvvert_2\E[|F(\XX_n^{\epsilon,\Delta t},\YY_{n+1}^{\epsilon,\Delta t})-\overline{F}(\XX_n^{\epsilon,\Delta t})|^2]\\
&\le C(T)\Delta t^2\vvvert\varphi\vvvert_2\bigl(1+\E[|\XX_n^{\epsilon,\Delta t}|^2]+\E[|\YY_{n+1}^{\epsilon,\Delta t}|^2]\bigr),
\end{align*}
using the inequality~\eqref{eq:lemubarDeltat_4}.

Using the equality $\Delta t=\tau\epsilon$, the error term can be written as
\begin{equation}\label{eq:expressionerror}
\begin{aligned}
\Delta t\sum_{n=0}^{N-1}\E[\phi_{n}^{\Delta t}(\XX_n^{\epsilon,\Delta t},\YY_{n+1}^{\epsilon,\Delta t})]&=\epsilon\sum_{n=0}^{N-1}\bigl(\E[\psi_{n}^{\Delta t,\epsilon}(\XX_n^{\epsilon,\Delta t},\YY_{n+1}^{\epsilon,\Delta t})]-\E[\IP_\tau\psi_{n}^{\Delta t,\epsilon}(\XX_n^{\epsilon,\Delta t},\YY_{n+1}^{\epsilon,\Delta t})]\bigr)\\
&=\epsilon\sum_{n=0}^{N-1}\bigl(\E[\psi_{n}^{\Delta t,\epsilon}(\XX_n^{\epsilon,\Delta t},\YY_{n+1}^{\epsilon,\Delta t})]-\E[\psi_{n}^{\Delta t,\epsilon}(\XX_n^{\epsilon,\Delta t},\YY_{n+2}^{\epsilon,\Delta t})]\bigr)\\
&=\epsilon\sum_{n\in\{0,N-1,N-2\}}\bigl(\E[\psi_{n}^{\Delta t,\epsilon}(\XX_n^{\epsilon,\Delta t},\YY_{n+1}^{\epsilon,\Delta t})]-\E[\psi_{n}^{\Delta t,\epsilon}(\XX_n^{\epsilon,\Delta t},\YY_{n+2}^{\epsilon,\Delta t})]\bigr)\\
&+\epsilon\sum_{n=1}^{N-3}\bigl(\E[\psi_{n}^{\Delta t,\epsilon}(\XX_n^{\epsilon,\Delta t},\YY_{n+1}^{\epsilon,\Delta t})]-\E[\psi_{n}^{\Delta t,\epsilon}(\XX_n^{\epsilon,\Delta t},\YY_{n+2}^{\epsilon,\Delta t})]\bigr)
\end{aligned}
\end{equation}
where the second equality is a consequence of the Markov property and of the definition of the scheme~\eqref{eq:scheme}. For technical reasons, it is necessary to treat differently the terms $n\in\{0,N-2,N-1\}$ and $n\in\{1,\ldots,N-3\}$.

On the one hand, for if $n\in\{0,N-2,N-1\}$, owing to Lemma~\ref{lem:phin_psin} one has
\begin{align*}
\epsilon\Big|\E[\psi_{n}^{\Delta t,\epsilon}(\XX_n^{\epsilon,\Delta t},\YY_{n+1}^{\epsilon,\Delta t})]&-\E[\psi_{n}^{\Delta t,\epsilon}(\XX_n^{\epsilon,\Delta t},\YY_{n+2}^{\epsilon,\Delta t})]\Big|\\
&\le C(T)\vvvert\varphi\vvvert_1\epsilon\max(\tau,1)\bigl(1+\E[|\YY_{n+1}^{\epsilon,\Delta t}|]+\E[|\YY_{n+2}^{\epsilon,\Delta t}|]\bigr)\\
&\le C(T)\vvvert\varphi\vvvert_1\max(\Delta t,\epsilon),
\end{align*}
using the moment bound~\eqref{eq:boundYnepsilonDeltat} from Lemma~\ref{lem:momentboundsscheme-epsilonDeltat}.

On the other hand, using a telescoping sum argument and the Markov property, one obtains the auxiliary identities
\begin{align*}
\E[\IP_\tau\psi_{N-2}^{\Delta t,\epsilon}(\XX_{N-2}^{\epsilon,\Delta t},\YY_{N-2}^{\epsilon,\Delta t})]&-\E[\IP_\tau\psi_1^{\Delta t,\epsilon}(\XX_1^{\epsilon,\Delta t},\YY_1^{\epsilon,\Delta t})]\\
&=\sum_{n=1}^{N-3}\bigl(\E[\IP_\tau\psi_{n+1}^{\Delta t,\epsilon}(\XX_{n+1}^{\epsilon,\Delta t},\YY_{n+1}^{\epsilon,\Delta t})]-\E[\IP_\tau\psi_n^{\Delta t,\epsilon}(\XX_n^{\epsilon,\Delta t},\YY_n^{\epsilon,\Delta t})]\bigr)\\
&=\sum_{n=1}^{N-3}\bigl(\E[\IP_\tau\psi_{n+1}^{\Delta t,\epsilon}(\XX_{n+1}^{\epsilon,\Delta t},\YY_{n+1}^{\epsilon,\Delta t})]-\E[\IP_\tau\psi_{n}^{\Delta t,\epsilon}(\XX_{n}^{\epsilon,\Delta t},\YY_{n+1}^{\epsilon,\Delta t})]\\
&+\sum_{n=1}^{N-3}\bigl(\E[\IP_\tau\psi_{n}^{\Delta t,\epsilon}(\XX_{n}^{\epsilon,\Delta t},\YY_{n+1}^{\epsilon,\Delta t})]-\E[\IP_\tau\psi_{n}^{\Delta t,\epsilon}(\XX_{n}^{\epsilon,\Delta t},\YY_{n}^{\epsilon,\Delta t})]\bigr)\\
&=\sum_{n=1}^{N-3}\bigl(\E[\psi_{n+1}^{\Delta t,\epsilon}(\XX_{n+1}^{\epsilon,\Delta t},\YY_{n+2}^{\epsilon,\Delta t})]-\E[\psi_{n}^{\Delta t,\epsilon}(\XX_{n}^{\epsilon,\Delta t},\YY_{n+2}^{\epsilon,\Delta t})]\\
&+\sum_{n=1}^{N-3}\bigl(\E[\psi_{n}^{\Delta t,\epsilon}(\XX_{n}^{\epsilon,\Delta t},\YY_{n+2}^{\epsilon,\Delta t})]-\E[\psi_{n}^{\Delta t,\epsilon}(\XX_{n}^{\epsilon,\Delta t},\YY_{n+1}^{\epsilon,\Delta t})]\bigr).
\end{align*}
Observe that the expression appearing in the last line above corresponds to the expression appearing in the last line of~\eqref{eq:expressionerror}. One then obtains
\begin{align*}
\Delta t\sum_{n=1}^{N-3}\E[\phi_{n}^{\Delta t}(\XX_n^{\epsilon,\Delta t},\YY_{n+1}^{\epsilon,\Delta t})]&=\epsilon \E[\psi_{1}^{\Delta t,\epsilon}(\XX_1^\epsilon,\YY_2^{\epsilon,\Delta t})]-\epsilon\E[\psi_{N-2}^{\Delta t,\epsilon}(\XX_{N-2}^{\epsilon,\Delta t},\YY_{N-1}^{\epsilon,\Delta t})]\\
&+\epsilon\sum_{n=1}^{N-3}\bigl(\E[\psi_{n+1}^{\Delta t,\epsilon}(\XX_{n+1}^{\epsilon,\Delta t},\YY_{n+2}^{\epsilon,\Delta t})]-\E[\psi_{n+1}^{\Delta t,\epsilon}(\XX_n^{\epsilon,\Delta t},\YY_{n+2}^{\epsilon,\Delta t})]\bigr)\\
&+\epsilon\sum_{n=1}^{N-3}\bigl(\E[\psi_{n+1}^{\Delta t,\epsilon}(\XX_{n}^{\epsilon,\Delta t},\YY_{n+2}^{\epsilon,\Delta t})]-\E[\psi_{n}^{\Delta t,\epsilon}(\XX_n^{\epsilon,\Delta t},\YY_{n+2}^{\epsilon,\Delta t})]\bigr).
\end{align*}
To prove upper bounds for the three terms on the right-hand side above, the properties of the mappings $\psi_n^{\Delta t,\epsilon}$ provided by Lemma~\ref{lem:phin_psin} and moment bounds for the random variables $\XX_n^{\Delta t,\epsilon}$ and $\YY_{n}^{\Delta t,\epsilon}$ are employed. Recall also that $\epsilon\max(\tau,1)=\max(\Delta t,\epsilon)$.

$\bullet$ Using the inequality~\eqref{eq:psin-1}, one has
\begin{align*}
\epsilon\big|\E[\psi_{1}^{\Delta t,\epsilon}(\XX_1^\epsilon,\YY_2^{\epsilon,\Delta t})]\big|&\le C(T)\max(\Delta t,\epsilon)\vvvert\varphi\vvvert_1(1+\E[|\YY_2^{\epsilon,\Delta t}|])\\
&\le C(T)\max(\Delta t,\epsilon)\vvvert\varphi\vvvert_1,
\end{align*}
using the moment bound~\eqref{eq:boundYnepsilonDeltat} from Lemma~\ref{lem:momentboundsscheme-epsilonDeltat}.

Similarly, one has
\begin{align*}
\epsilon\big|\E[\psi_{N-2}^{\Delta t,\epsilon}(\XX_{N-2}^{\epsilon,\Delta t},\YY_{N-1}^{\epsilon,\Delta t})]\big|&\le C(T)\max(\Delta t,\epsilon)\vvvert\varphi\vvvert_1(1+\E[|\YY_{N-1}^{\epsilon,\Delta t}|])\\
&\le C(T)\max(\Delta t,\epsilon)\vvvert\varphi\vvvert_1.
\end{align*}

$\bullet$ Using the inequality~\eqref{eq:psin-2}, one has
\begin{align*}
\Big|\epsilon\sum_{n=1}^{N-3}\bigl(\E[\psi_{n+1}^{\Delta t,\epsilon}(\XX_{n+1}^{\epsilon,\Delta t},\YY_{n+2}^{\epsilon,\Delta t})]&-\E[\psi_{n+1}^{\Delta t,\epsilon}(\XX_n^{\epsilon,\Delta t},\YY_{n+2}^{\epsilon,\Delta t})]\bigr)\Big|\\
&\le C(T)\max(\Delta t,\epsilon)\vvvert\varphi\vvvert_2\sum_{n=1}^{N-3}\E\bigl[|\XX_{n+1}^{\epsilon,\Delta t}-\XX_n^{\epsilon,\Delta t}|(1+|\YY_{n+2}^{\epsilon,\Delta t}|)\bigr]\\
&\le C(T)\max(\Delta t,\epsilon)\vvvert\varphi\vvvert_2\sum_{n=1}^{N-3}\bigl(\E[|\XX_{n+1}^{\epsilon,\Delta t}-\XX_n^{\epsilon,\Delta t}|^2]\bigr)^{\frac12},
\end{align*}
using H\"older's inequality and the moment bound~\eqref{eq:boundYnepsilonDeltat} from Lemma~\ref{lem:momentboundsscheme-epsilonDeltat}. Applying Lemma~\ref{lem:incrementsscheme} then yields
\begin{align*}
\Big|\epsilon\sum_{n=1}^{N-3}\bigl(\E[\psi_{n+1}^{\Delta t,\epsilon}(\XX_{n+1}^{\epsilon,\Delta t},\YY_{n+2}^{\epsilon,\Delta t})]&-\E[\psi_{n+1}^{\Delta t,\epsilon}(\XX_n^{\epsilon,\Delta t},\YY_{n+2}^{\epsilon,\Delta t})]\bigr)\Big|\\
&\le C_\kappa(T)\max(\Delta t,\epsilon)\vvvert\varphi\vvvert_2\Delta t^{1-\kappa}(1+|x_0|)\sum_{n=1}^{N-3}\frac{1}{(n\Delta t)^{1-\kappa}}\\
&\le C_\kappa(T)\max(\Delta t,\epsilon)\vvvert\varphi\vvvert_2\Delta t^{-\kappa}(1+|x_0|).
\end{align*}

$\bullet$ Using the inequality~\eqref{eq:psin-3}, one has
\begin{align*}
\Big|\epsilon&\sum_{n=1}^{N-3}\bigl(\E[\psi_{n+1}^{\Delta t,\epsilon}(\XX_{n}^{\epsilon,\Delta t},\YY_{n+2}^{\epsilon,\Delta t})]-\E[\psi_{n}^{\Delta t,\epsilon}(\XX_n^{\epsilon,\Delta t},\YY_{n+2}^{\epsilon,\Delta t})]\bigr)\Big|\\
&\le C_\kappa(T)\max(\Delta t,\epsilon)\vvvert\varphi\vvvert_2\Delta t^{1-\kappa} \sum_{n=1}^{N-3}\frac{1}{\bigl((N-n-2)\Delta t\bigr)^{1-\kappa}}\E[(1+|\XX_n^{\epsilon,\Delta t}|)(1+|\YY_{n+2}^{\epsilon,\Delta t}|)]\\
&\le C_\kappa(T)\max(\Delta t,\epsilon)\vvvert\varphi\vvvert_2\Delta t^{-\kappa}(1+|x_0|).
\end{align*}

$\bullet$ Gathering the estimates, one obtains
\[
\big|\Delta t\sum_{n=1}^{N-3}\E[\phi_{n}^{\Delta t}(\XX_n^{\epsilon,\Delta t},\YY_{n+1}^{\epsilon,\Delta t})]\big|\le C_\kappa(T)\max(\Delta t,\epsilon)\vvvert\varphi\vvvert_2\Delta t^{-\kappa}(1+|x_0|),
\]
and finally
\begin{align*}
\big|\E[\varphi(\XX_N^{\epsilon,\Delta t})]-\varphi(\overline{\XX}_N^{\Delta t})\big|&\le C(T)\epsilon\vvvert\varphi\vvvert_1(1+|x_0|)+C(T)\Delta t\vvvert\varphi\vvvert_2(1+|x_0|^2)\\
&+C(T)\vvvert\varphi\vvvert_1(\epsilon+\Delta t)\\
&+C_\kappa(T)\bigl(\frac{\epsilon}{\Delta t^\kappa}+\Delta t^{1-\kappa}\bigr)\vvvert\varphi\vvvert_2(1+|x_0|).
\end{align*}
This concludes the proof of the inequality~\eqref{eq:error_scheme-limitingscheme} and of Proposition~\ref{propo:error_scheme-limitingscheme}.
\end{proof}

\section*{Acknowledgments}
This work is partially supported by the following projects operated by the French National Research Agency: ADA (ANR-19-CE40-0019-02) and SIMALIN (ANR-19-CE40-0016).


\end{document}